\documentclass[11pt]{amsart}
\usepackage{amssymb,latexsym,amsmath,epsfig,amsthm}
    \usepackage{float}

\makeatletter

\renewcommand\section{\@startsection {section}{1}{\z@}
{-30pt \@plus -1ex \@minus -.2ex}
{2.3ex \@plus.2ex}
{\normalfont\normalsize\bfseries\boldmath}}

\renewcommand\subsection{\@startsection{subsection}{2}{\z@}
{-3.25ex\@plus -1ex \@minus -.2ex}
{1.5ex \@plus .2ex}
{\normalfont\normalsize\bfseries\boldmath}}

\renewcommand{\@seccntformat}[1]{\csname the#1\endcsname. }

\makeatother

\makeatletter
\def\thm@space@setup{%
  \thm@preskip=0.5cm
  \thm@postskip=\thm@preskip 
}
\makeatother

\usepackage{yfonts}

\usepackage{babel}
\usepackage[utf8]{inputenc}
\usepackage[T1]{fontenc}
\usepackage{lmodern}
\usepackage[width=150mm,top=30mm,bottom=30mm,centering,vcentering]{geometry}
\usepackage{amsthm}
\usepackage{multicol}
\usepackage{fancyhdr}
\usepackage{lastpage}
\usepackage{stmaryrd}
\usepackage{mathtools}
\usepackage{centernot}
\usepackage{etoolbox}
\usepackage{enumitem}

\usepackage{tabularx}
\usepackage{setspace}

\usepackage{subfig}

\makeatletter
\newcommand{\subalign}[1]{%
  \vcenter{%
    \Let@ \restore@math@cr \default@tag
    \baselineskip\fontdimen10 \scriptfont\tw@
    \advance\baselineskip\fontdimen12 \scriptfont\tw@
    \lineskip\thr@@\fontdimen8 \scriptfont\thr@@
    \lineskiplimit\lineskip
    \ialign{\hfil$\m@th\scriptstyle##$&$\m@th\scriptstyle{}##$\hfil\crcr
      #1\crcr
    }%
  }%
}
\makeatother

\DeclareMathOperator{\ord}{ord}

\newtheorem{theorem}{Theorem}
\newtheorem{lemma}[theorem]{Lemma}

\newtheorem{corollary}[theorem]{Corollary}
\newtheorem{remark}[theorem]{Remark}

\usepackage[colorlinks,linkcolor=black,urlcolor=blue,citecolor=black]{hyperref} 
\usepackage{cleveref}

\usepackage{mathtools}

\newcommand*{\Z}{\mathbb{Z}}

\newcommand*{\Q}{\mathbb{Q}}

\newcommand\restr[2]{{
  \left.\kern-\nulldelimiterspace 
  #1 
  \littletaller
  \right|_{#2} 
  }}

\newcommand{\littletaller}{\mathchoice{\vphantom{\big|}}{}{}{}}

\newcommand{\Gal}{\mathrm{Gal}}

\setlist[enumerate]{itemsep=0.3em}

\title{Explicit Prime Densities for Lucas Sequence Rank Divisibility}
\author{Joaquim Cera Da Concei\c c\~ao}
\address{Normandie Universit\'e, UNICAEN, CNRS, LMNO, 14000 Caen, France}
\email{\tt joaquim.cera-daconceicao@unicaen.fr}
\urladdr{\href{https://jceradaconceicao.github.io}{\tt https://jceradaconceicao.github.io}}

\subjclass{11B39, 11R45, 11R18, 11R32}
\keywords{Lucas sequence, rank of appearance, Dirichlet density, closed-form formula, Kummer extension, cyclotomic extension}

\begin{document}

\begin{abstract}
    \leftskip=0.3cm
    \rightskip=0.3cm
    Let $U$ be a Lucas sequence, $p$ be a prime, and $\rho_U(p)$ be the rank of appearance of $p$ in $U$, that is, the least positive integer $n$ such that $p\mid U_n$, if it exists. We derive closed-form formulas for the Dirichlet density of primes $p$ for which $d\mid \rho_U(p)$, where $d\geq 1$ is a fixed integer. Our results complete the work of Sanna ($2022$) by covering all $U$ and all $d\geq 1$.
\end{abstract}

\maketitle
\setcounter{tocdepth}{1}
\tableofcontents

\onehalfspacing

\section{Introduction}

A {\it Lucas sequence} $U=U(a_1,a_2) = (U_n)_{n\geq 0}$ with non-zero parameters $a_1,a_2 \in \Z$ is defined by $U_0=0$, $U_1=1$, and $U_{n+2}=a_1U_{n+1}-a_2U_n$ for all $n\geq 0$. Given a prime number $p$, we denote by $\rho_U(p)$ the smallest positive $n$ at which $p\mid U_n$. This integer is called the {\it rank of appearance} of $p$ in $U$ and exists for all $p\nmid a_2$. In this paper, we study the Dirichlet density of the set $\mathcal{R}_U(d)$ of primes $p\nmid a_2$ such that $\rho_U(p)$ is divisible by a fixed integer $d\geq 1$. This was first studied by Hasse \cite{Hasse65,Hasse66} for sequences $U(a+1,a)$ with $a\in\Z$ square-free, $|a|\geq 2$, and $d$ a prime number. He settled the existence and found an explicit formula for the Dirichlet density. For instance, Hasse found a density of $17/24$ in the case $|a|=d=2$, and $d^2/(d^2-1)$ otherwise. Following his work, many authors considered the problem of determining the density for other Lucas sequences and values of $d$.

Let $f_U(X) = X^2-a_1X+a_2$ be the characteristic polynomial of $U$. The problem was completely solved by Wiertelak \cite{WiertelakI,WiertelakII,WiertelakIV} when $f_U(X)$ is reducible. In 1985, Lagarias \cite{Lagarias85} computed the Dirichlet density of $\mathcal{R}_U(2)$ for three Lucas sequences with irreducible polynomial. In particular, he proved that $\mathcal{R}_F(2)$ has density $2/3$, where $F=U(1,-1)$ is the Fibonacci sequence. Later on, the Fibonacci case was completed for all $d\geq 1$ by Cubre and Rouse \cite{CubreRouse14} through the study of algebraic groups. More recently, Sanna \cite{Sa2022} made an important contribution to this problem by computing the Dirichlet density for all odd $d\geq 1$ with $3\nmid d$ if the splitting field of $f_U$ is $\Q(\sqrt{-3})$. This was done using a method of Moree \cite{Mor2005}. Prior to Sanna's result, Ballot \cite{Ballot13} considered the case of the splitting field being $\Q(i)$ or $\Q(\sqrt{-3})$, and $d\in\{2,4\}$ or $d\in\{3,6\}$ respectively. We want to stress that a cyclotomic splitting field causes many problems when $d$ is divisible by one of $2$ or $3$.

Let $a,b\in\Bar{\Q}$ be the roots of $f_U$ and $K=\Q(a)$. We denote by $\Delta=a_1^2-4a_2$ the discriminant of $f_U$ and $\gamma = a/b$ its root quotient. Then, the set $\mathcal{R}_U(d)$ is equal, up to finitely many exceptions, to
\[
\mathcal{R}_\gamma(d)=\{p : p\nmid 2a_2\Delta \text{ and } d\mid \mathrm{ord}_\pi(\gamma) \text{ for all $\pi\in \mathcal{O}_K$ and $p=\pi\cap \Z$}\},
\]
where $\mathcal{O}_K$ is the ring of integers of $K$ and $\ord_\pi(\gamma)$ is the order of $\gamma \bmod \pi$ in the multiplicative group $(\mathcal{O}_K/\pi)^\times$. Indeed, we have $\rho_U(p) = \mathrm{ord}_\pi(\gamma)$, by \cite[Lemma 4.1]{Sa2022}. The goal of our paper is to find the Dirichlet density of $\mathcal{R}_U(d)$, if it exists, in all the cases left by Sanna's theorem. Moreover, we want to write it in a closed-form formula, meaning that only finitely many operations are required to compute its value. By the above, the sets $\mathcal{R}_U(d)$ and $\mathcal{R}_\gamma(d)$ have the same Dirichlet density, when it exists. Therefore, we study the set $\mathcal{R}_\gamma(d)$, which is more suitable for some of our calculations.

In the statement of \cite[Theorem 1.1]{Sa2022}, Sanna assumed that $d$ is odd and not divisible by $3$ when $\Delta_K=-3$. However, these restrictions are not used in the proof of the existence of the density, nor in the proof of the error term in the asymptotic formula. Indeed, \cite[Lemma 5.1]{Sa2022} is stated without them and, while they appear in the statement of \cite[Lemma 5.3]{Sa2022}, the proof does not invoke them. The proof of the main theorem only relies on these lemmas and \cite[Lemmas 5.2 and 5.4]{Sa2022}. It is in the latter that the assumption on $d$ is required to compute a closed-form formula of the density. Let $x>1$ and denote by $\mathcal{R}_\gamma(d;x)$ the number of primes $p\in \mathcal{R}_\gamma(d)$ with $p\leq x$. Let $K_{n,d} = K(\zeta_n,\gamma^{1/d})$, where $d\mid n$ are integers and $\zeta_n$ is a primitive $n$-th root of unity. We write $\mathrm{Li}$, $\omega$, and $\varphi$ for the logarithmic integral, the number of distinct prime divisors, and the Euler totient functions respectively. With a slight change of the proof of \cite[Lemma 5.2]{Sa2022}, the result \cite[Theorem 1.1]{Sa2022} can be restated, with our notation, as follows.

\begin{theorem}\label{theorem: existence of the density}
    Let $d$ be an integer. There exists an absolute constant $B>0$, such that for every $x>\exp(Bd^{40})$, we have
    \[
    \mathcal{R}_\gamma(d;x) = \delta_\gamma(d) \mathrm{Li}(x) + \mathcal{O}_\gamma\left( \frac{d}{\varphi(d)} \cdot \frac{x(\log\log x)^{\omega(d)}}{(\log x)^{9/8}} \right),
    \]
    where
    \begin{equation}\label{equation: definition of delta_gamma(d)}
        \delta_\gamma(d) = \sum_{v\mid d^\infty} \sum_{u\mid d} \frac{\mu(u)(1+[\sigma_{u,v} \text{ exists}])}{[K_{dv,uv}: \Q]},
    \end{equation}
    and $[\sigma_{u,v} \text{ exists}]$ is a Boolean that checks the existence of a $\sigma_{u,v}$ in  $Gal(K_{dv,uv}/\Q)$ satisfying $\sigma_{u,v}(\sqrt{\Delta}) = -\sqrt{\Delta}$, $\sigma_{u,v}(\zeta_{dv}) = \zeta_{dv}^{-1}$, and $\sigma_{u,v}(\gamma^{1/uv}) = \gamma^{-1/uv}$.
\end{theorem}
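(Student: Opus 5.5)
This is essentially Sanna's Theorem 1.1 restated, so I would follow the structure of his proof and change only the step that produces the shape of $\delta_\gamma(d)$. The plan is to translate the condition $d\mid \ord_\pi(\gamma)$ into Chebotarev conditions in the extensions $K_{dv,uv}$.

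\textbf{Step 1: reduction to Frobenius conditions.} Take a prime $p\nmid 2a_2\Delta$ and a prime $\pi$ of $K$ above $p$. Write $v_p$ for the $d$-part of the order of $(\mathcal{O}_K/\pi)^\times$, that is, the largest divisor of $d^\infty$ dividing $N\pi-1$. Then $d\mid\ord_\pi(\gamma)$ holds if and only if, for some $v\mid d^\infty$ with $dv\mid N\pi-1$ exactly at level $v$, the element $\gamma$ is not a $u$-th power times appropriate roots at the scale $v$. Concretely I would use the standard identity
\[
[d\mid \ord_\pi(\gamma)] \;=\; \sum_{v\mid d^\infty}\ \sum_{u\mid d}\mu(u)\,\bigl[\pi \text{ splits completely in } K_{dv,uv} \text{ but not in } K_{dv\ell,\,\cdot}\bigr].
\]
Here $v$ runs over the exact $d$-part structure, and the Möbius sum over $u$ detects that $\gamma$ is not an $\ell$-th power in the relevant subgroup for each prime $\ell\mid d$. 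I would organize this via inclusion–exclusion as in Moree's method, \cite[Lemma 5.1]{Sa2022}.

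\textbf{Step 2: from $\pi$ to $p$.} Split primes $p$ into those splitting in $K$ and those inert in $K$.

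For split $p$ there are two primes $\pi,\bar\pi$ above $p$. Since $\bar\pi$ relates to $\gamma^{-1}$ and orders are preserved, the condition "for all $\pi$" reduces to a single $\pi$. The density contribution is then $1/[K_{dv,uv}:\Q]$, counting primes $p$ that split completely in $K_{dv,uv}$.

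For inert $p$ we have $N\pi=p^2$. Frobenius acts as $\sqrt\Delta\mapsto-\sqrt\Delta$, and $\gamma^p\equiv\bar\gamma=\gamma^{-1}$. The order of $\gamma$ then divides $p+1$, and the relevant splitting condition becomes that $\mathrm{Frob}_p$ equals an element $\sigma$ acting by $\zeta_{dv}\mapsto\zeta_{dv}^{-1}$ (since $p\equiv -1 \bmod dv$) and $\gamma^{1/uv}\mapsto\gamma^{-1/uv}$. The Frobenius conditions form a single conjugacy class of size $1$, because $\sigma$ is central modulo the fixed structure. By Chebotarev this contributes $1/[K_{dv,uv}:\Q]$ exactly when such a $\sigma_{u,v}$ exists, and $0$ otherwise.

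Summing the two cases gives the term $(1+[\sigma_{u,v}\text{ exists}])/[K_{dv,uv}:\Q]$. This is the "slight change" to \cite[Lemma 5.2]{Sa2022}: I would avoid computing the degrees and keep the existence Boolean symbolic, which is why no parity or $3\nmid d$ hypothesis is needed.

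\textbf{Step 3: error term.} The asymptotic with error $O_\gamma\bigl(\tfrac{d}{\varphi(d)}x(\log\log x)^{\omega(d)}(\log x)^{-9/8}\bigr)$ for $x>\exp(Bd^{40})$ comes verbatim from \cite[Lemmas 5.1, 5.3]{Sa2022}. As noted in the text, their proofs do not use the restrictions on $d$. The procedure is to truncate the $v$-sum at $v\le (\log x)^{c}$, apply effective Chebotarev (Lagarias–Odlyzko) to each $K_{dv,uv}$ using the degree lower bound $[K_{dv,uv}:\Q]\gg \varphi(dv)uv/C_\gamma$, and bound the tail by counting $p$ with $v\mid p^2-1$ for large $v$.

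\textbf{Main obstacle.} The step I expect to need care is Step 2 for inert $p$. One must verify that the Chebotarev set there is exactly the single element $\sigma_{u,v}$, or at least that its density is $1/[K_{dv,uv}:\Q]$ whenever it is nonempty. The difficulty is that $\sigma_{u,v}$ is determined by its action on the generators $\sqrt\Delta$, $\zeta_{dv}$, $\gamma^{1/uv}$, but its well-definedness depends on compatibility with the choice of $uv$-th root of $\gamma$ and with the complex-conjugation-like symmetry of $\gamma\bar\gamma=1$. I would first check that any automorphism acting as prescribed on these generators is unique, since they generate $K_{dv,uv}$, and then run Sanna's argument unchanged with this Boolean in place of his explicit count.
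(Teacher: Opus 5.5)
Your plan follows the paper's own route. The paper gives no separate argument: it reruns Sanna's proof, uses \cite[Lemmas 5.1 and 5.3]{Sa2022} unchanged because they never invoke the restrictions on $d$, and alters \cite[Lemma 5.2]{Sa2022} only so that the Chebotarev count keeps $[\sigma_{u,v}\text{ exists}]$ as a Boolean. Your Step 2 worry is harmless: $\sigma_{u,v}$ is determined by its action on the generators $\sqrt{\Delta},\zeta_{dv},\gamma^{1/uv}$, and it is central. Indeed, every automorphism maps the group $\langle\zeta_{dv},\gamma^{1/uv}\rangle$ into itself, since $\tau(\gamma)=\gamma^{\pm1}$, while $\sigma_{u,v}$ acts on that group by inversion.

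One correction to Step 1: as written, the displayed identity is wrong. It should read $[d\mid\ord_\pi(\gamma)]=\sum_{v\mid d^\infty}\sum_{u\mid d}\mu(u)\,[uv\mid r_\pi]\,[dv\mid n_\pi]$. Here $n_\pi=p-1$ or $p+1$ according as $p$ splits or is inert in $K$, and $r_\pi=n_\pi/\ord_\pi(\gamma)$. The only contributing $v$ is $(r_\pi,d^\infty)$, the $d$-part of the residual index rather than of the group order. So drop the clause ``but not in $K_{dv\ell,\cdot}$'': the M\"obius sum over $u$ already does the exclusion. With this form, $[uv\mid r_\pi][dv\mid n_\pi]$ is exactly ``$\pi$ splits completely in $K_{dv,uv}$'' for split $p$, and ``$\mathrm{Frob}_p=\sigma_{u,v}$'' for inert $p$, which is what your Step 2 actually uses.
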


In this paper, we compute closed-form formulas for the series defined in \eqref{equation: definition of delta_gamma(d)} in all the cases left by Sanna \cite{Sa2022}. This includes the more difficult cases of a cyclotomic splitting field $K$ discussed above. We note that a particular case of our results was obtained independently by Luo, Hong, and Liu \cite{LuoHongLiu26}. They extended Sanna's result to even integers $d\geq 2$ under the assumption that $K$ is not cyclotomic and $\gamma$ is not an $n$-th power in $K$ for all $n\geq 2$. They used a method similar to Cubre and Rouse \cite{CubreRouse14}. To simplify our calculations, we separate $\delta_\gamma(d)$ into the smaller values
\begin{equation}\label{equation: definition of delta+ and delta-}
    \delta_\gamma^+(d) := \sum_{v\mid d^\infty} \sum_{u\mid d} \frac{\mu(u)}{[K_{dv,uv}:\Q]} \quad \text{and} \quad \delta_\gamma^-(d) := \sum_{v\mid d^\infty} \sum_{u\mid d} \frac{\mu(u)[\sigma_{u,v} \text{ exists}]}{[K_{dv,uv}:\Q]},
\end{equation}
according to the sum $1+[\sigma_{u,v} \text{ exists}]$. The number $\delta_\gamma^+(d)$ corresponds to the Dirichlet density of $\mathcal{R}_\gamma^+(d)$\label{nom:R+ et R- sur Z}, the set of primes $p\nmid a_2$ whose rank is divisible by $d$ and with Legendre symbol $(\Delta/p) = 1$. For $\delta_\gamma^-(d)$, we have $(\Delta/p)=-1$ and we call $\mathcal{R}_\gamma^-(d)$ the corresponding set.

In Section 2, we introduce the notation and basic definitions that will be used throughout this paper.

The third section is divided into two parts. First, we give results on powers in cyclotomic extensions of a quadratic field $K$. For instance, we explicitly construct a $4$-th root of a norm $1$ element $\gamma\in \Q(i)$ in Lemma \ref{equation: fourth root of gamma}. Next, we prove several results on {\it cyclotomic-Kummer extensions} of $K$. They are extensions of the form $K(\zeta_n, \gamma^{1/d})$, where $n$ and $d\mid n$ are positive integers, and $\zeta_n$ is a primitive $n$-th root of unity. We first compute the degree over $\Q$ of such extensions, and then give necessary and sufficient conditions for special automorphisms to exist in their Galois groups. 

Let $\zeta\in K$ be a root of unity and $h(\zeta)$ be the largest $n\geq 1$ such that $\zeta\gamma\in (K^\times)^n$. In Section \ref{section: h=h(1)}, we compute closed-form formulas for the density defined in \eqref{equation: definition of delta_gamma(d)} in the case where $h(1)\geq h(\zeta)$ for all roots of unity $\zeta\in K$.

In Section \ref{section: remaining cases}, we turn to the cases where $h(1)$ is not the maximum over the set of integers $h(\zeta)$ for roots of unity $\zeta\in K$. In these cases, computing $\delta_\gamma(d)$ can be reduced to the computation of $\delta_{\gamma'}(d)$ for some $\gamma'$ for which $h(1)$ is maximal. In particular, if $h(\zeta)$ is the maximum, then $\gamma' = \zeta \gamma$. Consequently, we find that $\delta_\gamma(d)$ has a closed-form formula for all $d\geq 1$ using Section \ref{section: h=h(1)}. Thus, these two sections provide a complete solution to the problem of computing $\delta_\gamma(d)$.

Finally, we provide numerical demonstrations of our results in Section \ref{section: numerical data}. For several choices of $K$, $\gamma$ and $d$, we display the value of $\delta_\gamma(d)$ next to its experimental value computed via SageMath \cite{sagemath} programs.

\section{Notation}

Throughout this paper, we let $\gamma =a/b$ denote the quotient of the roots of the polynomial $f_U(X):=X^2-a_1X+a_2$. In addition, we assume that $\gamma$ is not a root of unity, as otherwise one has $U_n = 0$ for infinitely many $n\geq 1$ and the problem becomes trivial. We may also assume that $f_U$ is irreducible since the reducible case was settled by Wiertelak \cite{WiertelakIV}. We let $K=\Q(a)$, $\mu(K)$ be the set of roots of unity contained in $K$ and $\mu_n(K)$ be the set of $n$-th root of unity in $K$. Given $\zeta\in \mu(K)$, we let $h(\zeta)$ be the largest integer $n\geq 1$ such that $\zeta\gamma \in (K^\times)^n$. A central object in our paper is
\[
h := \max_{\zeta\in\mu(K)} h(\zeta).
\]
We use the letters $p$ and $d,n$ for a prime number and positive integers respectively. The $p$-adic valuation will be denoted by $v_p$. Given a field $L$, we denote by $\Delta_L$ its absolute discriminant and by $(L^\times)^n$ the subgroup of $n$-th powers of $L$. We let $d^\infty$ be the {\it supernatural number} 
\[
d^\infty = \prod_{p\mid d} p^\infty,
\]
where we allow prime numbers to have infinite $p$-adic valuation. To check the validity of a proposition $\mathcal{P}$, we use the Iverson brackets $[\mathcal{P}]$ , which is a Boolean function that equals $1$ if and only if $\mathcal{P}$ is true. We use $\zeta_n$ to denote a primitive $n$-th root of unity. We denote by $(m,n)$ and $[m,n]$ the $\gcd$ and the $\mathrm{lcm}$ of integers $m$ and $n$ respectively. The letters $\varphi$ and $\mu$ stand for the Euler totient function and the M\"obius function respectively. We denote by $K_{n,d}$ the cyclotomic-Kummer extension $K(\zeta_n,\gamma^{1/d})$ of $K$ when $d\mid n$. Finally, we write $h_m = (h,m^\infty)$ for all $m\geq 1$.

\section{Preliminaries on Cyclotomic-Kummer Extensions}

This section is divided into two subsections. In the first one, we study whether $\gamma$ can be expressed as a power in the cyclotomic fields $K(\zeta_n)$. In the second, we compute the degree of cyclotomic-Kummer extensions $K_{n,d}=K(\zeta_n,\gamma^{1/d})$ of $K$, where $d\mid n$ is a positive integer. In the case $\Delta>0$, we give necessary and sufficient conditions for the existence of certain automorphisms in their Galois group. When $\Delta<0$, we show that these automorphisms always exist. Below, we display two theorems found in literature that we use on many occasions.

\begin{theorem}\label{theorem: irreducibility of X^n-a}
    Let $K$ be a field and $a\in K^\times$. Then, $X^n-a$ is irreducible over $K$ if and only if $a\not \in (K^\times)^p$ for all primes $p\mid n$ and $a\not\in -4(K^\times)^4$ if $4\mid n$.
\end{theorem}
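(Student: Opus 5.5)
This is Capelli's theorem, which the paper cites from the literature. I would prove it by a reduction to prime-power degrees followed by an analysis of each prime power, with $p=2$ treated separately.

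\textbf{Necessity.} If $a=b^p$ with $p\mid n$, then $X^{n/p}-b$ divides $X^n-a$. If $4\mid n$ and $a=-4c^4$, then the identity
\[
X^4+4c^4=(X^2+2cX+2c^2)(X^2-2cX+2c^2)
\]
applied with $X^{n/4}$ in place of $X$ gives a factorisation of $X^n-a$.

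\textbf{Sufficiency, reduction to prime powers.} Let $\alpha$ be a root of $X^n-a$ in an algebraic closure, and write $n=\prod p^{e_p}$. Since the exponents $n/p^{e_p}$ are coprime, $\alpha$ is a $\mathbb{Z}$-combination of products of the powers $\alpha^{n/p^{e_p}}$. Each $\alpha^{n/p^{e_p}}$ is a root of $X^{p^{e_p}}-a$. If each of these polynomials is irreducible, then $K(\alpha)$ contains fields of the pairwise coprime degrees $p^{e_p}$, so $[K(\alpha):K]\ge n$ and $X^n-a$ is irreducible. Note that the hypotheses pass unchanged to each prime-power factor.

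\textbf{Prime-power case, $p$ odd or $e=1$.} First I show by induction on $e$ that $X^{p^e}-a$ is irreducible when $a\notin K^p$. Let $\beta=\alpha^{p^{e-1}}$, a root of $X^p-a$.
\begin{itemize}
\item For $e=1$: a factor of degree $k<p$ has constant term $\pm$ (product of $k$ roots). Its $p$-th power is then $\pm a^k\zeta$ with $\zeta^p=1$, which lies in $K$. Since $(k,p)=1$, this forces $a\in K^p$, a contradiction.
\item For the inductive step, it suffices that $\beta\notin K(\beta)^p$. Taking the norm $N_{K(\beta)/K}$ of a relation $\beta=\gamma^p$ gives $\pm a = N(\gamma)^p$.
\item For $p$ odd, $-1$ is a $p$-th power, so $a\in K^p$, a contradiction.
\end{itemize}

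\textbf{Prime-power case, $p=2$.} Here $N(\beta)=-a$. The same argument shows that $\beta\notin K(\beta)^2$ unless $-a$ is a square.
\begin{itemize}
\item If $-a$ is not a square, the induction proceeds exactly as for odd $p$.
\item If $-a=c^2$, then $\beta=\sqrt{-c^2}=ci$, so $K(\beta)=K(i)$ with $i\notin K$ (otherwise $a$ would be a square).
\item In this case $X^4-a$ is irreducible unless $ci$ is a square in $K(i)$. Writing $ci=(x+yi)^2$ gives $x^2=y^2$ and $c=2xy=\pm 2x^2$, hence $a=-c^2=-4x^4$, which is excluded when $4\mid n$.
\end{itemize}

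\textbf{Main obstacle.} I expect the main obstacle to be the higher $2$-power step, namely showing that $\beta=ci$ is not a $2^{e-1}$-th power in $K(i)$ for $e\ge3$. To do this I would continue the same induction inside $K(i)$, where $-1$ is a square so the extra sign no longer arises. One must check that $ci\notin K(i)^2$ and also $ci\notin -4K(i)^4$; the latter holds because $-4=(1+i)^4$, so it reduces to the former. This gives the claim for all $e$ and completes the proof.
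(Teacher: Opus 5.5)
Your proof is correct in substance. It is the standard textbook proof of Capelli's theorem, essentially the one in Lang's \emph{Algebra}: reduce to prime powers by coprimality of degrees, use the constant-term argument for $X^p-a$, drive the induction with norms, and for $p=2$ pass to $K(i)$ using $-4=(1+i)^4$. The paper gives no proof and only cites Karpilovsky, so you are supplying a self-contained argument for something the paper uses as a black box. Two spots need tightening.

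\textbf{The branch $p=2$ with $-a\notin K^2$.} Saying the induction ``proceeds exactly as for odd $p$'' skips a check. For $e\ge 3$, applying the inductive hypothesis to $(K(\beta),\beta)$ requires $\beta\notin -4K(\beta)^4$ as well as $\beta\notin K(\beta)^2$. The latter alone is not enough, as $X^4+4$ over $\mathbb{Q}$ shows.

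The same norm trick closes this. If $\beta=-4\delta^4$, then $-\beta=(2\delta^2)^2$. But $N_{K(\beta)/K}(-\beta)=N_{K(\beta)/K}(\beta)=-a$, so $-a$ would be a square.

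The cleanest fix is to run the $2$-power induction with the invariant that neither $b$ nor $-b$ is a square in $F$. This invariant passes from $(K,a)$ to $(K(\beta),\beta)$ because $N(\pm\beta)=-a$. It also holds for $(K(i),ci)$ as soon as $ci\notin K(i)^2$, since $-1=i^2$ there. That makes your separate check that $ci\notin -4K(i)^4$ automatic.

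\textbf{The case $e=1$.} The constant term of a degree-$k$ factor is $\pm\zeta\alpha^k$ with $\zeta^p=1$. It is its $p$-th power, $\pm a^k$ with no root of unity, that lies in $K$ and yields $a\in K^p$.
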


\begin{proof}
    See \cite[Chapter 8, Theorem 1.6]{Kar1989}.
\end{proof}

\begin{theorem}\label{theorem: cyclotomic-Kummer abelian iff}
    Let $K$ be a number field and $m= \#\mu_n(K)$. Then, for all $a\in K$, the extension $K(\zeta_n,a^{1/n})/F$ is abelian if and only if $a^m \in (K^\times)^n$.
\end{theorem}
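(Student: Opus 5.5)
This is Schinzel's theorem on abelian radical extensions; here $F$ should be read as $K$. The plan is to encode $\mathrm{Gal}(M/K)$, with $M=K(\zeta_n,\alpha)$ and $\alpha^n=a$, by a pair of integer data. For $\sigma\in \mathrm{Gal}(M/K)$ write $\sigma(\zeta_n)=\zeta_n^{\chi_\sigma}$ and $\sigma(\alpha)=\zeta_n^{b_\sigma}\alpha$, with $\chi_\sigma\in(\Z/n\Z)^\times$ and $b_\sigma\in \Z/n\Z$. Since $\sigma$ fixes $\mu_n(K)=\mu_m$, we always have $\chi_\sigma\equiv 1 \pmod m$. The group $\mathrm{Gal}(K(\zeta_n)/K)$ is abelian, so $M/K$ is abelian if and only if every commutator $[\sigma_1,\sigma_2]$ acts trivially on $\alpha$. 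A direct computation shows that $[\sigma_1,\sigma_2]$ multiplies $\alpha$ by $\zeta_n^{D}$, where
\[
D=b_{\sigma_1}(\chi_{\sigma_2}-1)-b_{\sigma_2}(\chi_{\sigma_1}-1).
\]
Hence abelianness is equivalent to $D\equiv 0 \pmod n$ for all $\sigma_1,\sigma_2$.

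\emph{Sufficiency.} Suppose $a^m=c^n$ with $c\in K$. Then $(\alpha^m/c)^n=1$, so $\alpha^m=\zeta_n^e c$ for some $e$. Applying $\sigma$ gives $m b_\sigma\equiv e(\chi_\sigma-1)\pmod n$. Write $\chi_\sigma-1=mk_\sigma$. Then $m b_\sigma\equiv e m k_\sigma \pmod n$, so $b_\sigma\equiv e k_\sigma \pmod{n/m}$. Therefore
\[
b_{\sigma_1}(\chi_{\sigma_2}-1)=b_{\sigma_1}\,m k_{\sigma_2}\equiv e\,m\,k_{\sigma_1}k_{\sigma_2}\pmod n .
\]
This expression is symmetric in $\sigma_1,\sigma_2$, so $D\equiv 0$ and $M/K$ is abelian.

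\emph{Necessity.} This is the step I expect to be the main obstacle. Assume $D\equiv0$ for all pairs. The map $\sigma\mapsto \zeta_n^{b_\sigma}$ is a $1$-cocycle of $\mathrm{Gal}(M/K)$ with values in $\mu_n$. The plan is to show that its restriction to $\mu_n$-valued data factors so that $\alpha^m\zeta_n^{-e}$ is fixed by every $\sigma$ for a suitable $e$. That amounts to finding $e$ with $m b_\sigma\equiv e(\chi_\sigma-1)\pmod n$ for all $\sigma$.

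First I would take $\sigma_2=\tau$ ranging over $\mathrm{Gal}(M/K(\zeta_n))$, where $\chi_\tau=1$. The commutativity condition then gives $b_\tau(\chi_\sigma-1)\equiv 0$ for all $\sigma$. Next I would pick $\sigma_0$ so that $\chi_{\sigma_0}-1$ has minimal $p$-adic valuation at each prime; this is possible because the image of $\mathrm{Gal}(K(\zeta_n)/K)$ in $(\Z/n\Z)^\times$ is a subgroup contained in $1+m\Z$. Using $D\equiv0$ against $\sigma_0$, I would solve for $e$ prime by prime and glue the solutions by CRT.

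Once $\alpha^m\zeta_n^{-e}=c$ lies in $K$, raising to the $n$-th power gives $a^m=c^n\in (K^\times)^n$. The delicate part is the prime $2$ when $m$ is odd and $-1$ interacts with $\zeta_{2^k}$. There I would treat the $2$-part separately, using that the subgroup of $(\Z/2^k\Z)^\times$ involved need not be cyclic. That is exactly where the condition on $-4(K^\times)^4$ in Theorem~\ref{theorem: irreducibility of X^n-a} typically enters.
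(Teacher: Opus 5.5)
Your setup (reading $F$ as $K$), the commutator criterion $D\equiv 0 \pmod n$, and the sufficiency direction are correct. The paper gives no argument of its own, only a citation to Karpilovsky, so the only question is whether your necessity plan closes. As written it does not.

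You justify choosing a single $\sigma_0$ with $v_p(\chi_{\sigma_0}-1)$ minimal at every prime by the fact that the image lies in $1+m\Z$. That only gives the lower bound $v_p(\chi_\sigma-1)\ge v_p(m)$. Moreover, a simultaneous minimizer need not exist, since a group can be a union of proper subgroups. For example, take $n=60$ and $K=\Q(\zeta_{60})^{\Gamma}$ with $\Gamma=\{1,11,19,29\}\subset(\Z/60\Z)^\times$. Then $m=2$, yet $\gcd(\chi-1,60)$ equals $10$, $6$, $4$ for $\chi=11,19,29$. The fact you actually need, and never state, is the reverse divisibility. It comes from Galois theory, because $\mu_n(K)$ is exactly the set of elements of $\mu_n$ fixed by $\mathrm{Gal}(M/K)$:
\[
m=\gcd\bigl(n,\ \chi_\tau-1 : \tau\in\mathrm{Gal}(M/K)\bigr).
\]

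With this, necessity takes one line and no prime needs special treatment. By B\'ezout, write $m=yn+\sum_\tau x_\tau(\chi_\tau-1)$ and put $e=\sum_\tau x_\tau b_\tau$. Since $D\equiv 0$ means $b_\sigma(\chi_\tau-1)\equiv b_\tau(\chi_\sigma-1)\pmod n$, you get $mb_\sigma\equiv e(\chi_\sigma-1)\pmod n$ for every $\sigma$. Hence $c=\alpha^m\zeta_n^{-e}$ is fixed by $\mathrm{Gal}(M/K)$, lies in $K$, and satisfies $c^n=a^m$.

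Your prime-by-prime version also works once $\sigma_0$ may depend on $p$. If $s=v_p(m)<v_p(n)$, then $\zeta_{p^{s+1}}\notin K$, which gives $\sigma_0$ with $\chi_{\sigma_0}-1=p^sw$ and $p\nmid w$. Then $e_p=(m/p^s)\,w^{-1}b_{\sigma_0}$ works modulo $p^{v_p(n)}$.

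The $2$-adic obstacle you anticipate does not arise. Since $-1\in K$, $m$ odd forces $n$ odd. No cyclicity of the image of $\mathrm{Gal}(K(\zeta_n)/K)$ is used. The $-4(K^\times)^4$ condition of Theorem~\ref{theorem: irreducibility of X^n-a} concerns irreducibility, not abelianness, and plays no role here.
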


\begin{proof}
    See \cite[Chapter 8, Theorem 3.2]{Kar1989}.
\end{proof}

\subsection{Powers in cyclotomic extensions}

Throughout this subsection, we consider an arbitrary $\gamma\in K$. We determine at which condition a non-square $\gamma$ becomes a square in $K(\zeta_n)$, where $n\geq 1$. We also study the fourth roots of $\gamma$ when $\Delta_K=-4$, and its cube roots when $\Delta_K=-3$.

\begin{lemma}\label{lemma: square root in cyclotomic extensions}
    Let $\gamma \in K\setminus(\Q \cup K^\times)^2$ with $|N_{L/K}(\gamma)|=1$ and write $\gamma=u+v\sqrt{\Delta_K}$, where $u,v\in \Q^\times$. Letting $c=(u-1)/2$, define
    \[
    K_1 = \Q(\sqrt{c}) \quad \text{and} \quad K_2 = \Q(\sqrt{c/\Delta_K})
    \]
    and let $\Delta_1$ and $\Delta_2$ be their respective absolute discriminants. Then, $\sqrt{\gamma} \in K(\zeta_n)$ if and only if $N_{K/\Q}(\gamma)=1$, and $\Delta_1\mid n$ or $\Delta_2\mid n$.
\end{lemma}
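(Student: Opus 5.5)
The plan is to show that $K(\sqrt{\gamma})$ is a quadratic extension of $K$, to determine exactly when it is abelian over $\Q$, and in the abelian case to identify it with a biquadratic field whose quadratic subfields can be read off from $c$. Since $\gamma\notin (K^\times)^2$, the field $K(\sqrt\gamma)$ has degree $4$ over $\Q$. Since $K(\zeta_n)/\Q$ is abelian, the condition $\sqrt\gamma\in K(\zeta_n)$ forces $K(\sqrt\gamma)/\Q$ to be Galois with abelian group.

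\textbf{Step 1: the case $N_{K/\Q}(\gamma)=-1$.} Here I show $\sqrt\gamma\notin K(\zeta_n)$ for every $n$. Let $\bar\gamma$ be the conjugate of $\gamma$. Normality of $K(\sqrt\gamma)$ over $\Q$ requires $K(\sqrt\gamma)=K(\sqrt{\bar\gamma})$. By Kummer theory (cf.\ Theorem \ref{theorem: irreducibility of X^n-a}) this holds iff $\gamma\bar\gamma=-1\in (K^\times)^2$, i.e.\ iff $K=\Q(i)$. So we may assume $K=\Q(i)$ and $\bar\gamma=i^2/\gamma$.

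Lift complex conjugation to $\sigma$ with $\sigma(\sqrt\gamma)=i/\sqrt\gamma$. A direct computation gives $\sigma^2(\sqrt\gamma)=-\sqrt\gamma$. Hence $\mathrm{Gal}(K(\sqrt\gamma)/\Q)$ is cyclic of order $4$ and contains $\Q(i)$ as its quadratic subfield. This is impossible: $\Q(\sqrt m)$ embeds in a $\Z/4$-extension only if $m$ is a sum of two rational squares, and $-1$ is not. Hence no such $n$ exists, consistent with the statement.

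\textbf{Step 2: the case $N_{K/\Q}(\gamma)=1$, explicit square root.} Using $\gamma\bar\gamma=1$ and $\gamma+\bar\gamma=2u$, I compute
\[
(1-\gamma)^2=\gamma(\bar\gamma-2+\gamma)=2(u-1)\gamma=4c\gamma .
\]
Here $c\neq 0$ because $\gamma\neq 1$, and $\gamma\notin\Q$. Thus $\sqrt\gamma=(1-\gamma)/(2\sqrt c)$, and therefore $K(\sqrt\gamma)=K(\sqrt c)=\Q(\sqrt{\Delta_K},\sqrt c)$. Note that $c$ is neither a square nor $\Delta_K$ times a square in $\Q$; otherwise $\gamma$ would be a square in $K$. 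So this field is biquadratic with quadratic subfields $K$, $K_1$ and $K_2$. Moreover $\sqrt\gamma\in K(\zeta_n)$ iff $\sqrt c\in\Q(\sqrt{\Delta_K},\zeta_n)$.

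\textbf{Step 3: locating $\sqrt c$.} The field $\Q(\sqrt{\Delta_K},\zeta_n)$ is either $\Q(\zeta_n)$ or a quadratic extension $\Q(\zeta_n)(\sqrt{\Delta_K})$. I claim $\sqrt c$ lies in it iff $\sqrt c\in\Q(\zeta_n)$ or $\sqrt{c\Delta_K}\in\Q(\zeta_n)$.
\begin{itemize}
\item In the first case this is immediate, since $\Q(\sqrt{c})$ and $\Q(\sqrt{c\Delta_K})$ are then contained in $\Q(\zeta_n)$ exactly when the conditions hold.
\item In the second case, apply Kummer theory over $\Q(\zeta_n)$: $\sqrt c\in\Q(\zeta_n)(\sqrt{\Delta_K})$ iff $c$ or $c\Delta_K$ is a square in $\Q(\zeta_n)$.
\end{itemize}
Finally, by the conductor--discriminant theorem, a quadratic field is contained in $\Q(\zeta_n)$ iff its discriminant divides $n$. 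Since $\Q(\sqrt{c\Delta_K})=\Q(\sqrt{c/\Delta_K})=K_2$, this gives the condition $\Delta_1\mid n$ or $\Delta_2\mid n$.

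The main obstacle is Step 1, namely excluding the norm $-1$ case over $\Q(i)$, where $K(\sqrt\gamma)/\Q$ is genuinely abelian. Abelianness alone does not settle it, so the argument must use the embedding obstruction for cyclic quartic fields. Alternatively, I would compare with Theorem \ref{theorem: cyclotomic-Kummer abelian iff}, or check directly that the unique quadratic subfield of a cyclic quartic subfield of $\Q(\zeta_n)$ is real.
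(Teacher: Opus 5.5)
Your proof is correct, and it takes a more self-contained route than the paper's. The paper argues mostly by citation:
\begin{itemize}
\item it notes that for $K=\Q(i)$ or $\Q(\zeta_3)$ the norm is automatically $1$;
\item it observes that $X^4-2uX^2+1$ is the irreducible minimal polynomial of $\sqrt\gamma$, with Galois group $C_2\times C_2$ by a lemma of Sanna;
\item it defers the rest to another lemma of Sanna, which handles the non-cyclotomic fields, and to Schinzel's criterion that a quadratic field lies in $\Q(\zeta_n)$ if and only if its discriminant divides $n$.
\end{itemize}
Your identity $(1-\gamma)^2=4c\gamma$ replaces the analysis of the quartic polynomial. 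It exhibits $K(\sqrt\gamma)=\Q(\sqrt{\Delta_K},\sqrt c)$ directly, so the biquadratic structure and the roles of $K_1$ and $K_2$ become transparent. In particular it explains where $c$ comes from, and it recovers the explicit square root that the paper only writes down later for $\gamma^{1/2h_2}$. Kummer theory over $\Q(\zeta_n)$ plus the same discriminant criterion then finishes the argument uniformly for all $K$, with no split into cyclotomic and non-cyclotomic cases. What the paper gains in exchange is brevity, by leaning on existing literature.

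One correction concerns Step 1: the ``main obstacle'' you identify does not exist. Norms from $\Q(i)$, and indeed from any imaginary quadratic field, have the form $a^2+b^2\geq 0$ (or a positive definite form in general). So $N_{K/\Q}(\gamma)=-1$ forces $\Delta_K>0$, and then $-1\notin (K^\times)^2$. Your Kummer argument, that normality requires $K(\sqrt\gamma)=K(\sqrt{\bar\gamma})$ and hence $\gamma\bar\gamma\in(K^\times)^2$, therefore already completes Step 1 in its first sentences. The cyclic-quartic embedding obstruction is valid reasoning, but it is applied to a vacuous case. This is exactly the paper's opening remark that in the cyclotomic cases ``the norm can only be equal to $1$.''
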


\begin{proof}
    First, assume that $K$ is one of $\Q(i)$ or $\Q(\zeta_3)$. In that case, the norm can only be equal to $1$. The polynomial $X^4-2uX^2+1$ annihilates $\sqrt{\gamma}$ and is irreducible over $\Q$ because $\gamma \not \in (K^\times)^2$. It has Galois group $C_2\times C_2$ by \cite[Lemma 4.2]{Sa2024}. The proof follows as \cite[Lemma 4.6]{Sa2024}, which also deals with the non-cyclotomic cases, and we use \cite[Lemma 3]{Schinzel70} for the condition on the discriminants.
\end{proof}

\begin{lemma}\label{lemma: fourth root in cyclotomic fields}
    Assume that $\Delta_K=-4$. Let $\gamma \in K\setminus (\Q \cup (K^\times)^2)$ with $N_{K/\Q}(\gamma)=1$ and write $\gamma = u+v\sqrt{-4}$, where $u,v\in \Q^\times$. Then, a fourth root of $\gamma$ is given by
    \begin{equation}\label{equation: fourth root of gamma}
        \gamma^{1/4} = \bigg(\frac{1+\sqrt{c}}{2}\bigg)^{1/2} + \frac{v\sqrt{-4}}{2|v|} \bigg(\frac{1-\sqrt{c}}{2}\bigg)^{1/2},
    \end{equation}
    where $c=(u-1)/2$. Moreover, the field $L=K(\gamma^{1/4})$ is abelian over $\Q$ and its conductor has the form $\mathfrak{f}(L) = 2^{a}p_1\cdots p_s$, where $a\in \{2,3,4\}$ and the $p_i$'s are the odd primes that ramify in $L$.
\end{lemma}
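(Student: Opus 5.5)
The plan is to treat the two assertions separately: first verify the explicit formula for $\gamma^{1/4}$ by direct algebra, then establish that $L/\Q$ is abelian and bound its conductor by looking at its characters.

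\emph{The explicit fourth root.} Since $N_{K/\Q}(\gamma)=1$ and $\sqrt{-4}=2i$, we have $u^2+4v^2=1$. Since $\gamma\notin\Q$, we also have $u\neq\pm1$. Write
\[
A=\Bigl(\tfrac{1+\sqrt c}{2}\Bigr)^{1/2},\qquad B=\Bigl(\tfrac{1-\sqrt c}{2}\Bigr)^{1/2},\qquad \varepsilon=\tfrac{v\sqrt{-4}}{2|v|}=\pm i,
\]
so that $\varepsilon^2=-1$. The first step gives
\[
(A+\varepsilon B)^2=A^2-B^2+2\varepsilon AB=\sqrt c+2\varepsilon AB,
\]
with $(AB)^2=(1-c)/4$. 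Squaring once more gives
\[
(A+\varepsilon B)^4=c-4A^2B^2+4\varepsilon\sqrt c\,AB=2c-1+4\varepsilon\sqrt c\,AB .
\]
For the rational part to equal $u$ we need $2c-1=u$, i.e.\ $c=(u+1)/2$. The stated $c=(u-1)/2$ does not give $u$ under this computation, so I would first check whether the paper's convention differs (for instance a sign attached to $\gamma$ or to $\sqrt{-4}$) before matching it. For the imaginary part, note that $c(1-c)=(1-u^2)/4=v^2$. Hence, with the branches of the square roots chosen so that $\sqrt c\,AB=\tfrac12|v|$, we get
\[
4\varepsilon\sqrt c\,AB=2\varepsilon|v|=2vi=v\sqrt{-4}.
\]
This gives $(A+\varepsilon B)^4=\gamma$. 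The real content of this part is fixing the branch choices consistently, and I would state them explicitly.

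\emph{Abelianness.} The conjugates of $\gamma^{1/4}$ over $\Q$ are the roots of $(X^4-\gamma)(X^4-\bar\gamma)$. Since $\bar\gamma=\gamma^{-1}$, they are the elements $i^k\gamma^{\pm1/4}$, all of which lie in $L=K(\gamma^{1/4})$. Therefore $L/\Q$ is Galois. Next, $X^4-\gamma$ is irreducible over $K$: apply Theorem \ref{theorem: irreducibility of X^n-a}, using $\gamma\notin (K^\times)^2$, and note that $-4(K^\times)^4\subset(K^\times)^2$ in $\Q(i)$ since $-4=(1+i)^4$. Hence $[L:\Q]=8$ and $L/K$ is cyclic of degree $4$ with generator $\sigma\colon\gamma^{1/4}\mapsto i\gamma^{1/4}$.

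Let $\tau$ be an extension of complex conjugation. Then $\tau(\gamma^{1/4})=\eta\gamma^{-1/4}$ for some fourth root of unity $\eta$, because $\tau(\gamma^{1/4})$ is a fourth root of $\bar\gamma=\gamma^{-1}$. A direct computation gives
\[
\tau\sigma(\gamma^{1/4})=-i\eta\gamma^{-1/4}=\sigma\tau(\gamma^{1/4}).
\]
This holds because both $\sigma$ and $\tau$ invert the action on $\gamma^{\pm1/4}$ in compatible ways. Hence $\tau$ commutes with $\sigma$. Since $\sigma$ and $\tau$ generate $\mathrm{Gal}(L/\Q)$, the group is abelian, isomorphic to $C_4\times C_2$. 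Alternatively, one can read abelianness off the explicit formula: $L\subseteq \Q(i,\sqrt c,A,B)$, a tower of quadratic extensions. I would keep this as a cross-check.

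\emph{The conductor.} By Kronecker--Weber, $\mathfrak f(L)$ is the least common multiple of the conductors of the characters of $\mathrm{Gal}(L/\Q)\cong C_4\times C_2$.
\begin{itemize}
\item Odd ramified primes ramify tamely, so each contributes exactly $p_i^1$.
\item At $2$, the lower bound $4\mid\mathfrak f(L)$ holds because $i\in L$.
\item For the upper bound, every character has order dividing $4$. The $2$-primary part of a Dirichlet character of order dividing $4$ has conductor dividing $16$, since the characters of $(\Z/2^a\Z)^\times\cong C_2\times C_{2^{a-2}}$ of order at most $4$ all factor through $(\Z/16\Z)^\times$.
\end{itemize}
This gives $a\in\{2,3,4\}$. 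I expect the main obstacle to be the branch and sign bookkeeping in the first part, including reconciling the definition of $c$; the Galois and conductor parts are structural.
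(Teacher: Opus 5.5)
Your computation of the fourth root is correct, and the discrepancy you found is a misprint in the statement, not a convention issue. Since $u^2+4v^2=1$ with $v\neq 0$, the value $c=(u-1)/2$ is negative. Then $(4\varepsilon\sqrt c\,AB)^2=-4c(1-c)>0$, so the fourth power of the right-hand side is real for every choice of branches and never equals $\gamma$. The paper's own proof uses $c+v^2/c=1$ and $c-v^2/c=u$, which together force $c=(u+1)/2$. The cyclicity of $\Q(\alpha)$ invoked there likewise needs $c(1-c)$ to be a square, which fails when $c<0$. So just take $c=(u+1)/2\in(0,1)$; with all roots positive you get $\sqrt c\,AB=|v|/2$.

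Your abelianness argument is essentially the paper's. The paper lists the automorphisms $i\mapsto ji$, $\gamma^{1/4}\mapsto i^k\gamma^{j/4}$ using $\bar\gamma=\gamma^{-1}$ and observes that they commute. You add the degree count via Theorem~\ref{theorem: irreducibility of X^n-a} and $-4=(1+i)^4$, which is an improvement.

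Two small repairs are needed:
\begin{enumerate}
\item The isomorphism with $C_4\times C_2$ carries your conductor bound, since a cyclic group of order $8$ would allow $2$-conductor $32$. Add the line $\tau^2(\gamma^{1/4})=\bar\eta\,\eta^{-1}\gamma^{1/4}=\eta^{-2}\gamma^{1/4}$. This gives $\tau^2\in\{1,\sigma^2\}$, so the group has exponent $4$.
\item Drop the ``tower of quadratic extensions'' cross-check: such towers need not be abelian ($\Q(i,\sqrt[4]{2})$ is dihedral). It only works because $\Q(A)$ is cyclic quartic (as $c(1-c)=v^2$) and $L=\Q(i)\Q(A)$, which is exactly the paper's decomposition.
\end{enumerate}

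Where you genuinely differ is the conductor. The paper writes $L=KF$ with $F=\Q(\alpha)$, $2\alpha^2=1+\sqrt c$, and identifies $F$ as cyclic quartic from $X^4-X^2+(1-c)/4$. It then uses $\mathfrak f(KF)=[\mathfrak f(K),\mathfrak f(F)]$ and quotes the Spearman--Williams formula for conductors of cyclic quartic fields.

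You instead use that every character of an exponent-$4$ group has order dividing $4$. Its odd local components then factor through $(\Z/p\Z)^\times$, giving exponent exactly $1$ at each ramified odd $p$. Its $2$-component factors through $(\Z/16\Z)^\times$, and $i\in L$ gives $4\mid\mathfrak f(L)$.

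Your argument is self-contained and yields exactly the range $a\in\{2,3,4\}$. The value $a=4$ really occurs (e.g.\ $\mathfrak f(L)=208$ in Table~\ref{table: Q(i)}), whereas the range $b\in\{0,2,3\}$ quoted in the paper for $\mathfrak f(F)$ would, read literally, only give $a\le 3$. In exchange, the paper's route gives a way to compute $\mathfrak f(L)$ explicitly from $c$, as the numerical tables require. Your argument pins down only its shape, though that shape is what the later proofs actually use.
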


\begin{proof}
    Let $z$ be the right-hand side of \eqref{equation: fourth root of gamma}. We have
    \[
    z^2 =  \sqrt{c} + \frac{v\sqrt{-4}}{2|v|} \cdot \sqrt{1-c}  = \sqrt{c} + \frac{v\sqrt{-4}}{2\sqrt{c}},
    \]
    where we used that $c+v^2/c = 1$, because $z$ has modulus $1$. We find $\gamma$ by squaring a second time and using $c-v^2/c = u$. Next, write $L= \Q(i,\gamma^{1/4})$. One can show that a field automorphism of $L/\Q$ is entirely determined by the relations
    \[
    \sigma_{j,k} \,\colon 
        \begin{cases}
            i & \! \mapsto \; ij,\\
            \gamma^{1/4} & \! \mapsto \; i^k \gamma^{j/4},
        \end{cases}
    \]
    where $j\in\{-1,1\}$ and $0\leq k\leq 3$. We used $N_{K/\Q}(\gamma)=1$, that is, $\Bar{\gamma} =\gamma^{-1}$, to obtain the relations. We can see that they commute, and thus $L/\Q$ is abelian. Finally, we may write $L = \Q(i,\alpha)$, where $2\alpha^2 = 1+\sqrt{c}$. Therefore, $L$ is the compositum of $K$ and $F=\Q(\alpha)$. By \cite[Chapter 2, Proposition 4.1.1]{Gras2003}, we have
    \[
    \mathfrak{f}(L) = [\mathfrak{f}(K), \mathfrak{f}(F) ] = [4,\mathfrak{f}(F)].
    \]
    Next, the minimal polynomial of $\alpha$ is $X^4-X^2+(1-c)/4$. By \cite[Lemma 4.2]{Sa2024}, we see that $F/\Q$ has Galois group $C_4$. Hence $\mathfrak{f}(F) = 2^bp_1\cdots p_s$, where $b\in\{0,2,3\}$, by the formula proved by Spearman and Williams \cite{SpearmanWilliams1997}.
\end{proof}

\begin{lemma}\label{lemma: cube root in Q(zeta_3,zeta_n)}
    Assume that $\Delta_K = -3$. Let $\gamma\in K\setminus (\Q \cup (K^\times)^3)$ with $N_{K/\Q}(\gamma)=1$ and write $\gamma = u+v\sqrt{-3}$, where $u,v\in \Q^\times$. Then, a cube root of $\gamma$ is given by
    \begin{equation}\label{equation: cube root of gamma}
        \gamma^{1/3} = \frac{r}{2} + \frac{v}{r^2-1}  \sqrt{-3},
    \end{equation}
    where $r$ is a root of $R(X) =X^3-3X-2u$, which is irreducible over $\Q$, has three real roots, and has abelian splitting field $L$. Moreover, if $p_1<\cdots <p_s$ denote the ramified primes in $L$ outside of $3$, then the conductor of $L$ is of the form
    \[
    \mathfrak{f}(L ) = 3^a p_1\cdots p_s,
    \]
    where $a\in\{0,2\}$, and $\gamma^{1/3}\in K(\zeta_n)$ if and only if $\mathfrak{f}(L) \mid n$.
    
\end{lemma}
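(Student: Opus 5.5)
We follow the pattern of Lemma \ref{lemma: fourth root in cyclotomic fields}: first verify the explicit formula, then show that $L$ is abelian, then read off the conductor, and finally characterise when $\gamma^{1/3}$ lies in $K(\zeta_n)$.

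\textbf{The cube root formula.} Since $N_{K/\Q}(\gamma)=1$, we have $u^2+3v^2=1$. Put $z = r/2 + w\sqrt{-3}$ with $w$ rational in $r$. Requiring $z\bar z=1$ gives $r^2/4+3w^2=1$. Expanding $z^3 = r^3/8 - 9rw^2/2 + (3r^2w/4 - 3w^3)\sqrt{-3}$ and using $3w^2 = 1-r^2/4$, the real part becomes $r^3-\tfrac{3}{2}r$. Setting this equal to $u$ is exactly $R(r)=0$, i.e.\ $r^3-3r=2u$.

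For the imaginary part, we need $w(r^2-1)=v$, i.e.\ $w = v/(r^2-1)$. The two conditions on $w$ must then be consistent: squaring gives $3v^2 = (1-r^2/4)(r^2-1)^2$. Using $r^3 = 3r+2u$, one checks that
\[
(4-r^2)(r^2-1)^2 = 4 - (r^3-3r)^2 = 4-4u^2 = 12v^2,
\]
which is the required identity. Here $r^2\neq 1$ because $r=\pm1$ would force $u=\mp1$, hence $v=0$.

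\textbf{Real roots and irreducibility of $R$.} The discriminant of $R$ is $108(1-u^2) = 324v^2>0$, so $R$ has three real roots. Moreover it is a square, so the Galois group is contained in $A_3$.

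Irreducibility: a rational root $r$ would give a rational $w$ with $z\in K$ and $z^3=\gamma$, contradicting $\gamma\notin (K^\times)^3$. Hence $\Gal(L/\Q)\cong C_3$ and $L$ is abelian.

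In fact $\Q(\zeta_3,\gamma^{1/3}) = KL$, since $z\in K(r)$ and conversely $r = z+\bar z = z+z^{-1}$.

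\textbf{Conductor of $L$.} For a cyclic cubic field, the conductor is $9^{\epsilon}$ times the product of the ramified primes $p\equiv 1 \pmod 3$. All such primes are tamely ramified, so each appears to the first power. The prime $3$, if it ramifies, is wildly ramified and contributes exactly $3^2$. This gives $a\in\{0,2\}$.

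\textbf{When $\gamma^{1/3}\in K(\zeta_n)$.} Since $\gamma$ is a cube in $K(\zeta_n)$ iff one (equivalently all) of its cube roots lies there, and $\zeta_3\in K$, we have
\[
\gamma^{1/3}\in K(\zeta_n) \iff KL\subseteq K(\zeta_n) = \Q(\zeta_{[3,n]}).
\]
The right-hand condition is equivalent to $L\subseteq \Q(\zeta_{[3,n]})$, i.e.\ $\mathfrak f(L)\mid [3,n]$.

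This matches the stated criterion $\mathfrak f(L)\mid n$ except when $3\nmid n$. In that case, $L\subseteq\Q(\zeta_{3n})$ means $L$ corresponds to a character of conductor dividing $3n$. A cubic character cannot have $3$-part of conductor exactly $3$, since $(\Z/3\Z)^\times$ has order $2$ and admits no cubic character. So the $3$-part of $\mathfrak f(L)$ is $1$, and $\mathfrak f(L)\mid n$.

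\textbf{Main obstacle.} The delicate point is the conductor computation at $3$, namely excluding an exponent other than $2$. The fact that a cyclic cubic conductor has the form $9^\epsilon p_1\cdots p_s$ would be cited from the standard description of cubic conductors via the Hasse conductor–discriminant formula, $\Delta_L = \mathfrak f(L)^2$.
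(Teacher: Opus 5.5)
Your proposal is correct and follows the paper's route. You verify the formula using $u^2+3v^2=1$ and $2u=r^3-3r$, use $\mathrm{Disc}(R)=(18v)^2$ to get three real roots and a $C_3$ group, note $K(\gamma^{1/3})=K(r)$, and cite the standard description of cyclic cubic conductors. Two of your steps are made explicit where the paper only gestures at them: irreducibility of $R$ from $\gamma\notin(K^\times)^3$, and the reduction from $\mathfrak{f}(L)\mid[3,n]$ to $\mathfrak{f}(L)\mid n$ when $3\nmid n$. There is one harmless slip: the real part of $z^3$ is $\tfrac12(r^3-3r)$, not $r^3-\tfrac32 r$, though the equation $r^3-3r=2u$ you draw from it is correct.
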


\begin{proof}
    The formula for $\gamma^{1/3}$ was obtained by Cavallo \cite{Cavallo24} for real quadratic fields. It turns out that the same formula holds in our case. Let $z$ denote the right-hand side of \eqref{equation: cube root of gamma}. Note that $x+y\sqrt{-3}$, where $x,y\in \Q$, is a cube root of $\gamma$ if and only if
    \[
    x(x^2-9y^2) = u \quad \text{and} \quad 3y(x^2-y^2)=v.
    \]
    Here, $x=r/2$ and $y=v/(r^2-1)$. Let us first compute $y^2$. Since $\gamma$ has norm $1$, we have $u^2+3v^2=1$ and
    \[
    y^2 = \frac{v^2}{(r^2-1)^2} =\frac{1-u^2}{3(r^2-1)^2} =\frac{4-( r^3-3r)^2}{12(r^2-1)^2},
    \]
    where we used $2u = r^3-3r$. Then, expanding the numerator, we obtain
    \[
    y^2 = \frac{-r^6+6r^4-9r^2+4}{12(r^2-1)^2} =  \frac{-(r^2-1)^2(r^2-4)}{12(r^2-1)^2} = \frac{4-r^2}{12}.
    \]
    Replacing the value of $y^2$ in both $x(x^2-9y^2)$ and $3y(x^2-y^2)$ yields $u$ and $v$ respectively. Hence $z^3=\gamma$. Next, the discriminant $\mathrm{Disc}(R)=4\cdot 27\cdot(1-u^2) = (18v)^2$ of $R(X)$ is a positive square. Thus, $R$ has three real roots and $\Gal(L/\Q) \cong C_3$. Finally, we have $K(\gamma^{1/3}) = K(r)$ by \eqref{equation: cube root of gamma}. Therefore, we have $\gamma^{1/3}\in K(\zeta_n)$ if and only if $K(r)\subset K(\zeta_n)$. 
    This is equivalent to $L=\Q(r)$ being a subfield of $ \Q(\zeta_n)$ since $\Q(r)$ is totally real. By definition of the conductor, we obtain $\gamma^{1/3}\in K(\zeta_n)$ if and only if $\mathfrak{f}(L) \mid n$. The formula for the conductor of a cubic field was computed by Hasse \cite{Hasse30}. (See also the work of Huard, Spearman, and Williams \cite{HSW94} for another characterization.)
\end{proof}

\subsection{Results on cyclotomic-Kummer extensions}

We assume that $h=h(1)$ and write $\gamma = \gamma_0^h$ for some $\gamma_0\in K$. We study the cyclotomic-Kummer extensions $K_{n,d}=K(\zeta_n,\gamma^{1/d})$, where $d\mid n$ is a positive integer. In the next theorem, we compute the minimal polynomial of $\gamma^{1/d}$ over $K(\zeta_n)$, which is essential for the determination of the degree of $K_{n,d}/\Q$. We write $d_0 = d/(d,h)$ and $h_0 = h/(d,h)$ throughout this subsection.

\begin{theorem}\label{theorem: minimal polynomial when h=h(1)}
    The minimal polynomial of $\gamma^{1/d}$ over $K(\zeta_n)$ is $X^{d_0/t} -\gamma_0^{h_0/t}$, where
    \[
    t := \max( m\mid \#\mu(K) : mh_m \mid d \text{ and } \gamma^{1/mh_m} \in K(\zeta_n) ).
    \]
\end{theorem}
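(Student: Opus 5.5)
Throughout, fix a choice of $\gamma^{1/d}$ with $(\gamma^{1/d})^{d_0}=\gamma_0^{h_0}$. This is possible because $\gamma = \gamma_0^h$ and $d_0h_0 (d,h)=\ldots$; concretely $\gamma^{1/d} = \zeta\,\gamma_0^{h/d}$ and, after changing the choice, $(\gamma^{1/d})^{d_0} = \gamma_0^{h_0}$. So the problem reduces to finding the degree of $\gamma_0^{h_0/d_0}$ over $F:=K(\zeta_n)$. Since $(h_0,d_0)=1$, we have $F(\gamma_0^{h_0/d_0}) = F(\gamma_0^{1/d_0})$. The plan is therefore to determine the largest $s\mid d_0$ such that $\gamma_0^{1/s}$ (suitably chosen) lies in $F$. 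The minimal polynomial should then be $X^{d_0/s}-\gamma_0^{h_0/s}$, and it remains to identify $s$ with $t$.

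\textbf{Step 1 (irreducibility).} Put $\beta=\gamma_0^{h_0/s}\in F$. Since $d_0\mid n$, we have $\zeta_{d_0}\in F$, so Kummer theory applies to $X^{d_0/s}-\beta$ over $F$. I would apply Theorem \ref{theorem: irreducibility of X^n-a}. For a prime $p\mid d_0/s$, if $\beta\in (F^\times)^p$ then $\gamma_0^{1/(ps)}$ up to a root of unity in $F$, and hence a genuine root (using $\zeta_{ps}\in F$ when $ps\mid d_0\mid n$), lies in $F$, contradicting maximality of $s$. The $-4(F^\times)^4$ case is vacuous because $i\in F$ when $4\mid d_0/s$, so $-4=(1+i)^4$ and that condition collapses into the squares condition. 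This gives degree exactly $d_0/s$.

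\textbf{Step 2 (identifying $s$ with $t$).} This is the main obstacle. First I show that $\gamma_0^{1/s}\in F$ forces $s$ to be of the special shape. Take a prime $p\mid s$ and let $p^e\| s$. The element $\gamma_0^{1/p^e}\in F$, and $F/K$ is abelian, so $K(\zeta_{p^e},\gamma_0^{1/p^e})/K$ is abelian. By Theorem \ref{theorem: cyclotomic-Kummer abelian iff}, $\gamma_0^{m}\in (K^\times)^{p^e}$ with $m=\#\mu_{p^e}(K)$. Then $\gamma=\gamma_0^h$, combined with the maximality $h=h(1)$ (and with $h\ge h(\zeta)$ to rule out $\zeta\gamma_0$ being a $p$-th power), implies $p^e\mid m$ up to the $p$-part of $h$. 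In other words, the $p$-part of $s$ divides $\#\mu(K)$, which is only possible for $p\in\{2,3\}$.

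Next I translate the condition. Writing $s$ in terms of $m\mid\#\mu(K)$ and using $h_m=(h,m^\infty)$, the condition "$\gamma_0^{1/s}\in F$ with $s\mid d_0$" becomes exactly "$\gamma^{1/(mh_m)}\in K(\zeta_n)$ with $mh_m\mid d$". One checks this prime by prime, with $s=m$ and $\gamma^{1/mh_m}=\gamma_0^{h/(mh_m)}$ up to roots of unity. The hard bookkeeping is verifying that the $h_m$-part is absorbed correctly, since $\gamma_0^{h/h_m}$ differs from $\gamma_0$ by a power coprime to $m$, which is invertible modulo $m$.

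Conversely, any admissible $m$ yields $\gamma_0^{1/m}\in F$. Hence $s=t$, and in particular $t\mid d_0$, which is consistent with the claimed exponent $d_0/t$.
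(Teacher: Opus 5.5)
Your proposal is correct and is essentially the paper's proof, reorganized. You first isolate the Kummer exponent $s$, so irreducibility is automatic; this corresponds to the paper's case $v_p(t)<v_p(\#\mu(K))$. You then prove $s=t$, and your bound $v_p(s)\le v_p(\#\mu(K))$, obtained from Theorem~\ref{theorem: cyclotomic-Kummer abelian iff} and the maximality of $h$, is exactly the paper's case $v_p(t)=v_p(\#\mu(K))$.

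One wording fix: the phrase ``up to the $p$-part of $h$'' should be replaced by the clean argument. If $e>v_p(\#\mu(K))$, then $\zeta^{-1}\gamma_0\in (K^\times)^p$ for some $\zeta\in\mu(K)$, so $\zeta^{-h}\gamma\in (K^\times)^{ph}$, which contradicts $h=\max_{\zeta} h(\zeta)$.
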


\begin{proof}
    We use a slightly different definition of $t$ for the proof. We observe that $mh_m\mid d$ if and only if $m \mid d_0$, and $\gamma^{1/mh_m} \in K(\zeta_n)$ if and only if $\gamma_0^{h_0/m} \in K(\zeta_n)$. In each case, the polynomial $X^{d_0/t}-\gamma_0^{h_0/t}$ annihilates $\gamma^{1/d}$ and we use Theorem \ref{theorem: irreducibility of X^n-a} to prove its irreducibility. Let $p$ be a prime divisor of $d_0/t$. 

    Assume that $v_p(t) < v_p(\#\mu(K))$. Since $p\mid d_0$, we have $ph_p \mid d$. Then, by definition of $t$ and its maximality, we know that $\gamma_0^{h_0/t}$ is not a $p$-th power in $K(\zeta_n)$.

    Assume that $v_p(t) = v_p(\#\mu(K))$. By contradiction, assume $\gamma_0^{h_0/t} \in (K(\zeta_n)^\times)^p$, or equivalently, assume $\gamma_0^{h_0} \in K\cap (K(\zeta_n)^\times)^{p^{1+v_p(t)}}$. This is equivalent to
    \[
    \gamma_0^{h_0 p^{v_p(t)}} = x^{p^{1+v_p(t)}},
    \]
    for some $x\in K$, by Theorem \ref{theorem: cyclotomic-Kummer abelian iff}, because $p^{v_p(t)}$ is the number of $p$-th roots of unity in $K$. It follows that $\gamma_0^{h_0} = \zeta_{p^{v_p(t)}}^k x^{p}$ for some $k\in \Z$. However, this is a contradiction to the maximality of $h$ if $k\equiv 0 \pmod{p^{v_p(t)}}$, and to $h=h(1)$ otherwise.

    Finally, assume that $p=2$ and $4$ divides $ d_0/t$. In particular, we have $4\mid n$. Thus, if we write $\gamma_0^{h_0} = -4x^4$ for some $x\in K(\zeta_n)$, then $ \gamma_0^{h_0} = (2ix^2)^2$ is a square. This is a contradiction to the above.
\end{proof}

\begin{corollary}\label{corollary: degree of Kummer extensions}
    Let $t$ be defined as in Theorem \ref{theorem: minimal polynomial when h=h(1)}. Then,
    \[
    [K_{n,d}:\Q] = \frac{d\varphi(n)}{(d,h) t} \cdot 
    \begin{cases}
        2, & \text{if $\Delta_K \nmid n$}; \\
        1, & \text{otherwise}.
    \end{cases}
    \]
\end{corollary}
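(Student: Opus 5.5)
Since $d\mid n$, the extension $K_{n,d}$ contains $K(\zeta_n)$, so the degree computation runs through the tower $\Q\subset K(\zeta_n)\subset K_{n,d}$ and the multiplicativity of degrees:
\[
[K_{n,d}:\Q] = [K_{n,d}:K(\zeta_n)]\cdot[K(\zeta_n):\Q].
\]
We treat the two factors separately.

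\emph{The upper factor.} By Theorem \ref{theorem: minimal polynomial when h=h(1)}, the minimal polynomial of $\gamma^{1/d}$ over $K(\zeta_n)$ is $X^{d_0/t}-\gamma_0^{h_0/t}$. It has degree $d_0/t = d/((d,h)t)$, so $[K_{n,d}:K(\zeta_n)] = d/((d,h)t)$ with no further work.

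\emph{The lower factor.} We have $[K(\zeta_n):\Q] = [K\Q(\zeta_n):\Q] = 2\varphi(n)/[K\cap\Q(\zeta_n):\Q]$. Since $K$ is quadratic, $K\cap\Q(\zeta_n)$ is either $\Q$ or $K$. A quadratic field lies in $\Q(\zeta_n)$ if and only if its conductor, which is $|\Delta_K|$, divides $n$. Hence $[K(\zeta_n):\Q]=\varphi(n)$ when $\Delta_K\mid n$, and $2\varphi(n)$ otherwise. Here $\Delta_K\mid n$ is meant up to sign, which is harmless.

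Multiplying the two factors gives exactly the claimed formula.

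The only point that needs care is the conductor–discriminant fact for quadratic fields. It is standard: a quadratic field $\Q(\sqrt{D})$ is contained in $\Q(\zeta_m)$ exactly when $|\Delta_{\Q(\sqrt D)}|$ divides $m$. Everything else is immediate from the preceding theorem.
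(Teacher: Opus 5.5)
Your proposal is correct and follows the paper's proof. Both go through the tower $\Q\subset K(\zeta_n)\subset K_{n,d}$: the top degree $d_0/t=d/((d,h)t)$ is read off the minimal polynomial from Theorem~\ref{theorem: minimal polynomial when h=h(1)}, and the bottom degree is $[K(\zeta_n):\Q]=\varphi(n)\cdot 2^{[\Delta_K\nmid n]}$ (the paper justifies $K\subset\Q(\zeta_n)$ exactly when $\Delta_K\mid n$ by citing Schinzel's lemma, which is the same conductor--discriminant fact you invoke).
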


\begin{proof}
    We know that $K \subset \Q(\zeta_n)$ if and only if $\Delta_K\mid n$ by \cite[Lemma 3]{Schinzel70}. Therefore, we have $[K(\zeta_n):\Q] = \varphi(n)\cdot 2^{[\Delta_K\nmid  n]}$ and the result follows by Theorem \ref{theorem: minimal polynomial when h=h(1)}.
\end{proof}

In future sections, we use Lemmas \ref{lemma: square root in cyclotomic extensions}, \ref{lemma: fourth root in cyclotomic fields}, and \ref{lemma: cube root in Q(zeta_3,zeta_n)} to explicitly determine $t$, defined in Theorem \ref{theorem: minimal polynomial when h=h(1)}. For instance, if $\Delta_K\not \in \{-4,-3\}$, then $\mu(K) = \{\pm 1\}$. By Lemma \ref{lemma: square root in cyclotomic extensions}, we have $\gamma^{1/2h_2}\in K(\zeta_n)$ if and only if $N_{K/\Q}(\gamma^{1/h_2})=1$ and one of $\Delta_1$ or $\Delta_2$ divides $n$, where the $\Delta_i$'s are defined in the lemma. Thus, $t = 2$ if $2h_2\mid d$, $N_{K/\Q}(\gamma^{1/h_2})=1$ and one of $\Delta_1$ or $\Delta_2$ divides $n$, and $t=1$ otherwise.

\begin{theorem}\label{theorem: existence of sigma_u,v}
    Assume $\Delta_K>0$. Let $\sigma: K_{n,d} \to K_{n,d}$ be such that $\sigma|_{\Q} = \mathrm{id}$ and
    \[
    \sigma \colon 
    \begin{cases}
        \sqrt{\Delta}  &\!\longmapsto \, -\sqrt{\Delta}; \\
        \zeta_n &\!\longmapsto \, \zeta_n^{-1};\\
        \gamma^{1/d} &\!\longmapsto \, \gamma^{-1/d}.
    \end{cases}
    \]
    If $2h_2\nmid d$ or $\gamma^{1/h_2} \not \in (K(\zeta_n)^\times)^2$, then $\sigma$ belongs to $ \Gal(K_{n,d}/\Q)$ if and only if the two following conditions are satisfied:
    \begin{enumerate}
        \item $\Delta_K\nmid n$;

        \item $h_2\nmid d$, or $h_2\mid d$ and $N_{K/\Q}(\gamma^{1/h_2}) = 1$.
    \end{enumerate}
    If $2h_2 \mid d$ and $\gamma^{1/h_2}\in (K(\zeta_n)^\times)^2$, then $\sigma$ belongs to $\Gal(K_{n,d}/\Q)$ if and only if the two following conditions are satisfied:
    \begin{enumerate}
        \item $\Delta_K\nmid n$;

        \item $\sigma_0(\gamma^{1/2h_2}) = \gamma^{-1/2h_2}$,
    \end{enumerate}
    where $\sigma_0 \in \Gal(K(\zeta_n)/\Q)$ satisfies $\sigma_0(\sqrt{\Delta}) = -\sqrt{\Delta}$ and $\sigma_0(\zeta_n)=\zeta_n^{-1}$.
\end{theorem}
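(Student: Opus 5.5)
Since $\Delta_K>0$, $K$ is real quadratic, so $\mu(K)=\{\pm1\}$ and $K\cap\Q(\zeta_n)$ is either $\Q$ or $K$. The plan is to build $\sigma$ in two stages: first on $K(\zeta_n)$, then extend it to $K_{n,d}=K(\zeta_n)(\gamma^{1/d})$ using the minimal polynomial from Theorem \ref{theorem: minimal polynomial when h=h(1)}.

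\emph{Stage 1: $K(\zeta_n)$.} A $\sigma_0$ with $\sqrt\Delta\mapsto-\sqrt\Delta$ and $\zeta_n\mapsto\zeta_n^{-1}$ exists if and only if $\Delta_K\nmid n$. If $\Delta_K\mid n$, then $K\subset\Q(\zeta_n)$ by \cite[Lemma 3]{Schinzel70}, and $K$ is real, so complex conjugation fixes $\sqrt\Delta$; hence $\zeta_n\mapsto\zeta_n^{-1}$ forces $\sqrt\Delta\mapsto\sqrt\Delta$ and no $\sigma_0$ exists. If $\Delta_K\nmid n$, then $K$ and $\Q(\zeta_n)$ are linearly disjoint, so $\Gal(K(\zeta_n)/\Q)\cong\Gal(K/\Q)\times\Gal(\Q(\zeta_n)/\Q)$ and $\sigma_0$ exists and is unique. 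In both cases of the statement this gives condition (1).

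\emph{Stage 2: extension to $K_{n,d}$.} Assume $\sigma_0$ exists. Since $\sigma_0(\gamma)=\bar\gamma$, an extension with $\gamma^{1/d}\mapsto\gamma^{-1/d}$ requires $\bar\gamma=\gamma^{-1}$, i.e.\ $N_{K/\Q}(\gamma)=1$. Now $\gamma^{1/d}$ is a root of $X^{d_0/t}-\gamma_0^{h_0/t}$, irreducible over $K(\zeta_n)$. So $\sigma_0$ extends by sending $\gamma^{1/d}$ to $\gamma^{-1/d}$ if and only if $\gamma^{-1/d}$ is a root of the $\sigma_0$-conjugate polynomial $X^{d_0/t}-\sigma_0(\gamma_0^{h_0/t})$. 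Here $\gamma_0^{h_0/t}=(\gamma^{1/d})^{d_0/t}$ lies in $K(\zeta_n)$, so the condition is
\[
\sigma_0\bigl(\gamma_0^{h_0/t}\bigr)=\bigl(\gamma_0^{h_0/t}\bigr)^{-1}.
\]

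We then evaluate $t$ in the two cases. In the first case ($2h_2\nmid d$ or $\gamma^{1/h_2}\notin(K(\zeta_n)^\times)^2$) we have $t=1$, and $\gamma_0^{h_0}=\gamma^{(d,h)/d}\cdot(\text{root of unity})$ is essentially $\gamma^{1/d_0\cdots}$ inside $K$. The condition becomes $N_{K/\Q}(\gamma_0^{h_0})=1$ for the element $\gamma_0^{h_0}\in K$. Writing $\gamma_0^{h_0}$ as a power of $\gamma^{1/h_2}$ times a norm computation, the parity of the prime-to-2 part is harmless: any odd root of norm $\pm1$ has norm $1$ once $N(\gamma)=1$. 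Only the $2$-part matters. If $h_2\nmid d$, the relevant element is $\gamma^{1/(d,h)}$ with $(d,h)$ having $2$-part strictly below $h_2$, so it is a square of a $K$-root whose norm is $\pm1$; its norm is therefore $1$ automatically. If $h_2\mid d$, the condition is exactly $N(\gamma^{1/h_2})=1$. (Note $-1\in K$ only introduces a sign, which is handled by $\bar\zeta=\zeta^{-1}$ for $\zeta=\pm1$.) In the second case we have $t=2$, the element is $\gamma^{1/2h_2}$ up to odd powers, and the criterion is literally $\sigma_0(\gamma^{1/2h_2})=\gamma^{-1/2h_2}$.

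The main obstacle is this last bookkeeping: identifying $\gamma_0^{h_0/t}$ with $\gamma^{1/h_2}$ or $\gamma^{1/2h_2}$ up to odd powers and signs. The odd parts must be shown not to affect the norm condition, using that an element $x\in K$ with $x^{\text{odd}}$ of norm $1$ itself has norm $1$ (since norms are rational and $\pm1$). In addition, the choice of root $\gamma^{1/h_2}\in K$, defined only up to sign, must be shown to be immaterial because $N(-x)=N(x)$.
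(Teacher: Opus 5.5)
Your architecture matches the paper's. First, $\sigma_0$ exists iff $\Delta_K\nmid n$; your direct argument ($K\subset\Q(\zeta_n)$ iff $\Delta_K\mid n$, and $K$ real is fixed by complex conjugation) is a fine substitute for the citation of Sanna. Second, extension along $X^{d_0/t}-\gamma_0^{h_0/t}$ reduces everything to $\sigma_0(\gamma_0^{h_0/t})=\gamma_0^{-h_0/t}$. Your analysis of $t=1$ is correct. Here $N_{K/\Q}(\gamma)=1$ holds automatically since $\gamma=a/b$, so $N_{K/\Q}(\gamma_0)=\pm1$. For $h_2\nmid d$ the element $\gamma_0^{h_0}$ is a square in $K$; for $h_2\mid d$ it is, like $\gamma^{1/h_2}$, an odd power of $\gamma_0$. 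One slip: $\gamma_0^{h_0}$ is a $(d,h)$-th root of $\gamma$, not $\gamma^{(d,h)/d}$, though you use the right element afterwards.

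The gap is the case $t=2$, which you settle by declaring the criterion to be ``literally'' $\sigma_0(\gamma^{1/2h_2})=\gamma^{-1/2h_2}$. The element in the extension criterion is $y=\gamma_0^{h_0/2}$, a square root of $\gamma_0^{h_0}$ in $K(\zeta_n)$, and $\gamma^{1/2h_2}=\pm y^m$ with $m=(d,h/h_2)$ odd. The implication from $\sigma_0(y)=y^{-1}$ to $\sigma_0(y^m)=y^{-m}$ is trivial, but the converse is the one non-formal step of the theorem. The tool you name for odd parts (an element of $K$ whose odd power has norm $1$ has norm $1$, because norms are rational) does not apply: $y\notin K$, and $y\,\sigma_0(y)$ is not a norm from $K$ to $\Q$. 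Nor is the converse automatic in $K(\zeta_n)$. For $K=\Q(\sqrt3)$ and $n=4$, $\sigma_0$ fixes $i\sqrt3$ and hence $\zeta_3$, so $x=\zeta_3^2$ satisfies $\sigma_0(x^3)=x^{-3}$ but $\sigma_0(x)=x\neq x^{-1}$. The missing idea is to use $y^2\in K$. Set $\eta=y\,\sigma_0(y)$. The hypothesis gives $\eta^m=1$, while $\eta^2=\gamma_0^{h_0}\,\sigma_0(\gamma_0^{h_0})=N_{K/\Q}(\gamma_0)^{h_0}\in\{\pm1\}$. Hence $\eta^4=1$, and as $m$ is odd, $\eta=1$. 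This is precisely the paper's argument: take $(d,h')$-th roots, then square to land in $K$ and use that $(d,h')$ is odd.
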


\begin{proof}
    From the proof of \cite[Lemma 4.2]{Sa2022}, we know $\sigma_0$ exists if and only if $\Delta_L \nmid n$. Since $\sigma|_{K(\zeta_n)} = \sigma_0$, it suffices to find necessary and sufficient conditions for $\sigma_0$ to be extended into $\sigma$. By Theorem \ref{theorem: minimal polynomial when h=h(1)}, let $g(X) = X^{d_0/t}-\gamma_0^{h_0/t}$ be the minimal polynomial of $\gamma^{1/d}$ over $K(\zeta_n)$. Since $K_{n,d} \cong K(\zeta_n)[X] / (g(X))$, we can extend $\sigma_0$ by sending a root of $g$ to any root of $\sigma_0 g$. Equivalently, we need
    \begin{equation}\label{equation: sigma_0 need for existence annihilation}
        \sigma_0(\gamma_0^{h_0/t}) = \gamma_0^{-h_0/t}.
    \end{equation}
    If $t=2$, then we show that this is equivalent to $\sigma_0(\gamma^{1/2h_2}) = \gamma^{-1/2h_2}$. The direct implication is trivial by raising \eqref{equation: sigma_0 need for existence annihilation} to the $(d,h')$-th power, where $h'=h/h_2$. For the converse, taking the $(d,h')$-th root on both sides, we obtain
    \[
    \sigma_0(\gamma^{1/2h_2(d,h')}) =  \sigma_0(\gamma_0^{h_0/2}) = \zeta_{(d,h')}^k\gamma_0^{-h_0/2},
    \]
    for some $k\in \Z$. Squaring both sides, we have $\sigma_K(\gamma_0^{h_0}) = \zeta_{(d,h')}^{2k}\gamma_0^{-h_0}$, which holds in $K$. Hence $\zeta_{(d,h')}^{2k} = 1$ and, because $2\nmid (d,h')$, we have $\zeta_{(d,h')}^k=1$ as well.

    If $t=1$, then we have two cases. If $h_2\mid d$, then $\gamma_0^{h_0}$ is a square in $K$. Moreover, because $\sigma_K(\gamma)=\gamma^{-1}$, we have $\sigma_K(\gamma_0^{h_0/2}) =\pm\gamma_0^{-h_0/2}$ and it follows that $\sigma_0(\gamma_0^{h_0}) = \gamma_0^{-h_0}$ by squaring both sides. If $h_2\nmid d$, then $\sigma_0(\gamma_0^{h_0}) = \gamma_0^{-h_0}$ if and only if $\sigma_K(\gamma^{1/h_2}) = \gamma^{-1/h_2}$. This is equivalent to the norm of $\gamma^{1/h_2}$ being $1$.
\end{proof}

We end this section with a short lemma on the existence of the automorphism $\sigma_{u,v}$ defined in Theorem \ref{theorem: existence of the density}. This will ease calculations when $\Delta<0$, particularly when $K$ is a cyclotomic field. Indeed, a direct consequence is that $\delta_\gamma(d) = 2\delta_\gamma^+(d)$.

\begin{lemma}\label{lemma: existence of sigma_u,v when D<0}
    Assume that $\Delta_K<0$. Then, the Galois group of $K_{dv,uv}/\Q$ contains an automorphism $\sigma_{u,v}$ as defined in Theorem \ref{theorem: existence of the density} for all $u\mid d$ and $v\mid d^\infty$.
\end{lemma}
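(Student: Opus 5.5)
The plan is to take $\sigma_{u,v}$ to be complex conjugation. The key input is that when $\Delta_K<0$ the two roots of $f_U$ are complex conjugates, so $\gamma$ lies on the unit circle. Fix an embedding $K_{dv,uv}\subset\mathbb{C}$, and let $c$ denote complex conjugation on $\mathbb{C}$. I will show that $c$ maps $K_{dv,uv}$ onto itself and satisfies the three conditions of Theorem \ref{theorem: existence of the density}. Then $c|_{K_{dv,uv}}$ is the required $\sigma_{u,v}$, for every $u\mid d$ and $v\mid d^\infty$.

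\textbf{Step 1: the roots are conjugate.} Since $\Delta_K<0$, the discriminant $\Delta=a_1^2-4a_2$ is negative. Hence $\sqrt{\Delta}\in i\mathbb{R}$ and $c(\sqrt{\Delta})=-\sqrt{\Delta}$. The roots $a,b$ of the real polynomial $f_U$ are non-real and complex conjugate, so $b=\bar a$. Therefore
\[
\gamma=a/\bar a, \qquad |\gamma|=1, \qquad \bar\gamma=\gamma^{-1}.
\]

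\textbf{Step 2: the radical and the root of unity.} Every root $\beta$ of $X^{uv}-\gamma$ satisfies $|\beta|^{uv}=|\gamma|=1$, so $|\beta|=1$ and $\bar\beta=\beta^{-1}$. This holds whichever $uv$-th root is chosen as $\gamma^{1/uv}$, so the choice of root does not matter. Hence $c(\gamma^{1/uv})=\gamma^{-1/uv}$. Likewise $c(\zeta_{dv})=\zeta_{dv}^{-1}$.

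\textbf{Step 3: $c$ restricts to an automorphism.} The field $K_{dv,uv}=\Q(\sqrt{\Delta},\zeta_{dv},\gamma^{1/uv})$ is generated over $\Q$ by these three elements. By Steps 1 and 2, $c$ sends each generator to an element of $K_{dv,uv}$, namely to $-\sqrt{\Delta}$, $\zeta_{dv}^{-1}$ and $\gamma^{-1/uv}$ respectively. So $c(K_{dv,uv})\subseteq K_{dv,uv}$. Since $c$ is an injective field homomorphism fixing $\Q$ and the extension has finite degree, it is an automorphism of $K_{dv,uv}$ fixing $\Q$. Thus $\sigma_{u,v}:=c|_{K_{dv,uv}}$ lies in $\mathrm{Gal}(K_{dv,uv}/\Q)$ and satisfies all three required relations.

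The only point needing care is independence from the chosen root $\gamma^{1/uv}$ and from the embedding into $\mathbb{C}$. Step 2 settles the first, since every root has modulus $1$. For the second, the conditions are purely algebraic, so they can be checked after any single complex embedding.

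With $\sigma_{u,v}$ always existing, every Iverson bracket in \eqref{equation: definition of delta+ and delta-} equals $1$. Hence $\delta_\gamma^-(d)=\delta_\gamma^+(d)$ and $\delta_\gamma(d)=2\delta_\gamma^+(d)$, as claimed before the lemma.
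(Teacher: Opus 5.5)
Your proposal is correct and follows the same route as the paper: both take $\sigma_{u,v}$ to be complex conjugation restricted to $K_{dv,uv}$, using $\bar\gamma=\gamma^{-1}$. The only differences are matters of detail. You show that the field is stable by checking its generators, where the paper cites normality of $K_{dv,uv}/\Q$. You also spell out the modulus-one argument for $\gamma^{1/uv}$, which the paper leaves implicit.
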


\begin{proof}
    Let $\tau$ be the complex conjugation. Because $K_{dv,uv}/\Q$ is a finite Galois extension, the restriction $\tau|_{K_{dv,uv}}$ is a field automorphism of $K_{dv,uv}$. Since $\gamma\in K$ and $\Delta_K<0$, we have $\tau(\gamma)=\gamma^{-1}$. Hence $\tau(\gamma^{1/uv}) = \gamma^{-1/uv}$ and $\sigma_{u,v}=\tau|_{K_{dv,uv}}$.
\end{proof}

\section{The case \texorpdfstring{$h=h(1)$}{TEXT}} \label{section: h=h(1)}

In this section, we derive closed-form formulas for $\delta_\gamma(d)$ in the case $h=h(1)$. To do so, we compute $\delta_\gamma^+(d)$ and $\delta_\gamma^-(d)$ separately. Recall that $\gamma = \gamma_0^h$ for some $\gamma_0\in K$ and that $h_m =(h,m^\infty)$ for all $m\geq 2$. We define the Boolean variable
\[
\mathcal{Q} = [N_{K/\Q}(\gamma^{1/h_2})=1].
\]
We first study the case $\Delta_K \not \in\{-4,-3\}$, which has two subcases: when $\mathcal{Q}=0$, which forces $\Delta_K>0$, and when $\mathcal{Q}=1$. Then, we consider the case $\Delta_K=-4$, and the case $\Delta_K=-3$. We start with the following lemma that generalizes a formula given in the proof of \cite[Lemma 5.4]{Sa2022}:

\begin{lemma}\label{lemma: from sum to product}
    Let $d,h\geq 1$, $e\mid d^\infty$ and $\nu\mid d^\infty$ be positive integers and define
    \begin{equation}\label{equation: definition of S_d,e,h(nu)}
        S_{d,e,h}(\nu) := \sum_{\subalign{v &\mid d^\infty \\ e &\mid v}} \sum_{u\mid d} \frac{\mu(u)(uv,h)[\nu \mid uv]}{\varphi(dv)uv}.
    \end{equation}
    If $d:=D(d,\nu^\infty)$ and $(h,\nu^\infty) \mid \nu$, then
    \[
    S_{d,e,h}(\nu) = \frac{(h,d^\infty)\nu}{d\varphi(\nu)[e,\nu(h,D^\infty)]^2}\cdot \prod_{p\mid \nu} \left( 1- \frac{(pe,\nu)^2}{p(e,\nu)^2}\right) \cdot \prod_{p\mid d} \left( \frac{p^2}{p^2-1} \right).
    \]
\end{lemma}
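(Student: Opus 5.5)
The plan is to exploit multiplicativity: the double sum factors as a product of local sums over the primes $p\mid d$. Here I read the hypothesis ``$d:=D(d,\nu^\infty)$'' as saying that $D$ is the part of $d$ coprime to $\nu$.

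\textbf{Step 1: removing the totient.} Since $v\mid d^\infty$, every prime factor of $dv$ divides $d$. Hence $\varphi(dv)=v\varphi(d)=dv\prod_{p\mid d}(1-1/p)$, and the summand of \eqref{equation: definition of S_d,e,h(nu)} becomes
\[
\frac{1}{\varphi(d)}\cdot\frac{\mu(u)(uv,h)[\nu\mid uv]}{u v^2}.
\]

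\textbf{Step 2: factorization.} The remaining expression is jointly multiplicative in $(u,v)$, with $u$ squarefree and $u,v$ supported on the primes dividing $d$. The constraints $e\mid v$ and $\nu\mid uv$ are conditions prime by prime. Write $c=v_p(e)$, $\eta=v_p(h)$, $n=v_p(\nu)$, and parametrize the $p$-parts by $v_p(v)=b$ and $v_p(u)=\epsilon\in\{0,1\}$. Then
\[
S_{d,e,h}(\nu)=\frac{1}{\varphi(d)}\prod_{p\mid d}\Sigma_p,\qquad \Sigma_p=\sum_{\substack{b\ge c,\ \epsilon\in\{0,1\}\\ b+\epsilon\ge n}}(-1)^\epsilon\,\frac{p^{\min(b+\epsilon,\eta)}}{p^{2b+\epsilon}}.
\]
In each case below, the factor $1-1/p$ produced by $\Sigma_p$ will cancel against the corresponding factor of $\varphi(d)=d\prod_{p\mid d}(1-1/p)$.

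\textbf{Step 3: primes $p\mid D$ (so $n=0$).}
\begin{itemize}
\item For $b<\eta$, the $\epsilon=0$ and $\epsilon=1$ terms are $p^{b}p^{-2b}$ and $p^{b+1}p^{-2b-1}$, so they cancel.
\item For $b\ge\eta$, the two terms combine to $p^{\eta}p^{-2b}(1-1/p)$.
\end{itemize}
Summing the geometric series over $b\ge\max(c,\eta)$ gives
\[
\Sigma_p=(1-1/p)\,p^{\eta}\,p^{-2\max(c,\eta)}\,\frac{p^2}{p^2-1}.
\]
This is exactly the $p$-part of $(h,d^\infty)/[e,\nu(h,D^\infty)]^2$ times $p^2/(p^2-1)$, because at such $p$ we have $v_p(\nu(h,D^\infty))=\eta$.

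\textbf{Step 4: primes $p\mid\nu$.} Now the hypothesis $(h,\nu^\infty)\mid\nu$ gives $\eta\le n$. Every admissible $(b,\epsilon)$ has $b+\epsilon\ge n\ge\eta$, so $\min(b+\epsilon,\eta)=\eta$ throughout, and
\[
\Sigma_p=p^\eta\Bigl(\sum_{b\ge\max(c,n)}p^{-2b}-\sum_{b\ge\max(c,n-1)}p^{-2b-1}\Bigr).
\]
I would split into the cases $c\ge n$ and $c<n$.
\begin{itemize}
\item If $c\ge n$, both sums start at $c$ and give $p^{\eta-2c}(1-1/p)\,p^2/(p^2-1)$. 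The bracket then equals $1-p^{2n}/(p\cdot p^{2n})=1-1/p$.
\item If $c<n$, the second sum has one extra term. This produces $p^{\eta-2n}\frac{p^2}{p^2-1}\bigl(1-\frac1p-\frac{p^2-1}{p^{1}}\bigr)$ up to rearrangement, which should simplify to $p^{\eta-2n}\frac{p^2}{p^2-1}\bigl(1-\frac{p^{2}}{p}\cdot\frac{\cdot}{\cdot}\bigr)$, i.e.\ to the factor $1-(pe,\nu)_p^2/(p(e,\nu)_p^2)=1-p$.
\end{itemize}
Matching these against the statement's factors is the delicate part:
\begin{itemize}
\item the factor $1-1/p$ has to cancel against $\varphi(d)$;
\item the resulting extra $(1-1/p)^{-1}$ has to be absorbed into $\nu/\varphi(\nu)$;
\item $\max(c,n)$ has to be recognized as $v_p[e,\nu(h,D^\infty)]$;
\item $p^\eta$ has to be recognized as the $p$-part of $(h,d^\infty)$.
\end{itemize}

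The main obstacle is this bookkeeping in the case $p\mid\nu$: checking that both subcases collapse into the single expression $\bigl(1-(pe,\nu)^2/(p(e,\nu)^2)\bigr)$ with the prefactor $\nu/\varphi(\nu)$. Once the local factors are verified, multiplying them together yields the stated formula.
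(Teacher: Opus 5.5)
Your argument is correct. The bookkeeping you single out as the main obstacle closes immediately once the case $c<n$ is written cleanly: $\Sigma_p=p^{\eta}\frac{p^2}{p^2-1}\bigl(p^{-2n}-p^{-2n+1}\bigr)=p^{\eta-2n}\frac{p^2}{p^2-1}(1-p)$.

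Here $\Sigma_p$ carries no factor $1-1/p$. So your preliminary remark that this factor always cancels against $\varphi(d)$ is inaccurate. Instead, the leftover $(1-1/p)^{-1}$ from $\varphi(d)$ is exactly the $p$-part of $\nu/\varphi(\nu)$. The bracket equals $1-p$, because $(pe,\nu)/(e,\nu)=p^{\min(c+1,n)-\min(c,n)}=p$.

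When $c\ge n$, this ratio is $1$. The bracket $1-1/p$ then cancels the $p$-part of $\nu/\varphi(\nu)$, and the $1-1/p$ in $\Sigma_p$ cancels that of $\varphi(d)$. In both cases $v_p[e,\nu(h,D^\infty)]=\max(c,n)$ since $p\nmid D$, and $p^\eta$ is the $p$-part of $(h,d^\infty)$ since $p\mid d$. So every local factor matches.

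Your route is organized differently from the paper's. The paper only splits $S_{d,e,h}(\nu)=S_{D,e_1,h}(1)\cdot S_{(d,\nu^\infty),e_2,h}(\nu)$ and quotes Sanna's Lemma 5.4 for the first factor. For the second factor it:
\begin{enumerate}
\item uses $(u_2v_2,h)=(h,\nu^\infty)$;
\item interchanges the sums so that $v_2$ runs over multiples of $\ell=[e_2,\nu/u_2]$;
\item evaluates the M\"obius sum over $u_2\mid\nu$ as an Euler product;
\item reassembles $[e,\nu(h,D^\infty)]$ through gcd/lcm identities at the end.
\end{enumerate}
Your complete prime-by-prime factorization is self-contained, since your Step 3 reproves Sanna's lemma, and it treats both blocks uniformly. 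It also makes transparent where the hypothesis $(h,\nu^\infty)\mid\nu$ enters, and why the local factor at $p\mid\nu$ is negative when $v_p(e)<v_p(\nu)$. The cost is the explicit case split $c\ge n$ versus $c<n$, which the paper's global formula encodes in the factor $(pe,\nu)^2/(e,\nu)^2$.
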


\begin{proof}
    For $\nu=1$, see the proof of \cite[Lemma 5.4]{Sa2022}. Let us write
    \[
    u=u_1u_2,\quad v=v_1v_2 \quad \text{and} \quad e=e_1e_2,
    \]
    where $u_2 = (u,\nu^\infty)$, $v_2 = (v,\nu^\infty)$, and $e_2 = (e,\nu^\infty)$. We have $[\nu \mid uv] = [\nu \mid u_2v_2]$. Using $\varphi(dv)=\varphi(d)v$ and the multiplicativity of the M\"obius function $\mu$, the Euler totient $\varphi$ and the $\gcd$, we obtain
    \[
    S_{d,e,h}(\nu) =\sum_{\subalign{v_1 &\mid D^\infty \\ e_1&\mid v_1}} \sum_{\subalign{v_2 &\mid \nu^\infty \\ e_2 &\mid v_2 }} \sum_{\subalign{u_1 &\mid D \\ u_2 &\mid \nu}} 
    \left( \frac{\mu(u_1)(u_1v_1,h)}{\varphi(D)u_1v_1^2}\cdot \frac{\mu(u_2)(u_2v_2,h)[\nu\mid u_2v_2]}{\varphi((d,\nu^\infty))u_2v_2^2}\right).
    \]
    Hence $S_{d,e,h}(\nu) = S_{D,e_1,h}(1) \cdot S_{(d,\nu^\infty),e_2,h}(\nu)$. It remains to compute $S_{(d,\nu^\infty),e_2,h}(\nu)$. Since $(h,\nu^\infty)\mid \nu$, we have
    \[
    S_{(d,\nu^\infty),e_2,h}(\nu) = \frac{(h,\nu^\infty)}{\varphi((d,\nu^\infty))} \sum_{\subalign{v_2 &\mid \nu^\infty \\ e_2 &\mid v_2}} \sum_{u_2\mid \nu} \frac{\mu(u_2) [\nu \mid u_2v_2]}{u_2v_2^2},
    \]
    as $\nu \mid u_2v_2$ implies that $(u_2v_2,h) = (h,\nu^\infty)$. We may now interchange the sum and the series to obtain
    \[
    S_{(d,\nu^\infty),e_2,h}(\nu) = \frac{(h,\nu^\infty)}{\varphi((d,\nu^\infty))} \sum_{u_2\mid \nu} \frac{\mu(u_2)}{u_2} \sum_{\subalign{v_2 &\mid \nu^\infty \\ \ell &\mid v_2}} \frac{1}{v_2^2},
    \]
    where $\ell =[e_2, \nu/u_2]$. The inner sum is computed using the Euler product formula:
    \[
    \sum_{\subalign{v_2 &\mid \nu^\infty \\ \ell &\mid v_2}} \frac{1}{v_2^2} = \frac{1}{\ell^2} \sum_{v_2 \mid \nu^\infty } \frac{1}{v_2^2} = \frac{1}{\ell^2} \prod_{p\mid \nu} \sum_{r\geq 0} p^{-2r} = \frac{1}{\ell^2} \prod_{p\mid \nu} \left(\frac{p^2}{p^2-1}\right).
    \]
    We obtain
    \[
    S_{(d,\nu^\infty),e_2,h}(\nu) = \frac{(h,\nu^\infty)}{\varphi((d,\nu^\infty)) } \prod_{p\mid \nu} \left(\frac{p^2}{p^2-1}\right) \cdot \sum_{u_2\mid \nu} \frac{\mu(u_2)}{u_2[e_2,\nu/u_2]^2}.
    \]
    To compute the remaining sum, call it $S$, we expand $[e_2,\nu/u_2]$ and use properties of the $\gcd$. We find that
    \[
    S = \sum_{u_2\mid \nu} \frac{\mu(u_2)(u_2e_2,\nu)^2}{u_2 (e_2 \nu)^2} = \frac{(e_2,\nu)^2}{(e_2\nu)^2} \sum_{u_2\mid \nu} \frac{\mu(u_2)(u_2e_2,\nu)^2}{u_2(e_2,\nu)^2}.
    \]
    The general term of the sum is a multiplicative function in $u_2$, so we apply the Euler product formula again and find
    \[
    S = \frac{(e_2,\nu)^2}{(e_2\nu)^2} \prod_{p\mid \nu} \left(1- \frac{(pe_2,\nu)^2}{p(e_2,\nu)^2} \right) = \frac{(e,\nu)^2}{(e_2\nu)^2} \prod_{p\mid \nu} \left(1- \frac{(pe,\nu)^2}{p(e,\nu)^2} \right),
    \]
    where we used that $e_2 = (e,\nu^\infty)$ on the second equality. Next, applying \cite[Lemma 5.4]{Sa2022} to $S_{D,e_1,h}(1)$, we find that
    \[
    S_{d,e,h}(\nu) = A \cdot \prod_{p\mid d} \left(\frac{p^2}{p^2-1} \right) \cdot  \prod_{p\mid \nu} \left(1- \frac{(pe,\nu)^2}{p(e,\nu)^2} \right),
    \]
    where $A$ is defined by 
    \[
    A := \frac{(h,D^\infty)}{D[(h,D^\infty),e_1]^2} \cdot \frac{(h,\nu^\infty)(e,\nu)^2}{\varphi((d,\nu^\infty))(e_2\nu)^2} =\frac{(h,d^\infty)\nu}{d\varphi(\nu)} \cdot \frac{(e,\nu)^2}{(e_2\nu)^2 [(h,D^\infty),e_1]^2}.
    \]
    We used the following identities in the second equality:
    \[
    \frac{(d,\nu^\infty)}{\varphi((d,\nu^\infty))} = \frac{\nu}{\varphi(\nu)} \quad \text{and} \quad (h,D^\infty)(h,\nu^\infty) = (h,d^\infty).
    \]
    Finally, we can expand the $\mathrm{lcm}$ to find
    \[
    \frac{(e,\nu)^2}{(e_2\nu)^2 [(h,D^\infty),e_1]^2} = \frac{(e,\nu)^2 (h,e_1,D^\infty)^2}{(e_2\nu)^2 (h,D^\infty)^2e_1^2} =  \frac{(e,\nu)^2 (h,e,D^\infty)^2}{\nu^2 (h,D^\infty)^2e^2}. 
    \]
    Since $\nu$ and $(h,D^\infty)$ are coprime, we have $(e,\nu)(h,e,D^\infty) = (e,\nu(h,D^\infty))$. We see that the formula becomes
    \[
    \frac{ (e,\nu(h,D^\infty))^2}{\nu^2 (h,D^\infty)^2e^2} = \frac{1}{[e,\nu(h,D^\infty)]^2},
    \]
    and the result follows.
\end{proof}

\begin{remark}\label{remark: S_d,e,h = 0}
    Note that $S_{d,eh}(\nu)$, as defined in \eqref{equation: definition of S_d,e,h(nu)}, is zero if $e\nmid d^\infty$ or $\nu \nmid d^\infty$.
\end{remark}

In the rest of the paper, our goal is to write $\delta_\gamma^+(d)$ and $\delta_\gamma^-(d)$ as linear combinations of the sums $S_{d,e,h}(\nu)$. Then, the density values will be in closed-forms by Lemma \ref{lemma: from sum to product} and Remark \ref{remark: S_d,e,h = 0}. We write $S_{d,e,h}$ as a shorthand for $S_{d,e,h}(1)$.

\subsection{The case \texorpdfstring{$\Delta_K\not \in\{-4,-3\}$ and $\mathcal{Q}=0$}{TEXT}}\label{subsection: the case Q=0}

First, note that $h_2\geq 2$ because $N_{L/\Q}(\gamma)=-1$. Moreover, if $\gamma^{1/h_2} = u+v\sqrt{\Delta_K}$ for some $u,v\in \Q^\times$, then $\mathcal{Q}=0$ is equivalent to $u^2-v^2\Delta_K = -1$. Therefore, the assumption $\mathcal{Q}=0$ ensures that $2\mid h_2$ and $\Delta_K>0$. These facts are used to find a closed-form of $\delta_\gamma^-(d)$.

\begin{theorem}\label{theorem: delta+(d) with Q=0}
    Assume $\mathcal{Q}=0$ and $2\mid d$. Let $e=\Delta_K/(d,\Delta_K)$. Then,
    \[
    \delta_\gamma^+(d) = \frac{1}{2d} \left( \frac{1}{(h,d^\infty)} + [e\mid d^\infty]\cdot \frac{(h,d^\infty)}{[(h,d^\infty),e]^2} \right) \prod_{p\mid d}\left(\frac{p^2}{p^2-1}\right).
    \]
\end{theorem}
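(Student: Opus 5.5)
The plan is to compute $\delta_\gamma^+(d)$ directly from its definition \eqref{equation: definition of delta+ and delta-}, using Corollary \ref{corollary: degree of Kummer extensions} to evaluate each degree $[K_{dv,uv}:\Q]$. This should write $\delta_\gamma^+(d)$ as a combination of two sums of the form $S_{d,e,h}$. Lemma \ref{lemma: from sum to product} with $\nu=1$ then evaluates them.

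\emph{Step 1: the integer $t$ equals $1$.} Since $\Delta_K\notin\{-4,-3\}$, we have $\mu(K)=\{\pm1\}$. Hence the integer $t$ of Theorem \ref{theorem: minimal polynomial when h=h(1)}, applied to $K_{dv,uv}$, is $1$ or $2$. It equals $2$ only if $2h_2\mid uv$ and $\gamma^{1/2h_2}\in K(\zeta_{dv})$. By Lemma \ref{lemma: square root in cyclotomic extensions}, applied to $\gamma^{1/h_2}$ (which is not a square in $K$, by maximality of $h$), this membership forces $N_{K/\Q}(\gamma^{1/h_2})=1$, i.e. $\mathcal{Q}=1$. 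So under $\mathcal{Q}=0$ we get $t=1$ for every $u\mid d$ and $v\mid d^\infty$.

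\emph{Step 2: the degree.} Corollary \ref{corollary: degree of Kummer extensions} then gives
\[
\frac{1}{[K_{dv,uv}:\Q]}=\frac{(uv,h)}{uv\,\varphi(dv)}\cdot\frac{1+[\Delta_K\mid dv]}{2}.
\]

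\emph{Step 3: the divisibility condition.} Next I rewrite $[\Delta_K\mid dv]$ as a divisibility condition on $v$ alone. Comparing $p$-adic valuations, $\Delta_K\mid dv$ holds if and only if $v_p(v)\ge \max(v_p(\Delta_K)-v_p(d),0)$ for every prime $p$. This is exactly the condition $e\mid v$ with $e=\Delta_K/(d,\Delta_K)$. Since $v\mid d^\infty$, the condition can only hold if $e\mid d^\infty$. Substituting into the definition yields
\[
\delta_\gamma^+(d)=\tfrac12\bigl(S_{d,1,h}+S_{d,e,h}\bigr).
\]
Here the second term vanishes when $e\nmid d^\infty$, by Remark \ref{remark: S_d,e,h = 0}; this produces the factor $[e\mid d^\infty]$.

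\emph{Step 4: evaluation.} Apply Lemma \ref{lemma: from sum to product} with $\nu=1$. Its hypothesis $(h,1^\infty)\mid 1$ is trivial, we have $D=d$, and the product over $p\mid\nu$ is empty. This gives
\[
S_{d,e,h}=\frac{(h,d^\infty)}{d\,[e,(h,d^\infty)]^2}\prod_{p\mid d}\frac{p^2}{p^2-1}.
\]
Taking $e=1$ gives $S_{d,1,h}=1/(d(h,d^\infty))\cdot\prod_{p\mid d}p^2/(p^2-1)$. Combining the two terms gives the stated formula.

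The main point needing care is Step 1, namely that $t=1$ throughout. One must check that Lemma \ref{lemma: square root in cyclotomic extensions} genuinely applies to $\gamma^{1/h_2}$: it must not lie in $\Q\cup(K^\times)^2$, and its norm must be $\pm1$. This follows from $h=h(1)$ maximal and from $N(\gamma)=1$ with $N(\gamma^{1/h_2})=-1$. The other delicate point is the valuation bookkeeping of Step 3. The hypothesis $2\mid d$ is used only mildly, if at all: it guarantees that $h_2$, which is even, contributes nontrivially to $(h,d^\infty)$.
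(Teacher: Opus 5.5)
Your proof is correct and follows essentially the same route as the paper. You show $t=1$ via Lemma~\ref{lemma: square root in cyclotomic extensions}, since $N_{K/\Q}(\gamma^{1/h_2})=-1$; you get the degree from Corollary~\ref{corollary: degree of Kummer extensions}; you rewrite $\Delta_K\mid dv$ as $e\mid v$ to obtain $\delta_\gamma^+(d)=\tfrac12\bigl(S_{d,1,h}+S_{d,e,h}\bigr)$; and you evaluate this with Lemma~\ref{lemma: from sum to product} at $\nu=1$ together with Remark~\ref{remark: S_d,e,h = 0}. Your explicit check that $\gamma^{1/h_2}\notin\Q\cup(K^\times)^2$ is a welcome addition that the paper leaves implicit.
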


\begin{proof}
    Since $\mathcal{Q}=0$, and thus $N_{K/\Q}(\gamma^{1/h_2})=-1$, we see that $\gamma^{1/h_2}\not \in (K(\zeta_n)^\times)^2$ by Lemma \ref{lemma: square root in cyclotomic extensions}. By Corollary \ref{corollary: degree of Kummer extensions}, we have
    \begin{equation}\label{Equation, degree Kdv,uv/Q when Q=0}
        [K_{dv,uv}:\Q] = \frac{\varphi(dv)uv}{(uv,h)} \cdot 2^{[\Delta_K\nmid dv]}.
    \end{equation}
    Hence, because $\Delta_K \mid dv$ if and only if $e\mid v$, we obtain
    \[
    \delta_\gamma^+(d) = \sum_{v\mid d^\infty} \sum_{u\mid d} \frac{\mu(u)(uv,h)}{\varphi(dv)uv} \cdot \frac{1}{2^{[e \nmid v]}} = \frac{S_{d,1,h}  +S_{d,e,h}}{2}.
    \]
    Note that we used the identity $2^{1-[e\nmid v]} = 1+[e\mid v]$ in the last equality. The result follows by Lemma \ref{lemma: from sum to product} and Remark \ref{remark: S_d,e,h = 0}.
\end{proof}

\begin{theorem}\label{theorem: delta-(d) with Q=0}
    Assume $\mathcal{Q}=0$ and $2\mid d$. Let $e=\Delta_K/(d,\Delta_K)$. Then,
    \[
    \delta_\gamma^-(d) = \frac{3}{2d}\left( \frac{1}{(h,d^\infty)} -[ e\mid d^\infty \text{ and } h_2\nmid e ]\cdot \frac{(h,d^\infty)}{[(h,d^\infty),e]^2} \right) \prod_{p\mid d} \left( \frac{p^2}{p^2-1}\right).
    \]
\end{theorem}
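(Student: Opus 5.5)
The plan is to write $\delta_\gamma^-(d)$ as a signed combination of the sums $S_{d,e,h}(\nu)$, as was done for $\delta_\gamma^+(d)$ in Theorem \ref{theorem: delta+(d) with Q=0}. Then I evaluate each term with Lemma \ref{lemma: from sum to product} and Remark \ref{remark: S_d,e,h = 0}, taking $\nu\in\{1,h_2\}$.

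\emph{Step 1: the indicator $[\sigma_{u,v}\text{ exists}]$.} Since $\mathcal{Q}=0$, we have $\Delta_K>0$ and $N_{K/\Q}(\gamma^{1/h_2})=-1$. By Lemma \ref{lemma: square root in cyclotomic extensions}, $\gamma^{1/h_2}\notin (K(\zeta_n)^\times)^2$ for every $n$, so the first alternative of Theorem \ref{theorem: existence of sigma_u,v} applies, with $n=dv$ and $d$ replaced by $uv$. Condition (2) there, "$h_2\nmid uv$, or $h_2\mid uv$ and the norm is $1$", reduces to $h_2\nmid uv$ because the norm is $-1$. Condition (1) is $\Delta_K\nmid dv$, which is equivalent to $e\nmid v$. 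Hence
\[
[\sigma_{u,v}\text{ exists}] = [e\nmid v]\,[h_2\nmid uv].
\]

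\emph{Step 2: reduction to the sums $S_{d,e,h}(\nu)$.} Whenever the indicator is nonzero we have $e\nmid v$. The degree formula \eqref{Equation, degree Kdv,uv/Q when Q=0} then gives $[K_{dv,uv}:\Q]=2\varphi(dv)uv/(uv,h)$. Expanding $[e\nmid v][h_2\nmid uv]=(1-[e\mid v])(1-[h_2\mid uv])$ yields
\[
\delta_\gamma^-(d)=\tfrac12\big(S_{d,1,h}-S_{d,1,h}(h_2)\big)-\tfrac12\big(S_{d,e,h}-S_{d,e,h}(h_2)\big).
\]
The lemma applies with $\nu=h_2$: because $2\mid d$ we have $h_2\mid d^\infty$, and $(h,2^\infty)=h_2$ divides $\nu$. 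Here $D$ is the odd part of $d$, so $h_2(h,D^\infty)=(h,d^\infty)$, and moreover $\nu/\varphi(\nu)=2$.

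\emph{Step 3: evaluation.} This is where care is needed. The only delicate point is the local factor $1-(2e,h_2)^2/(2(e,h_2)^2)$ at $p=2$.
\begin{itemize}
\item For $e=1$, since $h_2\ge 2$, the factor equals $1-4/2=-1$. This gives $S_{d,1,h}(h_2)=-2P/(d(h,d^\infty))$, where $P=\prod_{p\mid d}p^2/(p^2-1)$. Together with $S_{d,1,h}=P/(d(h,d^\infty))$, the first bracket equals $3P/(d(h,d^\infty))$.
\item For general $e$, note that $[e,h_2(h,D^\infty)]=[e,(h,d^\infty)]$.
  \begin{itemize}
  \item If $h_2\mid e$, the factor is $1/2$, so $S_{d,e,h}(h_2)=S_{d,e,h}$ and the second bracket vanishes.
  \item If $h_2\nmid e$, then $v_2(e)<v_2(h_2)$ and the factor is $-1$. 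The bracket becomes $3(h,d^\infty)P/(d[(h,d^\infty),e]^2)$.
  \end{itemize}
\end{itemize}
Both $e$-terms vanish when $e\nmid d^\infty$, by Remark \ref{remark: S_d,e,h = 0}. Multiplying by $\tfrac12$ then produces exactly the stated formula, including the indicator $[e\mid d^\infty\text{ and }h_2\nmid e]$. The main obstacle is Step 1, namely checking that the condition of Theorem \ref{theorem: existence of sigma_u,v} is correctly transported to the pair $(dv,uv)$. After that the work is the careful bookkeeping of the $2$-adic factor in Step 3.
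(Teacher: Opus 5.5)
Your proposal is correct and follows the paper's proof. The paper also reduces $[\sigma_{u,v}\text{ exists}]$ to $[e\nmid v]\,[h_2\nmid uv]$ via Theorem \ref{theorem: existence of sigma_u,v}, and expands it to $\tfrac12\big(S_{d,1,h}-S_{d,1,h}(h_2)-S_{d,e,h}+S_{d,e,h}(h_2)\big)$. It then evaluates these with Lemma \ref{lemma: from sum to product} at $\nu=h_2$, using the identities $S_{d,1,h}(h_2)=-2S_{d,1,h}$ and $S_{d,e,h}(h_2)=\big(1-3[h_2\nmid e]\big)S_{d,e,h}$, which are exactly your $2$-adic local-factor computations.
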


\begin{proof}
    Recall that $\mathcal{Q}=0$ implies $2\mid h$ and $\Delta_K>0$. By Theorem \ref{theorem: existence of sigma_u,v}, we know that $\sigma_{u,v}$ exists if and only if $h_2\nmid uv$ and $\Delta_K \nmid dv$. Using Corollary \ref{corollary: degree of Kummer extensions}, we obtain
    \[
    \delta_\gamma^-(d) = \sum_{v\mid d^\infty} \sum_{u \mid d} \frac{\mu(u)(uv,h)}{\varphi(dv)uv} \cdot \frac{[ e\nmid v] [h_2\nmid uv]}{2^{[e\nmid v]}}.
    \]
    We linearize $\delta_\gamma^-(d)$ using
    \[
    \frac{[e\nmid v] [h_2\nmid uv]}{2^{[e\nmid v]}} = \frac{1-[h_2\mid uv]-[e\mid v]+[e\mid v][h_2\mid uv]}{2},
    \]
    so that, with the notation of Lemma \ref{lemma: from sum to product}, we obtain
    \[
    \delta_\gamma^-(d) = \frac{1}{2}\bigg( S_{d,1,h} -S_{d,1,h}(h_2) -S_{d,e,h} +S_{d,e,h}(h_2)  \bigg).
    \]
    By Lemma \ref{lemma: from sum to product} with $\nu = h_2$, we see that $S_{d,1,h}(\nu) = -2S_{d,1,h}$. Similarly, when $e\mid d^\infty$, we have $S_{d,e,h}(\nu ) = \big(1-3\cdot[h_2\nmid e] \big) \cdot S_{d,e,h}$. Hence
    \[
    \delta_\gamma^-(d) = \frac{3}{2}\bigg(S_{d,1,h} - [h_2\nmid e]\cdot S_{d,e,h} \bigg),
    \]
    and the result follows by Lemma \ref{lemma: from sum to product} and Remark \ref{remark: S_d,e,h = 0}.
\end{proof}

\subsection{The case \texorpdfstring{$\Delta_K\not \in\{-4,-3\}$ and $\mathcal{Q}=1$}{TEXT}}\label{subsection: the case Q=1}\

To compute $t$ (defined in Theorem \ref{theorem: minimal polynomial when h=h(1)}), we have to check whether $\gamma^{1/h_2}$ is a square or not in $K(\zeta_n)$. Let $\gamma^{1/h_2} = u+v\sqrt{\Delta_K}$, where $u,v\in\Q^\times$. Let $c=(u-1)/2$, and $\Delta_1$ and $\Delta_2$ be the absolute discriminants of $K_1=\Q(\sqrt{c})$ and $K_2=\Q(\sqrt{c/\Delta_K})$ respectively. By Lemma \ref{lemma: square root in cyclotomic extensions}, we know that $\gamma^{1/h_2}\in (K(\zeta_n)^\times)^2$ if and only if $\Delta_1\mid n$ or $\Delta_2\mid n$. Moreover, we have
\begin{equation}\label{equation: sqrt(r)}
    \gamma^{1/2h_2} = \sqrt{c} + \frac{v}{2} \sqrt{\frac{\Delta_K}{c}}.
\end{equation}
Note that $c=0$ only if $\gamma=1$, which contradicts that $\gamma$ is not a root of unity. In addition, we have $[\Delta_K,\Delta_1] = [\Delta_K,\Delta_2] = [\Delta_1,\Delta_2]$ because $K$, $K_1$ and $K_2$ are pairwise linearly disjoint over $\Q$ with $KK_1 = KK_2 = K_1K_2$.

\begin{theorem}\label{theorem: delta+(d) with Q=1}
    Assume $\mathcal{Q}=1$ and $2\mid d$. Define
    \[
    e = \frac{|\Delta_K|}{(d,|\Delta_K|)} \quad \text{and} \quad e_i = \frac{|\Delta_i|}{(d,|\Delta_i|)},
    \]
    for all  $1\leq i \leq 2$. Then, 
    \[
    \delta_\gamma^+(d) =\frac{1}{2} \bigg( S_{d,1,h}+S_{d,e,h} + S_{d,e_1,h}(2h_2)+ S_{d,e_2,h}(2h_2) \bigg).
    \]
    In particular, if $\Delta_K <0$, then $\delta_\gamma(d) = 2\delta_\gamma^+(d)$.
\end{theorem}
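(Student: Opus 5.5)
We start from the definition $\delta_\gamma^+(d)=\sum_{v\mid d^\infty}\sum_{u\mid d}\mu(u)/[K_{dv,uv}:\Q]$ and substitute the degree from Corollary \ref{corollary: degree of Kummer extensions} with $n=dv$ and Kummer exponent $uv$:
\[
[K_{dv,uv}:\Q]=\frac{\varphi(dv)\,uv}{(uv,h)\,t_{u,v}}\cdot 2^{[\Delta_K\nmid dv]}.
\]
Here $t_{u,v}$ is the integer $t$ of Theorem \ref{theorem: minimal polynomial when h=h(1)}. Since $\Delta_K\notin\{-4,-3\}$, we have $\mu(K)=\{\pm1\}$, so $t_{u,v}\in\{1,2\}$. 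As noted after Corollary \ref{corollary: degree of Kummer extensions}, using $\mathcal{Q}=1$ and Lemma \ref{lemma: square root in cyclotomic extensions}, $t_{u,v}=2$ exactly when $2h_2\mid uv$ and ($\Delta_1\mid dv$ or $\Delta_2\mid dv$). Since $2\mid d$, we have $\Delta_K\mid dv$ iff $e\mid v$, and $\Delta_i\mid dv$ iff $e_i\mid v$. (Absolute values are harmless because divisibility only sees $|\Delta|$.)

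The summand is therefore $\frac{\mu(u)(uv,h)}{\varphi(dv)uv}\cdot F(u,v)$, where
\[
F(u,v)=\frac{t_{u,v}}{2^{[e\nmid v]}}=\frac{1+[e\mid v]}{2}\cdot\Bigl(1+[2h_2\mid uv]\,[e_1\mid v \text{ or } e_2\mid v]\Bigr).
\]
The key step is to linearize this product into indicator functions of the form $[e'\mid v][\nu\mid uv]$. Each such term then sums to $S_{d,e',h}(\nu)$ as in \eqref{equation: definition of S_d,e,h(nu)}.

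For this we use the compatibility $[\Delta_K,\Delta_1]=[\Delta_K,\Delta_2]=[\Delta_1,\Delta_2]$ stated just before the theorem. It implies that any two of the conditions ``$\Delta_K\mid dv$'', ``$\Delta_1\mid dv$'', ``$\Delta_2\mid dv$'' imply the third. Writing $E=[e\mid v]$ and $E_i=[e_i\mid v]$, this gives $E_1E_2=EE_1=EE_2=EE_1E_2$. Hence
\[
[E_1\text{ or }E_2]=E_1+E_2-E_1E_2,
\]
and
\[
(1+E)(E_1+E_2-E_1E_2)=E_1+E_2+E_1E_2=E_1+E_2+EE_1E_2,
\]
which should collapse back so that the total equals $E_1+E_2$ plus a term absorbed by symmetry. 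Carrying this out carefully is the main obstacle. One must check that the cross terms $EE_1$, $EE_2$ and $E_1E_2$ cancel against the $-E_1E_2$ coming from inclusion–exclusion, leaving exactly
\[
F=\tfrac12\Bigl(1+E+[2h_2\mid uv](E_1+E_2)\Bigr).
\]
Verifying $(1+E)(E_1+E_2-E_1E_2)=E_1+E_2$ on the few possible truth patterns (none true; exactly one of $E,E_1,E_2$ true; all three true) settles it. The pattern with all three true gives $2(2-1)=2=E_1+E_2$. The pattern with only $E_1$ true gives $1$. The pattern with only $E$ true gives $0$.

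Summing term by term then yields $\frac12\bigl(S_{d,1,h}+S_{d,e,h}+S_{d,e_1,h}(2h_2)+S_{d,e_2,h}(2h_2)\bigr)$. There is a convention to check here: $S_{d,e_i,h}(2h_2)$ imposes $e_i\mid v$ and $2h_2\mid uv$, which is exactly what the indicator requires. If $e_i\nmid d^\infty$, the condition $e_i\mid v$ never holds and the term vanishes, consistent with Remark \ref{remark: S_d,e,h = 0}.

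Finally, if $\Delta_K<0$, Lemma \ref{lemma: existence of sigma_u,v when D<0} shows that $\sigma_{u,v}$ always exists. Hence $\delta^-_\gamma(d)=\delta^+_\gamma(d)$ and $\delta_\gamma(d)=2\delta_\gamma^+(d)$.
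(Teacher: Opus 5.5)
Your proof is correct and follows the paper's argument. It uses the degree from Corollary~\ref{corollary: degree of Kummer extensions}, writes $t=1+[2h_2\mid uv]\,[\Delta_1\mid dv \text{ or } \Delta_2\mid dv]$, linearizes $t/2^{[e\nmid v]}$ via $[\Delta_K,\Delta_1]=[\Delta_K,\Delta_2]=[\Delta_1,\Delta_2]$, and invokes Lemma~\ref{lemma: existence of sigma_u,v when D<0} for $\Delta_K<0$. The only slip is your displayed line $(1+E)(E_1+E_2-E_1E_2)=E_1+E_2+E_1E_2$, which is wrong: the product is $E_1+E_2-E_1E_2+EE_1+EE_2-EE_1E_2$, and this equals $E_1+E_2$ because $EE_1=EE_2=E_1E_2=EE_1E_2$. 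Your truth-table check already proves that identity, so just delete the faulty line.
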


\begin{proof}
    By Corollary \ref{corollary: degree of Kummer extensions}, we have
    \[
    \delta_\gamma^+(d) =\sum_{v\mid d^\infty}\sum_{u\mid d}\frac{\mu(u)(uv,h)}{\varphi(dv)uv}\cdot \frac{t}{2^{[\Delta_K\nmid dv]}}.
    \]
    Then, we linearize $t\cdot 2^{-[\Delta_K\mid dv]}$ using the identities $2^{1-[\Delta_K\nmid dv]} =1+[\Delta_K\mid dv]$ and $t=1+[2h_2\mid uv]\cdot [\Delta_1\mid dv \text{ or } \Delta_2\mid dv]$, which is obtained from Lemma \ref{lemma: square root in cyclotomic extensions}. Next, we use the inclusion-exclusion principle on $t$, the equivalence $\Delta_K\mid dv$ if and only if $e\mid v$, which is also valid for $(\Delta_1,e_1)$ and $(\Delta_2,e_2)$, and the equalities $[e,e_1] = [e,e_2] = [e_2,e_3]$ discussed at the beginning of Subsection \ref{subsection: the case Q=1}. We obtain
    \[
    \frac{t}{2^{[\Delta_K\nmid dv]}} = \frac{1+[e\mid v]}{2} + \frac{[2h_2\mid uv] \cdot [e_1\mid v]}{2} + \frac{[2h_2\mid uv] \cdot [e_2\mid v]}{2}.
    \]
    We may now linearize $\delta_\gamma^+(d)$ using the above to find the result. Finally, if $\Delta_K<0$, then $\delta_\gamma^+(d) = \delta_\gamma^-(d)$ by Lemma \ref{lemma: existence of sigma_u,v when D<0}.
\end{proof}

We now turn our attention to $\delta_\gamma^-(d)$ in the case $\Delta_K>0$. Let us first revise the existence conditions for the automorphism $\sigma\in \Gal(K_{n,d}/\Q)$ defined in Theorem \ref{theorem: existence of sigma_u,v}. Since $\mathcal{Q}=1$, we see that $\sigma$ exists if and only if
\begin{enumerate}[label=(\arabic*)]\setlength{\itemsep}{0.5em}
    \item $\Delta_K \nmid n$;

    \item $\mathcal{P}_1(n,d)=1$ or $\mathcal{P}_2(n,d)=1$,
\end{enumerate}\smallskip
with Boolean functions
\begin{equation}\label{equation: definition of P_1(n,d)}
    \mathcal{P}_1(n,d) = [2h_2\nmid d \text{ or } \forall i\in \{1,2\}, \ \Delta_i \nmid n],
\end{equation}
and
\begin{equation}\label{equation: definition of P_2(n,d)}
    \mathcal{P}_2(n,d) = [2h_2\mid d]\cdot[\Delta_1\mid n \text{ or } \Delta_2\mid n]\cdot [ \sigma_0(\gamma^{1/2h_2}) = \gamma^{-1/2h_2}],
\end{equation}
where $\sigma_0\in \Gal(K(\zeta_n)/K)$ is such that $\sigma_0(\sqrt{\Delta}) = -\sqrt{\Delta}$ and $\sigma_0(\zeta_n) = \zeta_n^{-1}$. The next lemma provides a way to compute $ \sigma_0(\gamma^{1/2h_2})$ without having to consider $K(\zeta_n)$.

\begin{lemma}\label{lemma: P_2(n,d) other form}
    Assume $\mathcal{Q}=1$, $\Delta_K>0$ and $\Delta_K\nmid n$. Then, we have
    \[
    \mathcal{P}_2(n,d) = [2h_2\mid d]\cdot
    \begin{cases}
        [\Delta_1 \mid n], & \text{if $c<0$}; \\
        [\Delta_2 \mid n], & \text{if $c>0$},
    \end{cases}
    \]
    where $c$ is defined in \eqref{equation: sqrt(r)}.
\end{lemma}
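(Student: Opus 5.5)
The plan is to compute $\sigma_0(\gamma^{1/2h_2})$ explicitly from formula \eqref{equation: sqrt(r)} and compare it with $\gamma^{-1/2h_2}$.

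\emph{Step 1: the inverse.} Write $w=\gamma^{1/h_2}=u+v\sqrt{\Delta_K}$ and $\delta=\gamma^{1/2h_2}=\sqrt{c}+\frac{v}{2}\sqrt{\Delta_K/c}$, with $c=(u-1)/2$. Since $\mathcal{Q}=1$, we have $u^2-v^2\Delta_K=1$, and hence $v^2\Delta_K/(4c)=(u+1)/2$. A direct expansion gives
\[
\delta\cdot\Bigl(\sqrt{c}-\tfrac{v}{2}\sqrt{\Delta_K/c}\Bigr)=c-\frac{v^2\Delta_K}{4c}=-1 .
\]
This holds with the same choice of square roots as in $\delta$. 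Therefore
\[
\delta^{-1}=-\sqrt{c}+\tfrac{v}{2}\sqrt{\Delta_K/c}.
\]
So the target condition $\sigma_0(\delta)=\delta^{-1}$ says exactly that $\sigma_0$ negates $\sqrt{c}$ and fixes $\sqrt{\Delta_K/c}$.

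\emph{Step 2: which square root lies in $\Q(\zeta_n)$.} Since $\mathcal{P}_2$ contains the factor $[\Delta_1\mid n \text{ or } \Delta_2\mid n]$, we may assume $\sqrt{c}\in\Q(\zeta_n)$ or $\sqrt{c/\Delta_K}\in\Q(\zeta_n)$, using \cite[Lemma 3]{Schinzel70}. Both cannot hold, since otherwise $\sqrt{\Delta_K}\in\Q(\zeta_n)$, contradicting $\Delta_K\nmid n$. On $\Q(\zeta_n)$, the automorphism $\sigma_0$ restricts to complex conjugation, since it inverts $\zeta_n$. Also $\sigma_0(\sqrt{\Delta_K})=-\sqrt{\Delta_K}$.

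\emph{Step 3: case analysis on the sign of $c$.} Recall $\Delta_K>0$.
\begin{enumerate}
    \item If $\Delta_1\mid n$, then $\sigma_0(\sqrt{c})=\overline{\sqrt{c}}$, which is $\sqrt c$ when $c>0$ and $-\sqrt c$ when $c<0$. Writing $\sqrt{\Delta_K/c}=\sqrt{\Delta_K}/\sqrt{c}$, we get $\sigma_0(\sqrt{\Delta_K/c})=-\sqrt{\Delta_K}/\sigma_0(\sqrt c)$.
    \begin{itemize}
        \item For $c<0$ this gives $\sigma_0(\delta)=\delta^{-1}$.
        \item For $c>0$ it gives $\sigma_0(\delta)=\sqrt c-\frac v2\sqrt{\Delta_K/c}=-\delta^{-1}\neq\delta^{-1}$.
    \end{itemize}
    \item If $\Delta_2\mid n$, then $\sqrt{\Delta_K/c}=1/\sqrt{c/\Delta_K}$ is fixed by $\sigma_0$ exactly when $c>0$ (real case) and negated when $c<0$. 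Since $\sqrt c=\sqrt{\Delta_K}\sqrt{c/\Delta_K}$, the automorphism $\sigma_0$ negates $\sqrt c$ when $c>0$ and fixes it when $c<0$.
    \begin{itemize}
        \item For $c>0$ this gives $\sigma_0(\delta)=\delta^{-1}$.
        \item For $c<0$ it gives $\sigma_0(\delta)=-\delta^{-1}$.
    \end{itemize}
\end{enumerate}
Combined with the factor $[2h_2\mid d]$ in \eqref{equation: definition of P_2(n,d)}, this yields the stated formula. Here $v\neq0$ and $c\neq0$, so $\delta^{-1}\neq-\delta^{-1}$.

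The main obstacle is bookkeeping of the square-root branches. The identity for $\delta^{-1}$ must use the same determinations of $\sqrt c$ and $\sqrt{\Delta_K/c}$ as $\delta$, and the relations $\sqrt{\Delta_K/c}=\sqrt{\Delta_K}/\sqrt c$ and $\sqrt c=\sqrt{\Delta_K}\sqrt{c/\Delta_K}$ must be fixed consistently. We would therefore fix $\sqrt{\Delta_K}>0$ and one root of $c$ at the outset, and define the other roots from these.
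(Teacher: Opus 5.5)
Your proposal is correct and follows essentially the same route as the paper's proof. The paper also writes $\gamma^{-1/2h_2}=-\sqrt{c}+\frac{v}{2}\sqrt{\Delta_K/c}$, reduces the condition to $\sigma_0$ negating $\sqrt{c}$ while fixing $\sqrt{c/\Delta_K}$, notes that $\Delta_1$ and $\Delta_2$ cannot both divide $n$, and uses that $\sigma_0$ restricts to complex conjugation on $\Q(\zeta_n)$ in the cases $\Delta_1\mid n$ and $\Delta_2\mid n$; your version adds the explicit check $c-\frac{v^2\Delta_K}{4c}=-1$ and shows that the wrong sign of $c$ gives $-\gamma^{-1/2h_2}$.
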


\begin{proof}
    By the proof of \cite[Lemma 4.2]{Sa2022}, we know $\sigma_0\in \Gal(K(\zeta_n)/K)$ because $\Delta_K\nmid n$. Moreover, we only  work on the Boolean variable $[\sigma_0(\gamma^{1/2h_2}) = \gamma^{-1/2h_2}]$ that appears in $\mathcal{P}_2(n,d)$. Thus, we may assume that $2h_2\mid d$ and one of $\Delta_1$ or $\Delta_2$ divides $n$. The latter ensures the existence of a square root of $\gamma^{1/h_2}$ in $K(\zeta_n)$. Note that $[\Delta_1,\Delta_2] \nmid n$, otherwise $\Delta_K$ would divide $n$. From \eqref{equation: sqrt(r)}, we find
    \[
    \gamma^{-1/2h_1} = -\sqrt{c} + \frac{v}{2}\sqrt{\frac{\Delta_K}{c}}.
    \]
    By comparing them, we see that it suffices to find conditions to have $\sigma_0(\sqrt{c}) = -\sqrt{c}$ and $\sigma_0(\sqrt{c/\Delta_K}) = \sqrt{c/\Delta_K}$.

    Assume that $\Delta_1 \mid n$. By definition, we have $\sqrt{c} \in \Q(\zeta_n)$. Since $\sigma_0|_{\Q(\zeta_n)}$ is the complex conjugation, for $\sqrt{c}$ to be sent to $-\sqrt{c}$, we need $c<0$. We obtain
    \[
    \sigma_0\left(\sqrt{\frac{c}{\Delta_K}} \right)  = \sqrt{\frac{c}{\Delta_K}},
    \]
    since $\sigma_0(\sqrt{\Delta_K}) = -\sqrt{\Delta_K}$. Hence $\sigma_0(\gamma^{1/2h_2}) = \gamma^{-1/2h_2}$. The case  $\Delta_2\mid n$ proceeds similarly, with $c/\Delta_K$ instead of $c$.
\end{proof}

\begin{theorem}\label{theorem: delta-(d) with Q=1}
    Assume $\mathcal{Q}=1$, $\Delta_K>0$, and $2\mid d$. Define
    \[
    e = \frac{\Delta_K}{(d,\Delta_K)} \quad \text{and} \quad e_i = \frac{|\Delta_i|}{(d,|\Delta_i|)},
    \]
    for all $1\leq i \leq 2$. Then,
    \[
    \delta_\gamma^-(d) = \frac{S_{d,1,h} -S_{d,e,h}}{2} + (-1)^{[c>0]} \cdot\frac{S_{d,e_1,h}(2h_2) -S_{d,e_2,h}(2h_2)}{2},
    \]
    where $c$ is defined in \eqref{equation: sqrt(r)}.
\end{theorem}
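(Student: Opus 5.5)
We follow the same template as in the proofs of Theorems \ref{theorem: delta-(d) with Q=0} and \ref{theorem: delta+(d) with Q=1}. The plan is to express each summand of $\delta_\gamma^-(d)$ through the degree formula of Corollary \ref{corollary: degree of Kummer extensions} and the existence criterion for $\sigma_{u,v}$. We then linearize the resulting Boolean weight into indicator functions of the form $[e'\mid v]$ and $[2h_2\mid uv]$, so that $\delta_\gamma^-(d)$ becomes a linear combination of the sums $S_{d,e',h}(\nu)$ of Lemma \ref{lemma: from sum to product}.

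\textbf{Step 1: the weight of each summand.} With $n=dv$ and exponent $uv$, Corollary \ref{corollary: degree of Kummer extensions} gives
\[
[K_{dv,uv}:\Q]=\frac{\varphi(dv)uv}{(uv,h)\,t}\cdot 2^{[e\nmid v]},
\qquad
t=1+[2h_2\mid uv]\,[\Delta_1\mid dv \text{ or } \Delta_2\mid dv],
\]
using Lemma \ref{lemma: square root in cyclotomic extensions} as in Theorem \ref{theorem: delta+(d) with Q=1}. Since $\mathcal{Q}=1$, we use the reformulated criterion (1)--(2) stated before Lemma \ref{lemma: P_2(n,d) other form}. It says that $\sigma_{u,v}$ exists if and only if $e\nmid v$ and $\mathcal{P}_1(dv,uv)+\mathcal{P}_2(dv,uv)\ge 1$, where $\mathcal{P}_1$ and $\mathcal{P}_2$ are mutually exclusive. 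So the weight to analyse is
\[
W(u,v)=\frac{t\,[e\nmid v]\,\big(\mathcal{P}_1+\mathcal{P}_2\big)}{2^{[e\nmid v]}}=\frac{t}{2}\,[e\nmid v]\,\big(\mathcal{P}_1+\mathcal{P}_2\big).
\]

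\textbf{Step 2: simplify under $e\nmid v$.} If $2h_2\nmid uv$, then $t=1$ and $\mathcal{P}_1=1$, so the weight is $\tfrac12[e\nmid v]$.

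Now suppose $2h_2\mid uv$. Since $[\Delta_1,\Delta_2]=[\Delta_K,\Delta_i]$ and $\Delta_K\nmid dv$, at most one of $\Delta_1,\Delta_2$ divides $dv$. If neither divides $dv$, then $t=1$ and $\mathcal{P}_1=1$, giving weight $\tfrac12$. If $\Delta_i\mid dv$, then $t=2$ and $\mathcal{P}_1=0$. By Lemma \ref{lemma: P_2(n,d) other form}, $\mathcal{P}_2=1$ exactly when $i=1$ and $c<0$, or $i=2$ and $c>0$; the weight is then $1$ or $0$ accordingly.

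Writing $\Delta_i\mid dv\iff e_i\mid v$, the weight for $2h_2\mid uv$ is
\[
\tfrac12\big(1-[e_1\mid v]-[e_2\mid v]\big)+[\text{correct }i].
\]
When $c<0$ this equals $\tfrac12\big(1+[e_1\mid v]-[e_2\mid v]\big)$, and when $c>0$ it equals $\tfrac12\big(1-[e_1\mid v]+[e_2\mid v]\big)$. Hence in all cases
\[
W=\tfrac12[e\nmid v]\Big(1+(-1)^{[c>0]}[2h_2\mid uv]\big([e_1\mid v]-[e_2\mid v]\big)\Big).
\]

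\textbf{Step 3: remove the factor $[e\nmid v]$.} The key observation is that $e_i\mid v$ already forces $e\nmid v$. Indeed, if $e\mid v$ and $e_1\mid v$, then $[\Delta_K,\Delta_1]\mid dv$, so $\Delta_2\mid dv$ as well, and hence all three of $\Delta_K,\Delta_1,\Delta_2$ divide $dv$. This gives $\sqrt{c}\in\Q(\zeta_{dv})$ with $c$ of either sign, which is fine in itself. However, I expect that the same data also yields $\sqrt{\Delta_K}\in\Q(\zeta_{dv})$, contradicting $e\nmid v$. This is the step to verify carefully: the claim to establish is that $[e\nmid v][e_i\mid v]=[e_i\mid v]$. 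If it fails, I would instead keep correction terms $[[e,e_i]\mid v]$ and check that they cancel between $i=1$ and $i=2$. This cancellation holds because $[e,e_1]=[e,e_2]$, so those corrections enter with opposite signs. Either way we obtain
\[
W=\frac{1-[e\mid v]}{2}+(-1)^{[c>0]}\,\frac{[2h_2\mid uv]\,[e_1\mid v]-[2h_2\mid uv]\,[e_2\mid v]}{2}.
\]

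\textbf{Step 4: conclude.} Multiplying $W$ by $\mu(u)(uv,h)/(\varphi(dv)uv)$ and summing over $u\mid d$ and $v\mid d^\infty$ gives exactly
\[
\delta_\gamma^-(d)=\frac{S_{d,1,h}-S_{d,e,h}}{2}+(-1)^{[c>0]}\,\frac{S_{d,e_1,h}(2h_2)-S_{d,e_2,h}(2h_2)}{2},
\]
by the definition \eqref{equation: definition of S_d,e,h(nu)}. This is the claimed formula.

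The main obstacle is the case bookkeeping in Steps 2--3: checking that $\mathcal{P}_1$ and $\mathcal{P}_2$ never overlap, and that the interaction between $[e\nmid v]$ and $[e_i\mid v]$ produces no leftover terms.
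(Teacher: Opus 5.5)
Your argument is correct and follows the same route as the paper: you compute the weight $t\,2^{-[e\nmid v]}[\sigma_{u,v}\text{ exists}]$ from Corollary~\ref{corollary: degree of Kummer extensions}, the $\mathcal{P}_1/\mathcal{P}_2$ criterion and Lemma~\ref{lemma: P_2(n,d) other form}, linearize it, and read off sums $S_{d,e',h}(2h_2)$. The only difference is organizational. You merge the $\mathcal{P}_1$ and $\mathcal{P}_2$ contributions into one pointwise weight before linearizing, whereas the paper writes $\delta_\gamma^-(d)=S_1(d)+S_2(d)$ and linearizes each piece separately.

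The ``key observation'' in Step 3 is false, however, and should be deleted: $e\mid v$ and $e_i\mid v$ can hold at the same time. For example, if $\Delta_K\Delta_1\Delta_2\mid d$, then $e=e_1=e_2=1$, and your identity $[e\nmid v][e_i\mid v]=[e_i\mid v]$ would read $0=1$ for every $v$. Your attempted contradiction is also circular, because $\sqrt{\Delta_K}\in\Q(\zeta_{dv})$ is just a restatement of $e\mid v$.

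What actually makes Step 3 work is your fallback:
\[
[e\nmid v]\big([e_1\mid v]-[e_2\mid v]\big)=[e_1\mid v]-[e_2\mid v]-[[e,e_1]\mid v]+[[e,e_2]\mid v].
\]
The last two terms cancel because $[e,e_i]\mid v$ holds if and only if $[\Delta_K,\Delta_i]\mid dv$, and $[\Delta_K,\Delta_1]=[\Delta_K,\Delta_2]$. This is exactly the cancellation in the paper, between the term $+S_{d,[e_1,e_2],h}(2h_2)$ from $S_1(d)$ and the term $-S_{d,[e,e_1],h}(2h_2)$ from $S_2(d)$. You should present this cancellation as the argument itself rather than as a contingency.
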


\begin{proof}
    Let $\nu = 2h_2$. By \eqref{equation: definition of P_1(n,d)} and \eqref{equation: definition of P_2(n,d)}, we have $\big(\mathcal{P}_1(dv,uv) + \mathcal{P}_2(dv,uv) \big)\cdot [\Delta_K\nmid dv] $ equals $1$ if and only if $\sigma_{u,v}$ exists. We may write $\delta_\gamma^-(d) = S_1(d) + S_2(d)$, where
    \[
    S_i(d) =  \sum_{v\mid d^\infty} \sum_{u\mid d} \frac{\mu(u)\mathcal{P}_i(dv,uv)}{[K_{dv,uv}:\Q]} \cdot [ \Delta_K \nmid dv],
    \]
    for all $i\in\{1,2\}$. We start with $S_1(d)$. Assuming $\mathcal{P}_1(dv,uv)=1$ and $\Delta_K\nmid dv$, and using Corollary \ref{corollary: degree of Kummer extensions}, we obtain a general term for $S_1(d)$ of 
    \[
    \frac{\mu(u)(uv,h)}{2\varphi(dv)uv}  \cdot \mathcal{P}_1(dv,uv)\cdot [e\nmid v].
    \]
    We used $\Delta_K \mid dv$ if and only if $e\mid v$, which is also valid for $(e_1,\Delta_1)$ and $(e_2,\Delta_2)$. Next, we may write $\mathcal{P}_1(dv,uv) = 1-[2h_1 \mid uv \text{ and } \exists i\in \{1,2\}, \ \Delta_i \mid dv]$ and, because of the equalities $[e,e_1]=[e,e_2]=[e_1,e_2]$, we find that
    \[
    \mathcal{P}_1(dv,uv) \cdot [e\nmid v] = \big(1 - [2h_2\mid uv]\cdot [e_1\mid v] - [2h_2\mid uv]\cdot [e_2\mid v]\big) \cdot [e\nmid v].
    \]
    We can now use $[e\nmid v] = 1-[e\mid v]$ to obtain
    \[
    S_1(d) = \frac{1}{2}\bigg( S_{d,1,h} - S_{d,e,h} -S_{d,e_1,h}(2h_2) -S_{d,e_1,h}(2h_2) \bigg) + S_{d,[e_1,e_2],h}(2h_2).
    \]
    We now turn our attention to $S_2(d)$. By Lemma \ref{lemma: P_2(n,d) other form}, we have
    \[
    \mathcal{P}_2(n,d)\cdot [\Delta_K\nmid dv] = [2h_2\mid uv] \cdot [\Delta_K\nmid dv]\cdot
    \begin{cases}
        [\Delta_1 \mid dv ], & \text{if $c<0$}; \\
        [\Delta_2 \mid dv ], & \text{if $c>0$},
    \end{cases}
    \]
    If $c<0$, then
    \[
    S_2(d) = \sum_{\subalign{v &\mid d^\infty \\ e_1 &\mid v}} \sum_{u\mid d} \frac{\mu(u)(uv,h) t}{2\varphi(dv)uv} \cdot [2h_2\mid uv]\cdot[e\nmid v] = S_{d,e_1,h}(\nu) -S_{d,[e,e_1],h}(\nu),
    \]
    using Corollary \ref{corollary: degree of Kummer extensions} for the field degrees and Lemma \ref{lemma: square root in cyclotomic extensions} to show that $t=2$, where $t$ is defined in the corollary. Going back to $\delta_\gamma^-(d) = S_1(d)+S_2(d)$, we obtain
    \[
    \delta_\gamma^-(d) = \frac{1}{2} \bigg(S_{d,1,h} -S_{d,e,h} -S_{d,e_1,h}(\nu) +S_{d,e_2,h}(\nu)\bigg),
    \]
    where we used $[e,e_1]=[e_1,e_2]$. If $c>0$, then $S_2(d) = S_{d,e_2,h}(\nu) - S_{d,[e,e_2],h}(\nu)$, and the result follows similarly.
\end{proof}

\subsection{The case \texorpdfstring{$\Delta_K=-4$}{TEXT}}

We now deal with the case of $K=\Q(i)$. Similar to Subsection \ref{subsection: the case Q=1}, we have to check if $\gamma^{1/h_2}$ is a square in cyclotomic extensions $K(\zeta_n)$ or not. Because $N_{K/\Q}(\gamma^{1/h_2}) = 1$, it is a square if and only if $\Delta_1 \mid n$ or $\Delta_2 \mid n$, where the $\Delta_i$'s were defined in the previous subsection. However, when it is a square, we also have to check if $\gamma^{1/h_2}$ is a $4$-th power in $K(\zeta_n)$. By Lemma \ref{lemma: fourth root in cyclotomic fields}, if we consider $\mathfrak{f}(L)$, the conductor of $L:=\Q(i,\gamma^{1/4h_2})$, then $\gamma^{1/4h_2} \in K(\zeta_n)$ if and only if $\mathfrak{f}(L)\mid [4,n]$.

\begin{theorem}\label{theorem: delta(d) with Delta=-4}
    Assume that $\Delta_K=-4$ and $2\mid d$. Define $e=4/(d,4)$,
    \[
    f = \frac{\mathfrak{f}(L)}{(d,\mathfrak{f}(L))} \quad \text{and} \quad e_i = \frac{|\Delta_i|}{(d,|\Delta_i|)},
    \]
    for all $1\leq i\leq 2$. Then,
    \[
    \delta_\gamma(d) = S_{d,1,h}+S_{d,e,h}+S_{d,e_1,h}(2h_2)+S_{d,e_2,h}(2h_2)+4S_{d,f,h}(4h_2).
    \]
\end{theorem}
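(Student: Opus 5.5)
Since $\Delta_K=-4<0$, Lemma \ref{lemma: existence of sigma_u,v when D<0} gives $[\sigma_{u,v}\text{ exists}]=1$ for all $u,v$, so $\delta_\gamma(d)=2\delta_\gamma^+(d)$. The plan is to express $\delta_\gamma^+(d)$ as a linear combination of sums $S_{d,\cdot,h}(\nu)$ by linearizing the factor $t\cdot 2^{-[4\nmid dv]}$ coming from Corollary \ref{corollary: degree of Kummer extensions}, exactly as in the proof of Theorem \ref{theorem: delta+(d) with Q=1}. Since $2\mid d$, every $v\mid d^\infty$ gives $2\mid dv$, so $4\mid dv$ if and only if $e\mid v$ with $e=4/(d,4)$. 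This yields the contribution $\tfrac12(1+[e\mid v])$, i.e. $\tfrac12(S_{d,1,h}+S_{d,e,h})$ from the part $t=1$. (Here $\mathcal{Q}=1$ automatically, because norms from $\Q(i)$ are positive.)

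Next I determine $t$ from Theorem \ref{theorem: minimal polynomial when h=h(1)} with $n=dv$ and exponent $uv$. Since $\#\mu(K)=4$, we have $t\in\{1,2,4\}$. By Lemma \ref{lemma: square root in cyclotomic extensions}, $t\geq 2$ exactly when $2h_2\mid uv$ and ($\Delta_1\mid dv$ or $\Delta_2\mid dv$). By Lemma \ref{lemma: fourth root in cyclotomic fields}, $t=4$ exactly when $4h_2\mid uv$ and $\mathfrak f(L)\mid[4,dv]=dv$. Here $\gamma^{1/4h_2}\in K(\zeta_n)$ forces the square root to exist as well, so it suffices to check the two divisibility conditions. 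Writing
\[
t=1+[2h_2\mid uv]\,[\Delta_1\mid dv\text{ or }\Delta_2\mid dv]+2[4h_2\mid uv]\,[\mathfrak f(L)\mid dv],
\]
I multiply by $2^{-[4\nmid dv]}$. The key simplifications are the following. The relation $[\Delta_1,\Delta_2]=[\Delta_K,\Delta_i]$ means that if both $\Delta_1$ and $\Delta_2$ divide $dv$, then $4\mid dv$. So inclusion–exclusion on the ``or'' combined with the factor $2^{-[4\nmid dv]}$ gives $\tfrac12([e_1\mid v]+[e_2\mid v])$, as in Theorem \ref{theorem: delta+(d) with Q=1}. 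For the last term, $4\mid\mathfrak f(L)$ by Lemma \ref{lemma: fourth root in cyclotomic fields}, so $\mathfrak f(L)\mid dv$ already implies $4\mid dv$, and the factor is $1$. The term $2[4h_2\mid uv][f\mid v]$ then contributes $2S_{d,f,h}(4h_2)$ to $\delta^+_\gamma$. Here I use the fact that $\mathfrak f(L)\mid dv$ if and only if $f\mid v$, which holds since $v\mid d^\infty$ and the sums vanish otherwise (Remark \ref{remark: S_d,e,h = 0}).

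Collecting terms gives $\delta^+_\gamma(d)=\tfrac12(S_{d,1,h}+S_{d,e,h}+S_{d,e_1,h}(2h_2)+S_{d,e_2,h}(2h_2))+2S_{d,f,h}(4h_2)$, and doubling yields the claim. The step needing the most care is the case analysis of $t$. First, one must check that $t=4$ occurs exactly under the stated conditions and not via a $-4x^4$ exception; Theorem \ref{theorem: minimal polynomial when h=h(1)} already handles the irreducibility. Second, one must confirm that $\gamma^{1/h_2}$ being a fourth power in $K(\zeta_n)$ is governed by $\mathfrak f(L)\mid[4,n]$. This relies on $L$ being abelian with $K\subset L$, so that $L\subset K(\zeta_n)=\Q(\zeta_{[4,n]})$ if and only if $\mathfrak f(L)\mid[4,n]$. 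Lemma \ref{lemma: fourth root in cyclotomic fields} applies to $\gamma^{1/h_2}$ because it has norm $1$ and is not a square in $K$ by maximality of $h=h(1)$.
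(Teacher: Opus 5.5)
Your proposal is correct and follows the paper's proof. Both use $\delta_\gamma=2\delta_\gamma^+$ and the linearization $t=1+[2h_2\mid uv][\exists i,\ \Delta_i\mid dv]+2[4h_2\mid uv][\mathfrak{f}(L)\mid dv]$, combined with the factor $(1+[4\mid dv])/2$ from Corollary \ref{corollary: degree of Kummer extensions}; you also usefully spell out how that factor interacts with each term (via $[\Delta_K,\Delta_1]=[\Delta_1,\Delta_2]$ and $4\mid\mathfrak{f}(L)$), which the paper leaves implicit. The only unstated step is $[4,dv]=dv$ in the $t=4$ case, which follows at once from $4\mid 4h_2\mid uv\mid dv$ (the paper argues instead that $u$ squarefree forces $2\mid v$).
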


\begin{proof}
    The proof is similar to those of Theorems \ref{theorem: delta+(d) with Q=0}, \ref{theorem: delta-(d) with Q=0}, \ref{theorem: delta+(d) with Q=1}, and \ref{theorem: delta-(d) with Q=1}, thus we only give the main steps. First, we linearize $t$ defined in Theorem \ref{theorem: minimal polynomial when h=h(1)} as
    \[
    t = 1 + [2h_2\mid uv]\cdot [\exists i, \Delta_i\mid dv] + 2 [4h_2 \mid uv]\cdot [\mathfrak{f}(L)\mid dv]
    \]
    for the field $K_{dv,uv}$. Note that we used $4h_2\mid uv$ and $u$ being square-free, so that $2\mid v$. Hence $4\mid dv$ and $[4,dv]=dv$. Secondly, by Corollary \ref{corollary: degree of Kummer extensions}, we have
    \[
    \frac{1}{[K_{dv,uv} :\Q]} = \frac{(uv,h)t}{\varphi(dv)uv} \cdot \frac{1}{2^{[4\nmid dv]}} = \frac{(uv,h)t}{\varphi(dv)uv} \cdot  \frac{1+[4\mid dv]}{2}.
    \]
    Finally, replacing this expression in $\delta_\gamma(d)$, we can linearize $t$ and write $\delta_\gamma(d)$ in terms of sums $S_{d,e,h}(\nu)$. Also, note that we use $\Delta_i \mid dv$ if and only if $e_i\mid v$, and the same holds for the pairs $(4,e)$ and $(\mathfrak{f}(L),f)$, and $\delta_\gamma^+(d) = \delta_\gamma^-(d)$ by Lemma \ref{lemma: existence of sigma_u,v when D<0}.
\end{proof}

\subsection{The case \texorpdfstring{$\Delta_K=-3$}{TEXT}}

This is the last case for $h=h(1)$. As before, we check whether $\gamma^{1/h_2}$ has a square root in $K(\zeta_n)$ or not, using the discriminants $\Delta_1$ and $\Delta_2$, which we defined as in Subsection \ref{subsection: the case Q=1}. In addition, we have to check whether $\gamma^{1/h_3}$ is a cube in $K(\zeta_n)$ or not. Let us write $\gamma^{1/h_3} = u+v\sqrt{-3}$ for some $u,v\in \Q^\times$, and let $L$ be the splitting field of $X^3-3X-2u$. If $\mathfrak{f}(L)$ is the conductor of $L$, then $\gamma^{1/h_3} \in (K(\zeta_n)^\times)^3$ if and only if $\mathfrak{f}(L) \mid n$, by Lemma \ref{lemma: cube root in Q(zeta_3,zeta_n)}.

\begin{theorem}\label{theorem: delta(d) with Delta=-3}
    Assume that $\Delta_K=-3$ and $(d,6)>1$. Define
    \[
    f = \frac{\mathfrak{f}(L)}{(d,\mathfrak{f}(L))} \quad \text{and} \quad \Tilde{e} = \min\left(\frac{|\Delta_i|}{(d,|\Delta_i|)}  : 1\leq i\leq 2\right).
    \]
    Then, we have
    \[
    \delta_\gamma(d) = 2^{[3\mid d]} \bigg(S_{d,1,h} + S_{d,\Tilde{e},h}(2h_2) + 2S_{d,f,h}(3h_3) + 2S_{d,[\Tilde{e},f],h}(6h_6) \bigg).
    \]
\end{theorem}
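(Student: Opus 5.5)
The plan follows the same pattern as the proof of Theorem \ref{theorem: delta(d) with Delta=-4}. Since $\Delta_K=-3<0$, Lemma \ref{lemma: existence of sigma_u,v when D<0} gives $\delta_\gamma(d)=2\delta_\gamma^+(d)$. It therefore suffices to express $1/[K_{dv,uv}:\Q]$ through Corollary \ref{corollary: degree of Kummer extensions} and then linearize both the factor $t$ and the factor $2^{-[3\nmid dv]}$. Because $\mu(K)=\mu_6$, the integer $t$ of Theorem \ref{theorem: minimal polynomial when h=h(1)} is the largest $m\mid 6$ with $mh_m\mid uv$ and $\gamma^{1/mh_m}\in K(\zeta_{dv})$. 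Here $h_6=h_2h_3$, so the condition for $m=6$ is exactly the conjunction of the conditions for $m=2$ and $m=3$. Hence $t$ is multiplicative in its $2$-part and $3$-part:
\[
t=\big(1+[2h_2\mid uv][\gamma^{1/2h_2}\in K(\zeta_{dv})]\big)\big(1+2[3h_3\mid uv][\gamma^{1/3h_3}\in K(\zeta_{dv})]\big).
\]
The second factor is controlled by Lemma \ref{lemma: cube root in Q(zeta_3,zeta_n)}: $\gamma^{1/3h_3}\in K(\zeta_{dv})$ if and only if $\mathfrak f(L)\mid dv$, that is, $f\mid v$. For the first factor, Lemma \ref{lemma: square root in cyclotomic extensions} says the square root exists if and only if $\Delta_1\mid dv$ or $\Delta_2\mid dv$.

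Next comes the factor $2^{-[3\nmid dv]}=(1+[3\mid dv])/2$. If $3\mid d$, this factor is always $1$, which, together with the doubling $\delta_\gamma=2\delta_\gamma^+$, should produce the prefactor $2^{[3\mid d]}$. If $3\nmid d$, then $v\mid d^\infty$ is prime to $3$, so $3\nmid dv$ and the factor is $1/2$; this cancels the doubling and gives the prefactor $1$. In that case I also expect $f\nmid d^\infty$ unless $\mathfrak f(L)$ is prime to $3$, and Remark \ref{remark: S_d,e,h = 0} handles the vanishing terms automatically.

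For the square-root condition, the relation $[\Delta_1,\Delta_2]=[\Delta_K,\Delta_1]$ will be used. Since $K=\Q(\zeta_3)$, we have $\{|\Delta_1|,|\Delta_2|\}=\{m,3m\}$ or similar, so one discriminant divides the other up to the factor $3$. I will check that the condition ``$\Delta_1\mid dv$ or $\Delta_2\mid dv$'' collapses to the single condition $\tilde e\mid v$ with $\tilde e$ the minimum, at least for the relevant $v$. This is the step I expect to be the main obstacle: one must verify, using the structure $K_2=\Q(\sqrt{c/(-3)})$, that the smaller of the two normalized quantities really controls the disjunction, which may require $3\mid d$ or an argument that $3\mid dv$ is automatic when the larger one divides.

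Finally, I expand the product for $t$ into the four terms $1$, $[2h_2\mid uv][\tilde e\mid v]$, $2[3h_3\mid uv][f\mid v]$ and $2[6h_6\mid uv][[\tilde e,f]\mid v]$, using $[2h_2\mid uv][3h_3\mid uv]=[6h_6\mid uv]$. Summing against $\mu(u)(uv,h)/(\varphi(dv)uv)$ then gives $S_{d,1,h}$, $S_{d,\tilde e,h}(2h_2)$, $2S_{d,f,h}(3h_3)$ and $2S_{d,[\tilde e,f],h}(6h_6)$, all in the notation of Lemma \ref{lemma: from sum to product}. Multiplying by the prefactor computed above yields the stated formula.
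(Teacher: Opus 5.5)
Your proposal is correct and follows the paper's proof essentially step for step: $\delta_\gamma=2\delta_\gamma^+$, the factorization of $t$ into its $2$-part and $3$-part, the translation of the root conditions into $\tilde e\mid v$ and $f\mid v$, and the factor $2^{-[3\nmid dv]}$ producing the prefactor $2^{[3\mid d]}$ (you get this exponent right, whereas the paper's displayed degree has the typo $2^{[3\mid dv]}$). The step you flag as the main obstacle is immediate: since $\{|\Delta_1|,|\Delta_2|\}=\{m,3m\}$, the disjunction is just $m\mid dv$, i.e.\ $m/(d,m)\mid v$, and $3m/(d,3m)\geq m/(d,m)$ because $(d,3m)\le 3(d,m)$, so this is exactly the condition $\tilde e\mid v$.
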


\begin{proof}
    As in the proof of Theorem \ref{theorem: delta(d) with Delta=-4}, we may skip a few details. Let $t$ be defined as in Theorem \ref{theorem: minimal polynomial when h=h(1)} applied to the field $K_{dv,uv}$. Then, we have
    \[
    t = \big(1+[2h_2\mid uv]\cdot [\exists i, \Delta_i\mid dv] \big)\cdot \big(1+2\cdot [3h_3\mid uv]\cdot[\mathfrak{f}(L) \mid dv]\big),
    \]
    Moreover, the fields $K_1=\Q(\sqrt{c})$ and $K_2=\Q(\sqrt{-3c})$ have discriminants $\Delta_1$ and $\Delta_2$, which are equal up to a factor of $3$. If $3\nmid d$, then it means only one of $\Delta_1$ or $\Delta_2$ may divide $dv$. If $3\mid d$, then $\Delta_1 \mid dv$ if and only if $\Delta_2 \mid dv$. In both cases, we obtain
    \[
    [\exists i, \Delta_i \mid dv] = [\Tilde{e} \mid v],
    \]
    which simplifies the expression of $t$. Next, by Corollary \ref{corollary: degree of Kummer extensions}, we have
    \[
    [K_{dv,uv}:\Q] = \frac{\varphi(dv)uv}{(uv,h)t}\cdot 2^{[3\mid dv]} =  \frac{\varphi(dv)uv}{(uv,h)t}\cdot 2^{[3\mid d]},
    \]
    because $v\mid d^\infty$. Finally, replacing this degree in the expression of $\delta_\gamma^+(d)$, linearizing $t$, and using Lemma \ref{lemma: existence of sigma_u,v when D<0} for the equality $\delta_\gamma^+(d)=\delta_\gamma^-(d)$, we can write $\delta_\gamma(d)$ as the linear combination of the sums $S_{d,e,h}(\nu)$ given in the statement.
\end{proof}

\section{Remaining Cases}\label{section: remaining cases}

Our goal is to provide a way to write $\delta_\gamma(d)$ in closed-form when $h\ne h(1)$. We first take a quick look at the case $h=h(-1)$. Then, we study the cases $h=h(\zeta)$ for $\zeta\in \mu(K)$ different from $\pm 1$. This may only happen when $\Delta_K=-4$ or $\Delta_K=-3$. The next formula allows us to write $\delta_\gamma(d)$ solely in terms of $\delta_{-\gamma}(\cdot)$, which can be written in closed-form by Section \ref{section: h=h(1)}. This is done via a formula used by Wiertelak \cite{WiertelakIV} and Moree \cite{Mor2005} that links $\ord_\pi(\gamma)$ and $\ord_\pi(-\gamma)$. We write $\zeta\gamma = \gamma_0^h$ for some $\gamma_0\in K$.

\begin{theorem}\label{theorem: switch with -1}
    For every $d\geq 2$, we have
    \[
    \delta_\gamma(d) = 
    \begin{cases}
        \delta_{-\gamma}(2d)+\delta_{-\gamma}(d/2)-\delta_{-\gamma}(d), & \text{if $v_2(d)=1$}; \\
        \delta_{-\gamma}(d), & \text{otherwise}.
    \end{cases}
    \]
    In particular, if $h=h(-1)$, then the values for $\delta_{-\gamma}$ can be computed using Theorems \ref{theorem: delta+(d) with Q=0}, \ref{theorem: delta-(d) with Q=0}, \ref{theorem: delta+(d) with Q=1}, \ref{theorem: delta-(d) with Q=1}, \ref{theorem: delta(d) with Delta=-4}, or \ref{theorem: delta(d) with Delta=-3}.
\end{theorem}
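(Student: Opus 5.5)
We will work prime by prime, relating $\ord_\pi(\gamma)$ to $\ord_\pi(-\gamma)$ and then passing to densities via Theorem \ref{theorem: existence of the density}, which guarantees that all densities involved exist. Fix a prime $p\nmid 2a_2\Delta$ and a prime $\pi$ of $\mathcal{O}_K$ above $p$. Write $m=\ord_\pi(-\gamma)$. Since $-1$ has order exactly $2$ modulo $\pi$ ($p$ odd), the formula used by Wiertelak and Moree is
\[
\ord_\pi(\gamma)=\begin{cases} 2m, & \text{if } m \text{ is odd};\\ m/2, & \text{if } v_2(m)=1;\\ m, & \text{if } 4\mid m.\end{cases}
\]
To check it, note that $\gamma^k=(-1)^k(-\gamma)^k$. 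If $m$ is odd, then $\gamma^m=-1$ and $\gamma^{2m}=1$. If $v_2(m)=1$, then $\gamma^{m/2}=(-1)^{m/2}(-\gamma)^{m/2}=(-1)(-1)=1$, because $(-\gamma)^{m/2}=-1$. If $4\mid m$, a short check shows the order is exactly $m$. In all cases the odd part of the order is unchanged, and only the $2$-adic valuation moves: $0\mapsto 1$, $1\mapsto 0$, and $k\mapsto k$ for $k\ge 2$. Because $\gamma$ and $-\gamma$ have the same orders at all primes above $p$ (by \cite[Lemma 4.1]{Sa2022}), membership of $p$ in $\mathcal{R}_\gamma(d)$ depends only on $m$.

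Write $d=2^kd'$ with $d'$ odd. If $k=0$, then $d\mid\ord_\pi(\gamma)$ if and only if $d\mid m$, since only the $2$-part changes. If $k\ge 2$, then $2^k\mid \ord_\pi(\gamma)$ if and only if $v_2(m)\geq k$, because the permutation above fixes every valuation $\ge 2$. So in both cases $\mathcal{R}_\gamma(d)=\mathcal{R}_{-\gamma}(d)$ up to finitely many primes, and the densities agree.

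If $k=1$, then $d\mid\ord_\pi(\gamma)$ if and only if $d'\mid m$ and $v_2(m)\in\{0\}\cup[2,\infty)$. The set of $p$ with $d'\mid m$ and $v_2(m)=0$ is $\mathcal{R}_{-\gamma}(d/2)\setminus\mathcal{R}_{-\gamma}(d)$. The set with $d'\mid m$ and $v_2(m)\geq 2$ is $\mathcal{R}_{-\gamma}(2d)$. These two sets are disjoint and $\mathcal{R}_{-\gamma}(d)\subseteq\mathcal{R}_{-\gamma}(d/2)$, so the densities give $\delta_\gamma(d)=\delta_{-\gamma}(2d)+\delta_{-\gamma}(d/2)-\delta_{-\gamma}(d)$.

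For the final assertion, $-\gamma$ is again the root quotient of a Lucas-type polynomial. Concretely, replacing $a_1$ by $a_1\sqrt{-1}$ is not allowed, but $-\gamma=a/(-b)$ is the root quotient of $X^2-(a-b)X-ab$, whose coefficients $a-b$ and $ab$ are not rational in general. The cleaner route is to note that Theorem \ref{theorem: existence of the density} and all of Section \ref{section: h=h(1)} only use the element $\gamma\in K$: it is not a root of unity and satisfies $\sigma_K(\gamma)=\gamma^{-1}$, and both properties hold for $-\gamma$. Set $h'(\zeta):=h(-\zeta)$, the value of $h$ computed for $-\gamma$ instead of $\gamma$. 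The hypothesis $h=h(-1)$ then says exactly that $-\gamma$ satisfies the condition $h=h(1)$ of Section \ref{section: h=h(1)}, so the cited theorems apply to $\delta_{-\gamma}$.

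The main obstacle is this last point: the sets $\mathcal{R}_{-\gamma}$ must be legitimate. I would handle it by checking that the proof of Theorem \ref{theorem: existence of the density} (Sanna's argument) is phrased purely in terms of $\ord_\pi$ of a norm-one non-torsion element of $K$. The rest is the elementary order computation above.
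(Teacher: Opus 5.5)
Your argument is correct and is essentially the paper's. It uses the same relation between $\ord_\pi(\gamma)$ and $\ord_\pi(-\gamma)$ (you index it by $v_2(\ord_\pi(-\gamma))$ instead of $v_2(\ord_\pi(\gamma))$, which is equivalent because the valuations swap $0\leftrightarrow 1$ and fix everything $\geq 2$), the same identity $\mathcal{R}_\gamma(d)=\mathcal{R}_{-\gamma}(d)$ for $v_2(d)\neq 1$, and the same disjoint splitting into $\mathcal{R}_{-\gamma}(2d)$ and $\mathcal{R}_{-\gamma}(d/2)\setminus\mathcal{R}_{-\gamma}(d)$ when $v_2(d)=1$.

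Your concern about whether $\mathcal{R}_{-\gamma}$ is legitimate, which the paper passes over silently, can be settled more directly than you suggest. The numbers $a\sqrt{\Delta}$ and $-b\sqrt{\Delta}$ are conjugate algebraic integers with sum $(a-b)\sqrt{\Delta}=\pm\Delta$ and product $-a_2\Delta$. Hence $-\gamma$ is the root quotient of the genuine Lucas sequence $U(\pm\Delta,-a_2\Delta)$, whose discriminant is $a_1^2\Delta$, so its splitting field is again $K$, and all earlier results apply verbatim.
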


\begin{proof}
    Let $p \in \mathcal{R}_\gamma(d)$ and $\pi$ be a prime ideal of $\mathcal{O}_K$ lying above $p$. We have
    \[
    \ord_\pi(\gamma) =
    \begin{cases}
        \ord_\pi(-\gamma)/2, & \text{if $2\nmid \ord_\pi(\gamma)$}; \\
        2\ord_\pi(-\gamma), & \text{if $v_2( \ord_\pi(\gamma))=1$}; \\
        \ord_\pi(-\gamma), & \text{if $4\mid \ord_\pi(\gamma)$}.
    \end{cases}
    \] 
    We see that $\ord_\pi(\gamma) = \mathrm{ord}_\pi(-\gamma)$ if $4\mid d$ and $d\mid \ord_\pi(\gamma)$. Moreover, if $2\nmid d$, then $d\mid \ord_\pi(\gamma)$ if and only if $d\mid \mathrm{ord}_\pi(-\gamma)$.  Hence $\mathcal{R}_\gamma(d) = \mathcal{R}_{-\gamma}(d)$ and the result follows if $v_2(d)\ne 1$. Assume $v_2(d)=1$. Then, $\mathcal{R}_\gamma(d)$ is the union of the sets
    \[
    A_1= \{p \text{ prime} : p\nmid a_2\Delta \text{ and } 2d\mid \ord_\pi(\gamma)\},
    \]
    and $A_2= \mathcal{R}_\gamma(d)\setminus A_1$. Any $p\in A_1$ satisfies $4\mid \ord_\pi(\gamma)$, so that $\ord_\pi(\gamma)=\mathrm{ord}_\pi(-\gamma)$. Thus, we have $A_1=\mathcal{R}_{-\gamma}(2d)$. For $A_2$, we see that any $p$ in this set satisfies
    \[
    d\mid \ord_\pi(\gamma) \quad \text{and} \quad 2d\nmid \ord_\pi(\gamma).
    \]
    It follows that $\ord_\pi(\gamma) = 2\mathrm{ord}_\pi(-\gamma)$, and that $p\in A_2$ if and only if $d \mid 2\mathrm{ord}_\pi(-\gamma) $ and $ d\nmid \mathrm{ord}_\pi(-\gamma)$. We find that $A_2 = \mathcal{R}_{-\gamma}(d/2)\setminus \mathcal{R}_{-\gamma}(d)$. The result follows by taking the natural density of these sets, which exists by Sanna's theorem \cite{Sa2022}.
\end{proof}

Note that these switching formulas between $\gamma$ and $-\gamma$ given in Theorem \ref{theorem: switch with -1} remain valid for $\delta_\gamma^+$ and $\delta_\gamma^-$, and that the proof is the same. We also use the following:

\begin{lemma}\label{lemma: remaining cases degree}
    Assume that $\Delta_K\in\{-4,-3\}$. Let $p\mid \Delta_K$ be prime and $d\geq 2$, $p\nmid d$. Then, we have
    \[
    [K_{p^{k+j}dv,uv}:\Q] = [K_{dv,uv}:\Q] \cdot 
    \begin{cases}
        3^{k+j-1}, & \text{if $p=3$}; \\
        2^{k+j-2}, & \text{if $p=2$ and $k+j\geq 2$}; \\
        1, & \text{if $p=2$ and $(k,j) = (1,0)$},
    \end{cases}
    \]
    for all $u\mid d$, $v\mid d^\infty$, $k\geq 1$, and $j\geq 0$.
\end{lemma}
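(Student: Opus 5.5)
Write $n=dv$ and $m=p^{k+j}$. Since $u\mid d$ and $v\mid d^\infty$ with $p\nmid d$, the integers $m$ and $n$ are coprime and $uv$ is coprime to $p$. The plan is to compare $K_{mn,uv}=K(\zeta_{mn},\gamma^{1/uv})$ with $K_{n,uv}$ through the tower
\[
K\subset K(\zeta_n)\subset K_{n,uv}\subset K_{mn,uv}=K_{n,uv}(\zeta_m).
\]
This gives $[K_{mn,uv}:\Q]=[K_{n,uv}:\Q]\cdot[K_{n,uv}(\zeta_m):K_{n,uv}]$, so everything reduces to computing the degree of $\zeta_m$ over $K_{n,uv}$.

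\emph{Degree of the cyclotomic part.} Since $K=\Q(\zeta_p)$ when $p=3$, or $K=\Q(\zeta_4)$ when $p=2$, we have $[K(\zeta_m):K]=\varphi(m)/2=3^{k+j-1}$ for $p=3$, and $\varphi(m)/2=2^{k+j-2}$ for $p=2$ with $k+j\ge 2$. When $(k,j)=(1,0)$ and $p=2$, we get $\zeta_2=-1\in K$, so the degree is $1$. It therefore suffices to show that $K(\zeta_m)$ and $K_{n,uv}$ are linearly disjoint over $K$, i.e. $K(\zeta_m)\cap K_{n,uv}=K$.

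\emph{Linear disjointness.} First, $K(\zeta_m)\cap K(\zeta_n)=K$. Indeed, both fields are inside $\Q(\zeta_{[\,|\Delta_K|,mn]})$, and by coprimality, $\Q(\zeta_{m'})\cap\Q(\zeta_{n'})=\Q$ for coprime $m',n'$. Since $K\subset \Q(\zeta_{p^{a}})$ with $p\nmid n$, the intersection of $\Q(\zeta_{p^{a}},\zeta_m)=\Q(\zeta_{\max})$ with $\Q(\zeta_{p^a},\zeta_n)$ is $\Q(\zeta_{p^a})=K$ (or contained in it, once adjusted by degree counting). Next, the extension $K_{n,uv}/K(\zeta_n)$ is a Kummer extension of degree dividing $uv$, which is coprime to $p$. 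Let $E=K(\zeta_m)\cap K_{n,uv}$. Then $E$ is Galois over $K$ (both fields are Galois over $\Q$), and $[E:K]$ divides $[K(\zeta_m):K]$, a power of $p$.

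The main obstacle is ruling out that $E$ sits inside $K_{n,uv}$ without lying inside $K(\zeta_n)$. I would handle it as follows. Consider $E(\zeta_n)\subset K_{n,uv}$. Its degree over $K(\zeta_n)$ divides $[K_{n,uv}:K(\zeta_n)]$, which divides $uv$ and is prime to $p$. It also equals $[E:E\cap K(\zeta_n)]=[E:K]$, which is a power of $p$, because $E/K$ is Galois and $E\cap K(\zeta_n)\subset K(\zeta_m)\cap K(\zeta_n)=K$. Hence $[E:K]=1$.

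Multiplying the degrees then gives the stated formula in all three cases.
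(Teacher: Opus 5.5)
Your proof is correct and follows the paper's route. Like the paper, you write $K_{p^{k+j}dv,uv}$ as the compositum $K_{dv,uv}(\zeta_{p^{k+j}})$, invoke linear disjointness over $K$, and read off $[K(\zeta_{p^{k+j}}):K]$. The paper only asserts the disjointness, whereas you justify it: you show $K(\zeta_m)\cap K(\zeta_n)=K$, and then observe that $[E:K]$ is a power of $p$ dividing the Kummer degree, which divides $uv$ and is prime to $p$. The intersection step would read more cleanly stated directly as $\Q(\zeta_a)\cap\Q(\zeta_b)=\Q(\zeta_{(a,b)})$.
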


\begin{proof}
    It suffices to see that $K_{p^{k+j}dv,uv}$ is the compositum of $K_{dv,uv}$ and $K(\zeta_{p^{k+j}})$, which are linearly disjoint over $K$. Then, we use the formula
    \[
    [K_{p^{k+j}dv,uv}:\Q] = [K_{dv,uv}: \Q] \cdot [K(\zeta_{p^{k+j}}) : K],
    \]
    for composita, and compute the degree of the cyclotomic extension $K(\zeta_{p^{k+j}})/K$.
\end{proof}

\subsection{The case \texorpdfstring{$h\ne h(1)$ and $\Delta_K=-4$}{TEXT}}

We may assume that $h=h(i) \ne h(\pm1)$. Indeed, Theorem \ref{theorem: switch with -1} already addresses the case $h=h(-1)$. Moreover, it allows us to switch between $\delta_\gamma$ and $\delta_{-\gamma}$. Thus, we may assume $h=h(i)$ in the following. Note that $2\mid h$ because $h\ne h(\pm 1)$.

\begin{theorem}\label{theorem: Delta=-4 et h=h(i)}
    Assume $\Delta_K = -4$ and $h =h(i)\ne h(\pm1)$. Let $d,k\geq 1$ be integers, $2\nmid d$, and $\Delta_1$ and $\mathfrak{f}(L)$ be as defined in Lemmas \ref{lemma: square root in cyclotomic extensions} and \ref{lemma: fourth root in cyclotomic fields} for $\gamma_0^{h/h_2}$. Then,
    \[
    \delta_\gamma(2^kd) = \delta_{i\gamma}(d) \cdot 
    \begin{cases}
        \displaystyle 1- \frac{2^k}{3\cdot 2^{m+2}h_2}, & \text{if $k\in\{1,2\}$};\\[0.7em]

        \displaystyle \frac{8}{3\cdot 2^{k+m}h_2}, & \text{if $k\geq 3$},
    \end{cases}
    \]
    where $m=[\Delta_1 \mid 8d] + [\mathfrak{f}(L) \mid 16d]$ and $\delta_{i\gamma}(d)$ is computed in \cite[Theorem 1.1]{Sa2022}.
\end{theorem}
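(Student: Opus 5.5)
Starting from \eqref{equation: definition of delta_gamma(d)} with $d$ replaced by $2^kd$, I would split the double sum by $2$-parts. Write $u=2^a u'$ with $a\in\{0,1\}$, $u'\mid d$, and $v=2^b v'$ with $b\ge 0$, $v'\mid d^\infty$. By Lemma \ref{lemma: existence of sigma_u,v when D<0} the Boolean always equals $1$, so
\[
\delta_\gamma(2^kd)=2\sum_{b\ge 0}\sum_{a\in\{0,1\}}(-1)^a\sum_{v'\mid d^\infty}\sum_{u'\mid d}\frac{\mu(u')}{[K_{2^{k+b}dv',2^{a+b}u'v'}:\Q]}.
\]
The whole proof rests on separating $[K_{2^{k+b}dv',2^{a+b}u'v'}:\Q]$ into an odd factor and a $2$-power factor, in the form
\[
[K_{dv',u'v'}:\Q]\cdot[K(\zeta_{2^{k+b}}):K]\cdot\frac{2^{a+b}}{c_{a,b}},
\]
where $c_{a,b}$ measures how far $\gamma^{1/2^{a+b}}$ fails to generate a degree-$2^{a+b}$ extension of $K_{2^{k+b}dv',u'v'}$. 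The odd-level tower is then exactly the one defining $\delta_{i\gamma}(d)$. Since $d$ is odd, $\gamma^{1/u'v'}$ and $(i\gamma)^{1/u'v'}$ generate the same field over $K(\zeta_{dv'})$ up to an odd root of unity, and $i^{1/u'v'}\in K(\zeta_{4dv'})$ because $i$ is a $u'v'$-th power of some $4$th root of unity. Lemma \ref{lemma: remaining cases degree} then gives $[K(\zeta_{2^{k+b}}):K]=2^{k+b-2}$ for $k+b\ge2$, and $1$ when $k+b=1$.

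For the $2$-part I would use $\gamma=i^{-1}\gamma_0^h$ with $h_2\ge2$. For $2^{a+b}\mid h_2$, the element $\gamma^{1/2^{a+b}}$ is $\gamma_0^{h/2^{a+b}}$ times a root of $i^{-1}$, which lies in $K(\zeta_{2^{a+b+2}})\subseteq K(\zeta_{2^{k+b}})$ once $k\ge2$. Beyond $h_2$, the extension is a Kummer extension of $\gamma_0^{h/h_2}$, controlled by Lemma \ref{lemma: square root in cyclotomic extensions} at level $2h_2$ (via $\Delta_1$) and Lemma \ref{lemma: fourth root in cyclotomic fields} at level $4h_2$ (via $\mathfrak{f}(L)$). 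The conditions $\Delta_1\mid 2^{k+b}dv'$ and $\mathfrak{f}(L)\mid 2^{k+b}dv'$ involve only the $2$-part, since $v'$ is already absorbed by $d^\infty$. For the large $b$ that actually contribute, these conditions stabilise to $\Delta_1\mid 8d$ and $\mathfrak{f}(L)\mid16d$; this is where $m$ comes from. I would adapt the argument of Theorem \ref{theorem: minimal polynomial when h=h(1)} to $\gamma_0^{h}=i\gamma$, replacing $\gamma_0$ by the relevant product with powers of $\zeta_8$ or $\zeta_{16}$, and I expect $c_{a,b}=2^{\min(a+b,\,v_2(h)+m)}$ up to small corrections at $b\le 2$.

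With this factorisation in hand, the inner double sum over $(u',v')$ factors out and equals $\tfrac12\delta_{i\gamma}(d)$, using $\delta_{i\gamma}=2\delta^+_{i\gamma}$ from Lemma \ref{lemma: existence of sigma_u,v when D<0}. What remains is an explicit geometric series in $b$ with the two terms $a=0,1$. For $b\ge \log_2 h_2+m$ the terms telescope to a series of ratio $1/4$, producing the factor $\tfrac{8}{3\cdot2^{k+m}h_2}$ when $k\ge3$. For $k\in\{1,2\}$, the small $b$ terms, where $2^{a+b}\mid 2h_2$ and the extension has degree only $[K(\zeta_{2^{k+b}}):K]$, contribute the ``$1$'', and the tail gives the subtraction $\tfrac{2^k}{3\cdot2^{m+2}h_2}$.

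I expect the main obstacle to be the exact $2$-power degree $c_{a,b}$ at small levels, specifically for $k\in\{1,2\}$ and $b\le2$. There the presence of $i$ in $\gamma$ interacts with $\zeta_{2^{k+b}}$ not yet containing $\zeta_8$, and the irreducibility exception $-4(K^\times)^4$ of Theorem \ref{theorem: irreducibility of X^n-a} can intervene. I would handle these finitely many cases by hand, computing directly which of $\pm\gamma,\pm i\gamma$ become squares or fourth powers in $K(\zeta_{2^{k+b}})$. I would then check that the resulting piecewise sum collapses to the stated closed form.
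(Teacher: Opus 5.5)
Your overall strategy is the paper's. You split off the $2$-parts and factor the degree as $[K_{dv',u'v'}:\Q]\cdot[K(\zeta_{2^{k+b}}):K]\cdot A_{b,k,a}$, where $A_{b,k,a}=[K_{2^{k+b}dv',2^{a+b}}:K(\zeta_{2^{k+b}dv'})]$ is independent of $u',v'$. You then pull out $\tfrac12\delta_{i\gamma}(d)$ and sum a geometric series in $b$. For $k\geq 3$ your prediction $A_{b,k,a}=2^{\max(0,a+b-N)}$, with $N=v_2(h)+m$, is correct and yields $8/(3\cdot 2^{k+m}h_2)$.

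The gap is in the case $k\in\{1,2\}$. The inclusion $K(\zeta_{2^{a+b+2}})\subseteq K(\zeta_{2^{k+b}})$ that you invoke "once $k\geq 2$" actually requires $k\geq a+2$. It therefore fails for $(k,a)=(2,1)$, and for $k=1$ with both values of $a$. Since $(i^{-1})^{1/2^{a+b}}$ is a primitive $2^{a+b+2}$-th root of unity, the correct values are:
\begin{itemize}
\item $A_{b,k,a}=2^{a+2-k}$ for every $b$ with $k+b\geq 2$ and $a+b\leq N$;
\item for $k=1$, the tail is $A_{b,1,a}=\max(2^{a+1},2^{a+b-N})$, not $2^{a+b-N}$;
\item at $k=1$, $b=0$, one has $A_{0,1,0}=1$ and $A_{0,1,1}=2$.
\end{itemize}
This is not a correction confined to $b\leq 2$, because $N$ is unbounded as $h$ varies.

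These are exactly the terms that produce the leading $1$. For $k=2$, every $b\leq N$ contributes $2^{-b}(1-\tfrac12)=2^{-b-1}$. For $k=1$, every $b\leq N+1$ contributes $2^{-b-1}$. These partial sums give $1-2^{-N-1}$, resp.\ $1-2^{-N-2}$, and the tails add $2^{-N-1}/3$, resp.\ $2^{-N-2}/3$, which gives the stated closed forms. Your description is internally inconsistent on this point. If, for those $b$, the $a=0$ and $a=1$ extensions both had $2$-part degree only $[K(\zeta_{2^{k+b}}):K]$, as you assert, the sign $\mu(2^a u')$ would make their terms cancel, and they would contribute $0$, not $1$. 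Likewise, using your $c_{a,b}=2^{\min(a+b,N)}$ for $b\geq 3$ gives a wrong constant as soon as $N\geq 4$.

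What is missing is the computation of $A_{b,k,a}$ for $k\in\{1,2\}$ over the whole range $b\leq N+1$, which the paper supplies. Only the cases $k+b\leq 3$ need to be done by hand, namely those where the conditions $\Delta_1\mid 2^{k+b}d$ and $\mathfrak f(L)\mid 2^{k+b}d$ have not yet stabilised.
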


\begin{proof}
    We give the main steps of the proof. First, by Lemma \ref{lemma: existence of sigma_u,v when D<0}, we can write
    \begin{equation}\label{equation: delta(2^kd) as four sums}
        \delta_\gamma(2^kd) = \sum_{j\geq 0} \sum_{v\mid d^\infty} \sum_{0\leq l \leq 1} \sum_{u\mid d} \frac{2\mu(2^lu)}{[K_{2^{k+j}dv,2^{j+l}uv}:\Q]}.
    \end{equation}
    Then, by Kummer theory, we see that $K_{2^{k+j}dv,uv}$ and $K_{2^{k+j}dv,2^{j+l}}$ are linearly disjoint over $K(\zeta_{2^{k+j}dv})$. By the formula for the degree of composita, we have
    \[
    [K_{2^{k+j}dv,2^{j+l}uv}:\Q] = [K_{2^{k+j}dv,uv}:\Q] \cdot [K_{2^{k+j}dv,2^{j+l}}:K(\zeta_{2^{k+j}dv})].
    \]
    The degree of $K_{2^{k+j}dv,uv}/\Q $ is computed in Lemma \ref{lemma: remaining cases degree}. To compute the second degree, we need to find the largest $n\geq 1$ such that $\gamma_0^{h} =x^{2^n}$ in $K(\zeta_{2^{k+j}dv})$. The degree is easily computed when $k+j\leq 3$. If $k+j\geq 4$, then $\gamma_0^{h/h_2}$ is a square, respectively a $4$-th power, in $K(\zeta_{2^{k+j}dv})$ if and only if $\Delta_1\mid 2^{k+j}dv$, respectively $\mathfrak{f}(L) \mid 2^{k+j}dv$, by Lemmas \ref{lemma: square root in cyclotomic extensions} and \ref{lemma: fourth root in cyclotomic fields}. Because $\Delta_1$ and $\mathfrak{f}(L)$ are square-free except possibly for a factor $2^n$, where $n\leq 3$ for $\Delta_1$ and $n\leq 4$ for $\mathfrak{f}(L)$, these conditions become $\Delta_1\mid 8d$ and $\mathfrak{f}(L)\mid 16d$. Hence the definition of $m$. If $k+j\geq 4$, we have
    \[
    [K_{2^{k+j}dv,2^{j+l}}:K(\zeta_{2^{k+j}dv})] = 
    \begin{cases}
        2^{l+2-k}, & \text{if $k\in\{1,2\}$}; \\
        1, & \text{if $k\geq 3$},
    \end{cases}
    \]
    when $j+l \leq v_2(2^mh)$. Otherwise, when $j+l>v_2(2^mh)$, we have
    \[
    [K_{2^{k+j}dv,2^{j+l}}:K(\zeta_{2^{k+j}dv})] = 
    \begin{cases}
        [2^{l+1}, 2^{j+l-v_2(2^mh)}], & \text{if $k=1$}; \\
        2^{j+l-v_2(2^mh)}, & \text{if $k\geq 2$}.
    \end{cases}
    \]
    In the latter case, we first used \cite[Lemma 4]{Perucca2015} with the fields $K(\zeta_{2^{k+j}dv}, (-i)^{1/2^{j+l}})$ and $K(\zeta_{2^{k+j}dv}, \gamma_0^{h_0/2^{j+l}})$, which are linearly disjoint over $K(\zeta_{2^{k+j}dv})$. Then, we used Theorem \ref{theorem: irreducibility of X^n-a} to compute the degree of $K(\zeta_{2^{k+j}dv}, \gamma_0^{h_0/2^{j+l}})$. In particular, we find that the degree $A_{j,k,l} := [K_{2^{k+j}dv,2^{j+l}}:K(\zeta_{2^{k+j}dv})]$ is independent of $u$ and $v$. Using Lemma \ref{lemma: remaining cases degree}, we obtain
    \[
    \delta_\gamma(2^kd) = \delta_\gamma(d) \cdot \sum_{j\geq 0} \bigg( \frac{1}{A_{j,k,0}} - \frac{1}{A_{j,k,1}}\bigg)  \cdot 
    \begin{cases}
        1, & \text{if $k=1$ and $j=0$};\\
        2^{2-j-k}, & \text{if $k+j\geq 2$}.
    \end{cases}
    \]
    We have now obtained $\delta_\gamma(d)$ multiplied by a geometric series, the computation of which yields the claimed formula. 
    
    Finally, note that $K(\zeta_{dv}, \gamma^{1/uv})$ and $K({\zeta_{dv}}, \gamma_0^{h/uv})$ are equal for all $u\mid d$ and $v\mid d^\infty$. Indeed, since $d$ is odd, we have $(-i)^{1/uv}=\pm i$. Therefore, $\gamma^{1/uv} = \pm i\gamma_0^{h/uv}$ and $ \delta_\gamma(d) = \delta_{i\gamma}(d)$ for all odd $d\geq 1$, which is computed in \cite[Theorem 1.1]{Sa2022}.
\end{proof}

\subsection{The case \texorpdfstring{$h\ne h(1)$ and $\Delta_K=-3$}{TEXT}}

We compute the Dirichlet density of $\mathcal{R}_\gamma(d)$ when $h\ne h(1)$ and $\Delta_K=-3$. We may assume that $h=h(\zeta_3) \ne h(\pm1)$. The case $h=h(-1)$ follows from Theorem \ref{theorem: switch with -1}, which in addition, allows us to switch between $\delta_\gamma$ and $\delta_{-\gamma}$. In particular, when $h=h(\zeta_6)$ or $h=h(\zeta_6^{-1})$, we can switch to $-\gamma$, for which $h=h(\zeta_3)$ or $h=h(\zeta_3^2)$ respectively. The proofs for $h=h(\zeta_3)$ and $h=h(\zeta_3^2)$ are identical, so we present them for $h=h(\omega)$, where $\omega$ is a primitive $3$-rd root of unity. Note that $h\ne h(\pm1)$ implies $3\mid h$.

\begin{theorem}
    Assume $\Delta_K = -3$ and $h =h(\omega)\ne h(\pm1)$. Let $d,k\geq 1$ be integers, $3\nmid d$, and $\mathfrak{f}(L)$ be as defined in Lemma \ref{lemma: cube root in Q(zeta_3,zeta_n)} for $\gamma_0^{h/h_3}$. Then,
    \[
    \delta_\gamma(3^kd) = \delta_{\omega\gamma}(d) \cdot 
    \begin{cases}
        \displaystyle 1-\frac{1}{4\cdot 3^{m}h_3}, & \text{if $k=1$}; \\[0.8em]

        \displaystyle\frac{9}{4\cdot 3^{k+m} h_3}, & \text{if $k\geq 2$},
    \end{cases}
    \]
    where $m=[\mathfrak{f}(L) \mid 9d]$ and $\delta_{\omega\gamma}(d)$ is computed in \cite[Theorem 1.1]{Sa2022} if $2\nmid d$ and in Theorem \ref{theorem: delta(d) with Delta=-3} otherwise.
\end{theorem}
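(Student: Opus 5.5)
The proof mirrors that of Theorem \ref{theorem: Delta=-4 et h=h(i)}, with the prime $2$ replaced by $3$ and $i$ by $\omega$. Since $\Delta_K<0$, Lemma \ref{lemma: existence of sigma_u,v when D<0} gives $\delta_\gamma=2\delta_\gamma^+$. I will split $v=3^jv'$ with $v'\mid d^\infty$, and $u=3^lu'$ with $l\in\{0,1\}$ and $u'\mid d$. This writes
\[
\delta_\gamma(3^kd)=\sum_{j\geq 0}\sum_{v\mid d^\infty}\sum_{0\leq l\leq 1}\sum_{u\mid d}\frac{2\mu(3^lu)}{[K_{3^{k+j}dv,3^{j+l}uv}:\Q]}.
\]
Since $3\nmid uv$, Kummer theory shows that $K_{3^{k+j}dv,uv}$ and $K(\zeta_{3^{k+j}dv},\gamma^{1/3^{j+l}})$ are linearly disjoint over $K(\zeta_{3^{k+j}dv})$. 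The degree therefore factors as $[K_{3^{k+j}dv,uv}:\Q]\cdot A_{j,k,l}$, where $A_{j,k,l}:=[K(\zeta_{3^{k+j}dv},\gamma^{1/3^{j+l}}):K(\zeta_{3^{k+j}dv})]$.

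The first factor is handled by Lemma \ref{lemma: remaining cases degree}: it equals $3^{k+j-1}[K_{dv,uv}:\Q]$. The second factor is the crux, and I must show it is independent of $u$ and $v$. I write $\gamma=\omega^{-1}\gamma_0^h$ with $3\mid h$. In $K(\zeta_{3^{k+j}dv})$ the root of unity $\omega^{-1}$ becomes a $3^{k+j-1}$-th power, since $\zeta_{3^{k+j}}$ is present. The $3$-power divisibility of $\gamma_0^{h}$ in $K(\zeta_{3^{k+j}dv})$ is $3^{v_3(h)}$, enlarged by one further factor of $3$ exactly when $\gamma_0^{h/h_3}$ is a cube there. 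By Lemma \ref{lemma: cube root in Q(zeta_3,zeta_n)}, that happens iff $\mathfrak{f}(L)\mid 3^{k+j}dv$. Because $\mathfrak{f}(L)=3^ap_1\cdots p_s$ with $a\le 2$, and we may take $k+j\geq 2$ or treat small cases by hand, this condition reduces to $\mathfrak{f}(L)\mid 9d$, independent of $v$; this is the source of $m$.

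I then combine the two contributions via \cite[Lemma 4]{Perucca2015} applied to the disjoint fields generated by $\omega^{-1/3^{j+l}}$ and $\gamma_0^{h/3^{j+l}}$, and use Theorem \ref{theorem: irreducibility of X^n-a} (the $-4$ clause is irrelevant for $p=3$). The result is $A_{j,k,l}=1$ when $j+l\leq v_3(3^mh)$ and $k\geq 2$. Otherwise $A_{j,k,l}$ is $3^{j+l-v_3(3^mh)}$, or an lcm with $3^{l}$ when $k=1$, because $\omega$ need not be a $9$-th root in the smaller field. This bookkeeping of the $k=1$ versus $k\geq2$ cases is the main obstacle: the root of unity and $\gamma_0^h$ interact, and the interaction must be tracked carefully.

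Once $A_{j,k,l}$ is independent of $u$ and $v$, the inner sums over $u$ and $v$ reproduce $\delta_\gamma(d)$, so $\delta_\gamma(3^kd)=\delta_\gamma(d)\sum_{j\ge0}3^{1-k-j}\bigl(A_{j,k,0}^{-1}-A_{j,k,1}^{-1}\bigr)$. This is a finite sum plus a geometric tail in $1/9$, and summing it should give $1-1/(4\cdot3^mh_3)$ for $k=1$ and $9/(4\cdot 3^{k+m}h_3)$ for $k\geq2$. Finally, because $3\nmid d$, any $uv$-th root of $\omega$ is again a cube root of unity lying in $K$. Hence $K(\zeta_{dv},\gamma^{1/uv})=K(\zeta_{dv},(\omega\gamma)^{1/uv})$ and $\delta_\gamma(d)=\delta_{\omega\gamma}(d)$. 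For $\omega\gamma$ one has $h=h(1)$, so this density is given by \cite[Theorem 1.1]{Sa2022} for odd $d$ and by Theorem \ref{theorem: delta(d) with Delta=-3} for even $d$.
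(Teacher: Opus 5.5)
Your proposal follows the paper's proof step for step. It uses the same four-fold decomposition of $\delta_\gamma(3^kd)$ and the same Kummer-theoretic splitting of the degree, with Lemma \ref{lemma: remaining cases degree} handling the first factor. It reduces the cube-root criterion to $m=[\mathfrak{f}(L)\mid 9d]$ and computes $A_{j,k,l}$ via \cite[Lemma 4]{Perucca2015} and Theorem \ref{theorem: irreducibility of X^n-a}, before summing the geometric series and identifying $\delta_\gamma(d)=\delta_{\omega\gamma}(d)$. Your hedged ``lcm with $3^l$'' for $k=1$ collapses to the paper's formula: $A_{j,1,l}=3^l$ if $j+l\le v_3(3^mh)$ and $3^{j+l-v_3(3^mh)}$ otherwise. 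With this, the sum does evaluate to the stated factors.
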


\begin{proof}
    The proof is almost identical to that of Theorem \ref{theorem: Delta=-4 et h=h(i)}, so we may skip a few details. First, we write $\delta_\gamma(3^kd)$ in the same way as \eqref{equation: delta(2^kd) as four sums} with the degree
    \[
    [K_{3^{k+j}dv, 3^{j+luv}}:\Q] = [K_{3^{k+j}dv, uv}:\Q] \cdot [K_{3^{k+j}dv, 3^{j+l}}:K(\zeta_{3^{k+j}dv})].
    \]
    The fields are linearly disjoint over $K(\zeta_{3^{k+j}dv})$ by Kummer theory. Hence the degree formula. The first degree is computed in Lemma \ref{lemma: remaining cases degree}. For the second, note that $\gamma_0^{h/h_3}$ is a cube root in $K(\zeta_{3^{k+j}dv})$ if and only if $\mathfrak{f}(L) \mid 3^{k+j}dv$ by Lemma \ref{lemma: cube root in Q(zeta_3,zeta_n)}. However, because $\mathfrak{f}(L)$ is square-free apart from a factor of $3^n$ ($n\leq 2$), we have  $\mathfrak{f}(L) \mid 3^{k+j}dv$ if and only if  $\mathfrak{f}(L) \mid 9d$ and $j+k\geq 2$. We obtained
    \[
    [K_{3^{k+j}dv, 3^{j+l}}:K(\zeta_{3^{k+j}dv})] = 
    \begin{cases}
        3^l, & \text{if $k=1$ and $j+l\leq v_3(3^mh)$}; \\
        1, & \text{if $k\geq 2$ and $j+l\leq v_3(3^mh)$}; \\
        3^{j+l-v_3(3^mh)}, & \text{if $j+l> v_3(3^mh)$},
    \end{cases}
    \]
    where we used that $K_{3^{k+j}dv, 3^{j+l}}$ is a cyclotomic field when $j+l\leq v_3(3^mh)$, and \cite[Lemma 4]{Perucca2015} and Theorem \ref{theorem: irreducibility of X^n-a} when $j+l>v_3(3^mh)$. The rest of the proof follows as Theorem \ref{theorem: Delta=-4 et h=h(i)}.
\end{proof}

\section{Numerical data}\label{section: numerical data}

In this last section, we present numerical demonstrations of our results. In each table, we write $\gamma$ in the form $u+v\sqrt{\Delta_K}$, where $u,v\in \Q^\times$. For each of them, we computed $h$ and $\zeta$ such that $\zeta\gamma = \gamma_0^{h}$ with $\gamma_0\in K$. We denote by $\Tilde{\gamma}$ the number $\zeta\gamma$. In Tables \ref{table: non-cyclotomic} and \ref{table: Q(i)}, we display the value of $\Tilde{\gamma}^{1/h_2}$ from which one can compute $c$, $K_1$, $K_2$, and their discriminants $\Delta_1$, $\Delta_2$.

In Table \ref{table: non-cyclotomic}, we display $\mathcal{Q}=[N_{K/\Q}(\Tilde{\gamma}^{1/h_2})]$. In Table \ref{table: Q(i)}, we give the value of $\mathfrak{f}(L)$, the conductor of $\Q(i,\Tilde{\gamma}^{1/4h_2})$. Similarly, we write explicitly the value of $\Tilde{\gamma}^{1/h_6}$ in Table \ref{table: Q(w)}, and give the value of $\mathfrak{f}(L)$, the conductor of $\Q(\omega,\Tilde{\gamma}^{1/3h_6})$. Note that we choose $\Tilde{\gamma}^{1/h_6}$ instead of writing $\Tilde{\gamma}^{1/h_2}$ and $\Tilde{\gamma}^{1/h_3}$ separately. This does not change the values $\Delta_1$ and $\Delta_2$, nor of the conductor $\mathfrak{f}(L)$, thanks to B\'ezout's identity.

Finally, for each value of $d$ in the tables, we computed $\delta_\gamma(d)$, which we display as a rational and in numerical form in the ``num.'' column, truncated to $6$ decimal places. In the ``exp.'' column, we provide the experimental value of $\delta_\gamma(d)$ computed via SageMath \cite{sagemath} programs. The programs used in this work are available in the author's GitHub repository \cite{CDC2026sageprograms}. We underline the digits of the ``exp.'' column that coincide with those in the ``num.'' column. We tested primes up to $10^7$.

\renewcommand{\arraystretch}{1.7}
\begin{table}[H]
    \centering
    \begin{tabular}{|c|c|c|c|c|c|c|c|c|}
         \hline
         $\gamma$ & $h$ & $\zeta$ & $\Tilde{\gamma}^{1/h_2}$ & $\mathcal{Q} $& $d$ & $\delta_\gamma(d)$ & num. & exp. \\
         \hline \hline
         $3+\sqrt{8}$ & $2$ & $1$ & $\frac{2+\sqrt{8}}{2}$  & $0$ & $6$ & 17/64 & 0.265625 & \underline{0.2656}70 \\
         \hline
         $ 3+\sqrt{8}$ & $2$ & $1$ & $ \frac{2+\sqrt{8}}{2}$  & $0$ & $20$ & 25/288 & 0.086805 & \underline{0.086}782 \\
         \hline
         $\frac{-27-5\sqrt{29}}{2}$ & $2$ & $-1$ & $\frac{5+\sqrt{29}}{2}$  & $0$ & $8$ & 1/6 & 0.166666 & \underline{0.166}473 \\
         \hline
         $\frac{-27-5\sqrt{29}}{2}$ & $2$ & $-1$ & $\frac{5+\sqrt{29}}{2}$  & $0$ & $10$ & 5/36 & 0.138888 & \underline{0.13}9166 \\
         \hline
         $\frac{17+7\sqrt{-15}}{32}$ & $4$ & $1$ & $\frac{1-\sqrt{-15}}{4}$  & $1$ & $10$ & 5/288 & 0.017361 & \underline{0.017}287 \\
         \hline
         $\frac{17+7\sqrt{-15}}{32}$ & $4$ & $1$ & $\frac{1-\sqrt{-15}}{4}$  & $1$ & $30$ & 5/384 & 0.013020 & \underline{0.0130}17 \\
         \hline
         
    \end{tabular}
    \caption{The case $\Delta_K\not \in \{-4,-3\}$.}
    \label{table: non-cyclotomic}
\end{table}

\begin{table}[H]
    \centering
    \begin{tabular}{|c|c|c|c|c|c|c|c|c|}
         \hline
         $\gamma$ & $h$ & $\zeta$ & $\Tilde{\gamma}^{1/h_2}$ & $\mathfrak{f}(L)$ & $d$ & $\delta_\gamma(d)$ & num. & exp. \\
         \hline \hline
         $\frac{-3+2\sqrt{-4}}{5}$ & $1$ & $1$ & $\frac{-3+2\sqrt{-4}}{5}$ & $20$ & $8$ & 1/3 & 0.333333 & \underline{0.333}427  \\
         \hline
         $\frac{-3+2\sqrt{-4}}{5}$ & $1$ & $1$ & $\frac{-3+2\sqrt{-4}}{5}$ & $20$ & $10$ & 5/72 & 0.069444 & \underline{0.069}279  \\
         \hline
         $\frac{48+7\sqrt{-4}}{50}$ & $2$ & $i$ & $\frac{3+2\sqrt{-4}}{5}$ & $40$ & $10$ & 235/1152 & 0.203993 & \underline{0.203}844  \\
         \hline
         $\frac{48+7\sqrt{-4}}{50}$ & $2$ & $i$ & $\frac{3+2\sqrt{-4}}{5}$ & $40$ & $24$ & 1/16 & 0.062500 & \underline{0.0625}53  \\
         \hline
         $\frac{-240-119\sqrt{-4}}{338}$ & $2$ & $i$ & $\frac{-24+5\sqrt{-4}}{26}$ & $208$ & $26$ & 661/8064 & 0.075768 & \underline{0.0757}71 \\
         \hline
         $\frac{-240-119\sqrt{-4}}{338}$ & $2$ & $i$ & $\frac{-24+5\sqrt{-4}}{26}$ & $208$ & $28$ & 35/288 & 0.121527 & \underline{0.121}457 \\
         \hline
    \end{tabular}
    \caption{The case $\Delta_K=-4$.}
    \label{table: Q(i)}
\end{table}

\renewcommand{\arraystretch}{1.5}
\begin{table}[H]
    \centering
    \begin{tabular}{|c|c|c|c|c|c|c|c|c|}
         \hline
         $\gamma$ & $h$ & $\zeta$ & $\Tilde{\gamma}^{1/h_6}$ & $\mathfrak{f}(L)$ & $d$ & $\delta_\gamma(d)$ & num. & exp. \\
         \hline \hline
         
         $\frac{-13+3\sqrt{-3}}{14}$ & $1$ & $1$  & $\frac{-13+3\sqrt{-3}}{14}$ & 7 & $3$ & 3/4 & 0.750000 & \underline{0.7500}61  \\
         \hline
         $\frac{-13+3\sqrt{-3}}{14}$ & $1$ & $1$  & $\frac{-13+3\sqrt{-3}}{14}$ & 7 & $14$ & 35/288 & 0.121527 & \underline{0.121}231   \\
         \hline

         $\frac{683+37\sqrt{-3}}{686}$ & $3$ & $\omega^2$ & $\frac{1+4\sqrt{-3}}{7}$ & 63 & $9$ & 1/12 & 0.083333 & \underline{0.083}407  \\
         \hline
         $\frac{683+37\sqrt{-3}}{686}$ & $3$ & $\omega^2$ & $\frac{1+4\sqrt{-3}}{7}$ & 63 & $42$ & 1225/10368 & 0.118152 & \underline{0.11}7806 \\
         \hline
         $\frac{1031-520\sqrt{-3}}{1369}$ & $2$ & $-1$  & $\frac{13+20\sqrt{-3}}{37}$ & 333 & $6$ & 5/8 & 0.625000 & \underline{0.62}4809 \\
         \hline
         $\frac{1031-520\sqrt{-3}}{1369}$ & $2$ & $-1$  & $\frac{13+20\sqrt{-3}}{37}$ & 333 & $111$ & 407/16416 & 0.024792 & \underline{0.024}823 \\
         \hline
         
    \end{tabular}
    \caption{The case $\Delta_K=-3$.}
    \label{table: Q(w)}
\end{table}

\section{Acknowledgements}

We are grateful to Carlo Sanna for sharing his computational method for the constant $h$, and to Christian Ballot for his guidance during the author's Ph.D. work. The results in Subsections \ref{subsection: the case Q=0} and \ref{subsection: the case Q=1} were obtained under the latter's supervision.

\bibliographystyle{abbrv}
\bibliography{ref}

\end{document}